\newtheorem{theorem}{Theorem}
\newtheorem{lemma}[theorem]{Lemma}
\newtheorem{proposition}[theorem]{Proposition}
\newtheorem{corollary}[theorem]{Corollary}
\newtheorem{definition}[theorem]{Definition}
\newtheorem{remark}[theorem]{Remark}
\theoremstyle{plain}
\begin{document}

\title[Energy Estimates for Fractional Evolution Equations]{Energy Estimates for Fractional Evolution Equations}


\author[P. M. Carvalho-Neto]{Paulo M. Carvalho-Neto}
\address[Paulo M. Carvalho-Neto]{Department of Mathematics, Federal University of Santa Catarina, Florian\'{o}polis - SC, Brazil\vspace*{0.3cm}}
\email[]{paulo.carvalho@ufsc.br}

\author[C. L. Frota]{C\'{i}cero L. Frota}
\address[C\'{i}cero L. Frota]{Department of Mathematics, State University of Maringá, Maring\'{a} - PR, Brazil\vspace*{0.3cm}}
\email[]{clfrota@uem.br}

\author[J. C. O. Ballesteros]{Juan C. Oyola Ballesteros}
\address[Juan C. Oyola Ballesteros]{Department of Mathematics, Federal University of Santa Catarina, Florian\'{o}polis - SC, Brazil\vspace*{0.3cm}}
\email[]{jcoyolaba@unal.edu.co}

\author[P. G. P. Torelli]{Pedro G. P. Torelli}
\address[Pedro G. P. Torelli]{Department of Mathematics, State University of Maringá, Maring\'{a} - PR, Brazil\vspace*{0.3cm}}
\email[]{pgptorelli@gmail.com}


\subjclass[2020]{26A33, 35R11, 35A15, 35K05}


\keywords{Fractional Caputo derivative, Fractional Riemann-Liouville derivative, Energy inequality, Weak solutions, Faedo-Galerkin method.}


\begin{abstract}
This work establishes a more applicable version of an energy inequality for weak solutions of evolution equations involving fractional time derivatives. Unlike the classical identity, $(d/dt)\|u(t)\|_H^2 = 2(u'(t), u(t))_H$, its fractional counterpart requires a regularity condition that is often difficult to verify in practice.

We revisit the inequality in the setting $H = L^2(\Omega)$, where $\Omega \subset \mathbb{R}^n$ is open, and present a new proof under the sole assumption that $u \in L^\infty(0,T;H)$. In our approach, the necessary regularity arises naturally during the argument, rather than being assumed a priori. This refinement extends the range of applicability of the inequality and provides a rigorous basis for using the Faedo–Galerkin method in fractional evolution problems. As an application, we establish the existence, uniqueness, and regularity of weak solutions to the fractional heat equation.
\end{abstract}

\maketitle

\section{Introduction} 

One of the most fundamental tools in the analysis of weak solutions to evolution equations is the identity
\begin{equation}\label{202505271005}
\dfrac{d}{dt}\|u(t)\|^2_H = 2(u'(t), u(t))_H,\quad \text{for a.e. } t \in [0, T],
\end{equation}
valid for functions $u \in W^{1,2}(0, T; H)$, where $H$ is a Hilbert space. This relation plays a central role in deriving energy estimates and a priori bounds in the classical theory of partial differential equations.

In particular, identity~\eqref{202505271005} is a key ingredient in the Faedo-Galerkin meth\-od, a classical technique for proving existence of weak solutions by approximating the original problem with a sequence of finite-dimensional ones. The energy identity enables the derivation of uniform bounds for the Galerkin approximations, which are instrumental in extracting weak and weak-* convergent subsequences.

\subsection{The Fractional Version of (\ref{202505271005}) and the Main Result}
When dealing with evolution equations involving Riemann–Liouville (i.e., $D_t^\alpha$) or Caputo (i.e., $cD_t^\alpha$) fractional derivatives of order $\alpha \in (0,1)$, it is well known that the identity \eqref{202505271005} no longer holds; see \cite{CarFe0} for a detailed discussion. Nevertheless, within the same work, the second author and R. Fehlberg Júnior established a fractional analogue of a generalized version of this identity (cf. \cite[Chapter 3, Lemma 1.2]{Te1}), under certain additional regularity assumptions:
\begin{theorem}[{\cite[Theorem 4.16]{CarFe0}}]\label{202507181644}
Let $H$ and $V$ be Hilbert spaces satisfying:
\begin{itemize}
    \item[(a)] $V$ is dense in $H$ and continuously embedded into it;
    \item[(b)] $H \equiv H'$ via the Riesz representation theorem, where $H'$ denotes the dual of $H$.
\end{itemize}

Under these conditions, if $V'$ denotes the dual of $V$, then we have the continuous embeddings
\[
V \subset H \equiv H' \subset V'.
\]

\begin{itemize}
\item[(i)] Suppose that $p > 1/(1-\alpha)$ and $p \geq 2$. If $u \in L^p(t_0,t_1;V)$ satisfies $D_{t_0,t}^\alpha u \in L^2(t_0,t_1;V')$ and $J_{t_0,t}^{1-\alpha}\|u(t)\|_H^2 \in W^{1,1}(t_0,t_1;\mathbb{R})$, then $u$ is almost everywhere equal to a continuous function from $[t_0,t_1]$ into $H$, and
\[
D_{t_0,t}^\alpha \|u(t)\|_H^2 \leq 2 \langle D_{t_0,t}^\alpha u(t), u(t) \rangle_{V',V}, \quad \text{for a.e. } t \in [t_0,t_1].
\]

\item[(ii)] If $u \in L^2(t_0,t_1;V)$, $cD_{t_0,t}^\alpha u \in L^2(t_0,t_1;V')$, and $J_{t_0,t}^{1-\alpha}\|u(t)\|_H^2 \in W^{1,1}(t_0,t_1;\mathbb{R})$, then $u$ is almost everywhere equal to a continuous function from $[t_0,t_1]$ into $H$, and
\[
cD_{t_0,t}^\alpha \|u(t)\|_H^2 \leq 2 \langle cD_{t_0,t}^\alpha u(t), u(t) \rangle_{V',V}, \quad \text{for a.e. } t \in [t_0,t_1].
\]
\end{itemize}
\end{theorem}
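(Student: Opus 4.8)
The plan is to reduce the inequality to a pointwise identity for smooth functions whose defect is manifestly nonnegative, and then to recover the stated conclusions by regularization in time. For $v$ continuously differentiable on $[t_0,t_1]$ with values in $V$, both fractional derivatives admit a Marchaud-type representation; with $c_\alpha=1/\Gamma(1-\alpha)$,
\[
D_{t_0,t}^\alpha v(t)=c_\alpha\left[\frac{v(t)}{(t-t_0)^\alpha}+\alpha\int_{t_0}^t\frac{v(t)-v(s)}{(t-s)^{1+\alpha}}\,ds\right],
\]
while $cD_{t_0,t}^\alpha v$ is obtained by replacing $v(t)$ with $v(t)-v(t_0)$ in the first term. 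Applying this to $v=u$ (paired in $H$ with $u(t)$) and to the scalar $v=\|u\|_H^2$, and invoking the polarization identity $2(u(t)-u(s),u(t))_H-(\|u(t)\|_H^2-\|u(s)\|_H^2)=\|u(t)-u(s)\|_H^2$ (and its value at $s=t_0$ for the Caputo boundary term), I obtain, for smooth $u$ and with $\langle D_{t_0,t}^\alpha u,u\rangle_{V',V}=(D_{t_0,t}^\alpha u,u)_H$,
\[
2\langle D_{t_0,t}^\alpha u,u\rangle_{V',V}-D_{t_0,t}^\alpha\|u\|_H^2=c_\alpha\left[\frac{\|u(t)\|_H^2}{(t-t_0)^\alpha}+\alpha\int_{t_0}^t\frac{\|u(t)-u(s)\|_H^2}{(t-s)^{1+\alpha}}\,ds\right]\ge 0,
\]
and the same with $\|u(t)\|_H^2$ replaced by $\|u(t)-u(t_0)\|_H^2$ in the Caputo case. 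Each right-hand side is a sum of nonnegative terms, which is precisely the sought inequality for smooth $u$.

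To reach a general $u$, I regularize in time to produce smooth $u_\varepsilon$ with $u_\varepsilon\to u$ in $L^2(t_0,t_1;V)$ and $D_{t_0,t}^\alpha u_\varepsilon\to D_{t_0,t}^\alpha u$ in $L^2(t_0,t_1;V')$. The elementary bound $\big|\|a\|_H^2-\|b\|_H^2\big|\le\|a-b\|_H(\|a\|_H+\|b\|_H)$ yields $\|u_\varepsilon\|_H^2\to\|u\|_H^2$ in $L^1$, and since $J_{t_0,t}^{1-\alpha}$ is bounded on $L^1$, also $J_{t_0,t}^{1-\alpha}\|u_\varepsilon\|_H^2\to J_{t_0,t}^{1-\alpha}\|u\|_H^2$ in $L^1$. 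Testing the smooth inequality against an arbitrary $\phi\in C_c^\infty(t_0,t_1)$ with $\phi\ge0$, writing $D_{t_0,t}^\alpha\|u_\varepsilon\|_H^2=(d/dt)J_{t_0,t}^{1-\alpha}\|u_\varepsilon\|_H^2$ and integrating by parts to transfer the $t$-derivative onto $\phi$, I pass to the limit $\varepsilon\to0$ using only these $L^1$ and $L^2$ convergences and conclude
\[
\int_{t_0}^{t_1}\phi\,D_{t_0,t}^\alpha\|u\|_H^2\,dt\le 2\int_{t_0}^{t_1}\phi\,\langle D_{t_0,t}^\alpha u,u\rangle_{V',V}\,dt,
\]
the left-hand side being recovered by integrating by parts back through $J_{t_0,t}^{1-\alpha}\|u\|_H^2\in W^{1,1}(t_0,t_1;\mathbb{R})$. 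As $\phi\ge0$ is arbitrary, the pointwise a.e. inequality follows, and the argument is verbatim for the Caputo case. Notably, this step does not presuppose the continuity conclusion.

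The continuity of $u$ into $H$ I treat separately. The fractional fundamental theorem of calculus turns the hypotheses into the representation $u=J_{t_0,t}^\alpha D_{t_0,t}^\alpha u$ in case (i)—the boundary contribution $J_{t_0,t}^{1-\alpha}u(t_0^+)$ being forced to vanish exactly by $p>1/(1-\alpha)$—so that $u\in C([t_0,t_1];V')$ and hence $u$ is weakly continuous into $H$; an analogous representation handles the Caputo case. Since weak continuity into $H$ combined with continuity of $t\mapsto\|u(t)\|_H^2$ upgrades to strong continuity by the standard Hilbert-space principle (weak convergence together with convergence of norms gives strong convergence), it remains to produce a continuous representative of $t\mapsto\|u(t)\|_H^2$, which I would extract from the energy inequality just established together with the $W^{1,1}$-hypothesis, by a fractional interpolation of Lions--Magenes type between $u\in L^p(t_0,t_1;V)$ and its $V'$-regularity.

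The step I expect to be the main obstacle is the regularization near the left endpoint $t_0$. Because $D_{t_0,t}^\alpha$ retains memory back to $t_0$, an ordinary convolution mollifier fails to commute with it there, so the approximation must be engineered—for instance through an extension across $t_0$ compatible with the fractional regularity before convolving—so that simultaneously $D_{t_0,t}^\alpha u_\varepsilon\to D_{t_0,t}^\alpha u$ in $L^2(t_0,t_1;V')$ and the nonnegative boundary term $c_\alpha\|u_\varepsilon(t)\|_H^2/(t-t_0)^\alpha$ (respectively its Caputo analogue) does not degenerate into a sign-indefinite contribution in the limit. This is precisely where the threshold $p>1/(1-\alpha)$ earns its place, since it is what rules out a singular boundary term; once this is secured, the remaining convergences are routine consequences of the boundedness of $J_{t_0,t}^{1-\alpha}$ on $L^1$ and of the embedding $L^p\subset L^2$ on the bounded interval.
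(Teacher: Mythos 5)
First, a point of order: the paper does not prove this theorem at all — it is quoted verbatim from \cite{CarFe0} (Theorem 4.16), so your attempt can only be measured against the closely related machinery the present paper builds for its refinement. On that score your core mechanism is the right one and matches the paper's philosophy: your Marchaud-plus-polarization computation showing that the defect $2\langle D_{t_0,t}^\alpha u,u\rangle_{V',V}-D_{t_0,t}^\alpha\|u\|_H^2$ equals $c_\alpha\big[\|u(t)\|_H^2(t-t_0)^{-\alpha}+\alpha\int_{t_0}^t\|u(t)-u(s)\|_H^2(t-s)^{-1-\alpha}ds\big]\geq 0$ for smooth $u$ is exactly the Alikhanov identity \cite{Al1}, which the paper reproves in integrated form in Lemma \ref{202504021931} and Proposition \ref{202504031004} and in pointwise vector-valued form in Theorem \ref{teogenalsaedi}; and your limit passage — testing against $0\leq\phi\in C_c^\infty$, writing $D_{t_0,t}^\alpha\|u_\varepsilon\|_H^2$ as the derivative of $J_{t_0,t}^{1-\alpha}\|u_\varepsilon\|_H^2$, integrating by parts, and recovering the left side through the $W^{1,1}$ hypothesis — is sound and is precisely how that hypothesis is meant to be spent.

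The first genuine gap is the one you flag yourself but do not close: the approximation. For the pairing $\int\phi\,(D_{t_0,t}^\alpha u_\varepsilon,u_\varepsilon)_H\,dt$ to converge to $\int\phi\,\langle D_{t_0,t}^\alpha u,u\rangle_{V',V}\,dt$ you need \emph{simultaneously} $u_\varepsilon\to u$ in $L^2(t_0,t_1;V)$ and $D_{t_0,t}^\alpha u_\varepsilon\to D_{t_0,t}^\alpha u$ in $L^2(t_0,t_1;V')$, and in the Gelfand-triple setting no construction in your proposal delivers both. The natural device — the one the paper uses in Proposition \ref{202504031146} — takes $\phi_j\to D_t^\alpha f$ in $L^2$ and sets $\psi_j:=J_t^\alpha\phi_j$, so that $D_t^\alpha\psi_j=\phi_j$ exactly; but applied here with $\phi_j\to D_{t_0,t}^\alpha u$ in $L^2(t_0,t_1;V')$ it yields $\psi_j\to J_{t_0,t}^\alpha D_{t_0,t}^\alpha u=u$ only in $V'$-valued norms, with no reason whatsoever for convergence in $L^p(t_0,t_1;V)$. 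Ordinary mollification recovers $V$-valued convergence but, as you note, does not commute with the memory of $D_{t_0,t}^\alpha$ at $t_0$; your proposed remedy (an extension across $t_0$ ``compatible with the fractional regularity'') is named, not constructed, and nothing verifies that the two convergences hold at once. This is exactly why the paper restricts to $H=V=L^2(\Omega)$, where $u$ and $D_t^\alpha u$ live in the same space and the $J_t^\alpha\phi_j$ trick works — and even there the extra hypothesis $u\in L^\infty$ is needed to kill the boundary term in $J_t^\alpha D_t^\alpha u=u$ (items (i)--(ii) of Proposition \ref{202507221111}), the role played in your case (i) by $p>1/(1-\alpha)$.

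The second gap is the continuity conclusion, which in your write-up is essentially asserted. Three concrete problems: (a) from $u=J_{t_0,t}^\alpha D_{t_0,t}^\alpha u$ with $D_{t_0,t}^\alpha u\in L^2(t_0,t_1;V')$ you infer $u\in C([t_0,t_1];V')$, but $J_t^\alpha$ maps $L^2$ into $C$ only when $\alpha>1/2$ (Proposition \ref{202507221156} requires $\alpha>1/p$); for $\alpha\leq 1/2$ one gets only improved integrability, $u\in L^{2/(1-2\alpha)}(t_0,t_1;V')$, by Proposition \ref{202507231029} — the paper itself emphasizes this obstruction in item (ii) of Remark \ref{202507271047}; (b) even granting continuity into $V'$, weak continuity into $H$ additionally requires a uniform bound $u\in L^\infty(t_0,t_1;H)$, which you never derive and which does not follow from $u\in L^p(t_0,t_1;V)$; (c) the upgrade to strong $H$-continuity needs $t\mapsto\|u(t)\|_H^2$ to be continuous, and the hypothesis only gives $J_{t_0,t}^{1-\alpha}\|u\|_H^2\in W^{1,1}\hookrightarrow AC$ — continuity of a function does not follow from continuity of its fractional integral, and your appeal to ``a fractional interpolation of Lions--Magenes type'' names a strategy without performing any of it. In sum: the inequality half of your proposal is a correct reduction modulo an approximation lemma that remains to be built (and whose construction is the real content in the $V$--$V'$ setting), while the continuity half contains a step that actually fails for $\alpha\leq 1/2$ as written and is otherwise incomplete.
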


In the particular case where $t_0 = 0$, $t_1 = T$, $H = V$, and either $D_{0,t}^\alpha u \in L^2(0,T; H)$ or $cD_{0,t}^\alpha u \in L^2(0,T; H)$, the duality pairing reduces to the inner product in $H$, and the inequalities above become
\begin{equation}\label{202505271019}
D_t^\alpha \|u(t)\|_H^2 \leq 2 (D_t^\alpha u(t), u(t))_H, \quad \text{a.e. } t \in [0, T],
\end{equation}
or
\begin{equation}\label{202505271020}
cD_t^\alpha \|u(t)\|_H^2 \leq 2 (cD_t^\alpha u(t), u(t))_H, \quad \text{a.e. } t \in [0, T],
\end{equation}
depending on whether one adopts the Riemann--Liouville or Caputo definition of the fractional derivative. Both expressions can be viewed as fractional analogues of the classical identity \eqref{202505271005}, which is fundamental to energy inequalities in the standard theory.

Motivated by this technical obstacle, we present a new proof of \eqref{202505271019} and \eqref{202505271020} in the setting $H = L^2(\Omega)$, where $\Omega \subset \mathbb{R}^n$ is open, under the additional assumption that $u \in L^\infty(0,T;L^2(\Omega))$. 

A key feature of our argument is that the regularity $J_{t_0,t}^{1-\alpha}\|u(t)\|_H^2 \in W^{1,1}(t_0,t_1;\mathbb{R})$ follows directly from the properties of the function $u$, rather than being assumed a priori. This refinement simplifies the assumptions required to apply the fractional energy inequality, broadens its applicability, and provides a rigorous foundation for employing the Faedo–Galerkin method in fractional evolution problems.

Our contribution advances the theory of fractional differential equations and paves the way for future developments, such as the study of fractional heat equations, as explored in the final section, and further topics to be addressed in ongoing work.

\subsection{Outline of the Paper}
The paper is organized as follows. Section \ref{pre} recalls some classical concepts and introduces several technical identities and auxiliary results that are used throughout the manuscript. In Section \ref{maintheorem}, we prove our main results, namely Theorem \ref{teodesdrl} and Corollary \ref{teodesdcap}. Finally, Section \ref{fracheat} illustrates how these results can be applied to establish existence, uniqueness, and regularity for a fractional heat equation in open and bounded subsets of $\mathbb{R}^n$.

\section{Notation and Preliminaries}
\label{pre}

We begin by recalling some classical function spaces, focusing on the Bochner--Lebesgue and Bochner--Sobolev spaces, which generalize the usual $L^p$ and $W^{1,p}$ spaces to the setting of Banach space-valued functions. Throughout this section, we assume that $X$ is a Banach space and that $T > 0$ is fixed. Integrability and differentiability for functions $f: [0,T] \to X$ are understood in the Bochner sense; for details, see~\cite{ArBaHiNe1,CaHa1,DuSc1}.

\begin{itemize}
\item[(i)] For $1 \leq p \leq \infty$, the space $L^p(0,T;X)$ consists of all Bochner-measurable functions $f: [0,T] \to X$ such that $\|f(t)\|_X \in L^p(0,T)$. Endowed with the norm
$$
\|f\|_{L^p(0,T;X)} := \left( \int_0^T \|f(t)\|_X^p \, dt \right)^{1/p}, \quad \text{for } 1 \leq p < \infty,
$$
and
$$
\|f\|_{L^\infty(0,T;X)} := \operatorname*{ess\,sup}_{t \in [0,T]} \|f(t)\|_X,
$$
this space is a Banach space.

\item[(ii)] For $1 \leq p \leq \infty$, the Sobolev space $W^{1,p}(0,T;X)$ consists of all functions $f \in L^p(0,T;X)$ whose weak derivative $f'$ exists and belongs to $L^p(0,T;X)$. Equipped with the norm
$$
\|f\|_{W^{1,p}(0,T;X)} := \|f\|_{L^p(0,T;X)} + \|f'\|_{L^p(0,T;X)},
$$
this is also a Banach space.

\item[(iii)] For $k \in \mathbb{Z}_+:=\{0,1,\ldots\}$, we denote by $C^k([0,T];X)$ the Banach space of functions $f: [0,T] \to X$ that are $k$ times continuously differentiable, equipped with the norm
$$
\|f\|_{C^k([0,T];X)} := \sum_{j=0}^k \sup_{t \in [0,T]} \|f^{(j)}(t)\|_X.
$$
In this context, $f^{(j)}$ denotes the $j$-th classical (i.e., pointwise) derivative of $f$. When $k = 0$, we can also write $C([0,T];X)$ instead of $C^0([0,T];X)$. 

\item[(iv)] We denote by $C^\infty([0,T];X)$ the space of functions that are infinitely differentiable from $[0,T]$ into $X$, that is,
$$
C^\infty([0,T];X) := \bigcap_{k=0}^\infty C^k([0,T];X).
$$

\item[(v)] Recall that the support of a function $\varphi: (0,T) \to X$ is defined as
$$
\operatorname{supp}(\varphi) := \overline{\{ t \in (0,T) \mid \varphi(t) \neq 0 \}}.
$$
Finally, the space of test functions, denoted by $C_c^\infty((0,T);X)$, is the space of all functions $\varphi \in C^\infty([0,T];X)$ such that $\operatorname{supp}(\varphi)$ is compactly contained in $(0,T)$. 

\end{itemize}

The notions introduced above are sufficient for us to define the fractional integral and derivatives considered in the present work. These concepts have been widely employed in the study of fractional calculus (see, for instance, \cite{AlCaVa1,CarFe0,CarFe1,CarFe2,CarFe3,CarFe4,CarFe05,CarPl1,GiNa1,LiPeJi1} as few examples).

\begin{definition}
Let $0 < \beta < \infty$ and $f : [0,T] \to X$ be given. 
\begin{itemize}
\item[(i)] The {Riemann--Liouville fractional integral} of order $\beta$ of $f$ is defined by
$$
J_t^{\beta} f(t) := \dfrac{1}{\Gamma(\beta)}\int_0^t (t - s)^{\beta-1}f(s) \, ds,
$$
for almost every $t \in [0, T]$ such that the right-hand side is well defined.

\item[(ii)] The {Riemann--Liouville fractional derivative} of order $\beta$ of a function $f$ is defined as
$$
D_t^{\beta} f(t) := \dfrac{d^{[\beta]}}{dt^{[\beta]}} \left(J_t^{[\beta]-\beta} f(t)\right),
$$
for almost every $t \in [0, T]$ for which the right-hand side is well defined. Here, $[\beta]$ denotes the smallest integer greater than or equal to $\beta$, and the symbol $(d^{[\beta]}/dt^{[\beta]})$ stands for the ${[\beta]}$-th weak derivative of $f$.

\item[(iii)] The {Caputo fractional derivative} of order $\beta$ of $f$ is defined by
$$
cD_t^{\beta} f(t) := D_t^\beta\left[f(t) - \sum_{j=0}^{[\beta]-1} \left(\dfrac{t^j}{j!}\right) f^{(j)}(0)\right],
$$
for almost every $t \in [0, T]$ such that the right-hand side is well defined. Here, $f^{(j)}(0)$ denotes the $j$-th classical derivative of $f$ evaluated at $t=0$.
\end{itemize}

\end{definition}

From this point on, we present and, when necessary, prove all the results that are used in Section \ref{maintheorem}, the main section of this work, where we establish inequalities \eqref{202505271019} and \eqref{202505271020}.

\begin{proposition} \label{202507221111}
Let $0 < \alpha <1$ and $X$ a Banach space.
\begin{itemize}
\item[(i)] If $p>1/(1-\alpha)$ and $f\in L^p(0,T;X)$, it holds that
$$J_{t}^{1-\alpha} f(t)\big|_{t=0}=0.$$
\item[(ii)] If $f \in L^1(0,T;X)$
\begin{equation*}
D^\alpha_t \left[J_{t}^{{\alpha}}f(t)\right]=f(t), \textrm{ for a.e. } t \in [0,T].
\end{equation*}
If additionally $J_t^{1-{\alpha}}f \in W^{1,1}(0,T;X)$, then
\begin{equation*}J^\alpha_t \big[D^\alpha_t f(t)\big] = f(t) - \frac{t^{\alpha-1}}{\Gamma(\alpha)} \Big( J^{1-\alpha}_s f(s) |_{s=0} \Big)\end{equation*}
\item[(iii)] If $f \in C([0,T];X)$
\begin{equation*}
cD^\alpha_t \left[J_{t}^{{\alpha}}f(t)\right]=f(t), \textrm{ for every } t \in [0,T].
\end{equation*}
\item[(iv)] For $f \in W^{1,1}(0,T;X)$ we have
\begin{equation*}
cD_t^\alpha f (t) = J^{1-\alpha}_t f'(t), \textrm{ for a.e. } t \in [0,T].
\end{equation*}
\end{itemize}
\end{proposition}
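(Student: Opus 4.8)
The plan is to obtain all four parts by assembling two standard facts of Bochner calculus: the semigroup law $J_t^a J_t^b = J_t^{a+b}$ for $a,b>0$ (which follows from Fubini's theorem together with the Beta--Gamma identity), and the Bochner fundamental theorem of calculus, namely that $\frac{d}{dt}J_t^1\varphi=\varphi$ for a.e.\ $t$ whenever $\varphi\in L^1(0,T;X)$. I will also use repeatedly that $J_t^\beta$ maps $L^1(0,T;X)$ into itself for every $\beta>0$, which comes from Young's convolution inequality since the kernel $t^{\beta-1}$ lies in $L^1(0,T)$.

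For part (i), I would estimate $J_t^{1-\alpha}f(t)$ directly with Hölder's inequality applied to the conjugate exponents $p$ and $p'=p/(p-1)$, which gives
\[
\|J_t^{1-\alpha}f(t)\|_X\le\frac{1}{\Gamma(1-\alpha)}\left(\int_0^t(t-s)^{-\alpha p'}\,ds\right)^{1/p'}\|f\|_{L^p(0,t;X)}.
\]
The hypothesis $p>1/(1-\alpha)$ is exactly the condition $\alpha p'<1$ that makes this singular integral finite and equal to a constant times $t^{1-\alpha p'}$; the resulting estimate is a constant multiple of $t^{\,1-1/p-\alpha}\|f\|_{L^p(0,T;X)}$, whose exponent is positive, so the bound tends to $0$ as $t\to0^+$, which is the claim (interpreting the value at $t=0$ as that of the continuous extension).

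For the first identity in part (ii), I would simply write $D_t^\alpha[J_t^\alpha f]=\frac{d}{dt}J_t^{1-\alpha}J_t^\alpha f=\frac{d}{dt}J_t^1 f=f$ a.e., using the semigroup law and the Bochner FTC. For the second identity I would set $g:=J_t^{1-\alpha}f$, so that $g\in W^{1,1}(0,T;X)$ by hypothesis and $D_t^\alpha f=g'$. Rather than integrating the singular kernel by parts, I would compute the first antiderivative: commuting the integrals and using $J_t^1 g'=g-g(0)$, $J_t^\alpha g=J_t^\alpha J_t^{1-\alpha}f=J_t^1 f$, and $J_t^\alpha c=t^\alpha c/\Gamma(\alpha+1)$ for constant $c\in X$, I obtain
\[
J_t^1\bigl(J_t^\alpha g'\bigr)=J_t^\alpha\bigl(J_t^1 g'\bigr)=J_t^\alpha\bigl(g-g(0)\bigr)=J_t^1 f-\frac{t^\alpha}{\Gamma(\alpha+1)}\,g(0).
\]
Differentiating both sides---the left-hand side returns $J_t^\alpha g'$ a.e.\ because $g'\in L^1$ forces $J_t^\alpha g'\in L^1$---yields $J_t^\alpha D_t^\alpha f=f-\frac{t^{\alpha-1}}{\Gamma(\alpha)}g(0)$, and $g(0)=J_s^{1-\alpha}f(s)\big|_{s=0}$ is precisely the stated boundary term (well defined since $W^{1,1}\hookrightarrow C$).

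For part (iii), continuity of $f$ gives $\|J_t^\alpha f(t)\|_X\le\|f\|_\infty\,t^\alpha/\Gamma(\alpha+1)\to0$, so $(J_t^\alpha f)\big|_{t=0}=0$ and hence $cD_t^\alpha[J_t^\alpha f]=D_t^\alpha[J_t^\alpha f]$, which equals $f$ by the first identity in (ii); moreover, for continuous $f$ the derivative $\frac{d}{dt}J_t^1 f$ exists classically at every point, upgrading ``a.e.'' to ``everywhere.'' For part (iv), using $f-f(0)=J_t^1 f'$ I would write $cD_t^\alpha f=\frac{d}{dt}J_t^{1-\alpha}(f-f(0))=\frac{d}{dt}J_t^1\bigl(J_t^{1-\alpha}f'\bigr)=J_t^{1-\alpha}f'$ a.e., again by the Bochner FTC since $J_t^{1-\alpha}f'\in L^1$. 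The only genuinely delicate point is the boundary term in (ii): the entire role of the hypothesis $J_t^{1-\alpha}f\in W^{1,1}$ is to guarantee that $g(0)$ exists and that $J_t^1 g'=g-g(0)$ holds, and the ``commute then antidifferentiate'' computation is what lets me avoid differentiating the singular kernel $(t-s)^{\alpha-1}$ against $g'$ directly.
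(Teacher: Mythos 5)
Your proposal is correct. Note, however, that the paper itself does not prove Proposition \ref{202507221111} at all: its ``proof'' consists of citations, item $(i)$ to \cite[Theorem 4.6]{CarFe0} and items $(ii)$--$(iv)$ to \cite[Propositions 2.34 and 2.35]{Car1}. What you have produced is therefore a self-contained replacement for those references, built entirely from the semigroup law $J_t^a J_t^b = J_t^{a+b}$, the Bochner fundamental theorem of calculus, and Young/H\"older estimates on the kernel $(t-s)^{\beta-1}$ --- which is indeed the standard machinery underlying the cited proofs. All the genuinely delicate points are handled correctly: in $(i)$ the condition $p>1/(1-\alpha)$ is exactly $\alpha p'<1$, and your bound $C\,t^{1-1/p-\alpha}\|f\|_{L^p}$ has positive exponent, so the evaluation at $t=0$ (understood as the continuous extension, consistent with the paper's usage) vanishes; in $(ii)$ your ``commute then antidifferentiate'' trick, computing $J_t^1(J_t^\alpha g') = J_t^1 f - t^\alpha g(0)/\Gamma(\alpha+1)$ with $g=J_t^{1-\alpha}f$ and then differentiating, cleanly produces the boundary term $t^{\alpha-1}g(0)/\Gamma(\alpha)$ while avoiding any differentiation of the singular kernel, and you correctly identify that the sole role of the hypothesis $J_t^{1-\alpha}f\in W^{1,1}$ is to give the absolutely continuous representative with $J_t^1 g' = g-g(0)$. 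One point worth making explicit in $(iii)$: before differentiating classically, the identity $J_t^{1-\alpha}J_t^\alpha f = J_t^1 f$, which Fubini gives only a.e.\ for $L^1$ data, holds for \emph{every} $t$ here because both sides are continuous in $t$ when $f\in C([0,T];X)$; with that one-line remark, your upgrade from ``a.e.'' to ``everywhere'' is complete. Your part $(iv)$, via $f-f(0)=J_t^1 f'$ and commuting $J_t^{1-\alpha}$ past $J_t^1$, matches the content of \cite[Proposition 2.34]{Car1}.
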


\begin{proof} The proof of item $(i)$ can be found in \cite[Theorem 4.6]{CarFe0}. Items $(ii)$ and $(iii)$ are classical identities whose proof can be found in \cite[Proposition 2.35]{Car1}. Finally, item $(iv)$ is proved in \cite[Proposition 2.34]{Car1}.
\end{proof}

\begin{proposition}[{\cite[Theorem 3.1]{CarFe1}}]\label{202507221103} Let $\alpha>0$, $1\leq p\leq\infty$ and $f\in L^p(0,T;X)$. Then $J_{t}^\alpha f(t)$ is Bochner integrable and belongs to $L^p(0,T;X)$. Furthermore, it holds that
\begin{equation*}\left[\int_{0}^{T}{\left\|J_{t}^\alpha f(t)\right\|^p_X}\,dt\right]^{1/p}\leq \left[\dfrac{T^\alpha}{\Gamma(\alpha+1)}\right] \|f\|_{L^p(0,T;X)}.\end{equation*}
In other words, $J_{t}^\alpha$ is a bounded linear operator from the space $L^p(0,T;X)$ into itself.
\end{proposition}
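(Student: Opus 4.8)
The plan is to recognize the Riemann--Liouville fractional integral as a convolution against an $L^1$ kernel and then invoke a Bochner-valued form of Young's convolution inequality, which I will realize concretely through Minkowski's integral inequality. Writing $k_\alpha(r) := r^{\alpha-1}/\Gamma(\alpha)$ for $r \in (0,T]$ and denoting by $\tilde{f}$ the extension of $f$ by zero outside $[0,T]$, the substitution $r = t-s$ recasts the definition as
$$J_t^\alpha f(t) = \frac{1}{\Gamma(\alpha)} \int_0^t (t-s)^{\alpha-1} f(s)\, ds = \int_0^t k_\alpha(r)\, \tilde{f}(t-r)\, dr.$$
The decisive scalar fact is that $k_\alpha \in L^1(0,T)$ with $\|k_\alpha\|_{L^1(0,T)} = T^\alpha/\Gamma(\alpha+1)$, which is exactly the constant in the statement; this is where $\alpha > 0$ enters, ensuring the singularity $r^{\alpha-1}$ is integrable near the origin for every admissible order.

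First I would dispose of the case $1 \leq p < \infty$. Applying Tonelli's theorem to the non-negative integrand $\|f(s)\|_X$ both legitimizes interchanging the order of integration and simultaneously certifies that $t \mapsto J_t^\alpha f(t)$ is well defined for almost every $t$. I would then apply Minkowski's integral inequality in the Bochner setting, which itself follows by combining the triangle inequality for Bochner integrals with the scalar Minkowski inequality applied to $\|\,\cdot\,\|_X$, to obtain
$$\left(\int_0^T \|J_t^\alpha f(t)\|_X^p\, dt\right)^{1/p} \leq \int_0^T k_\alpha(r) \left(\int_0^T \|\tilde{f}(t-r)\|_X^p\, dt\right)^{1/p} dr.$$
Since Lebesgue measure is translation invariant and $\tilde{f}$ vanishes outside $[0,T]$, for each fixed $r$ one has $\left(\int_0^T \|\tilde{f}(t-r)\|_X^p\, dt\right)^{1/p} \leq \|f\|_{L^p(0,T;X)}$. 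Factoring this constant out and integrating the kernel yields the bound $\|J_t^\alpha f\|_{L^p(0,T;X)} \leq (T^\alpha/\Gamma(\alpha+1))\,\|f\|_{L^p(0,T;X)}$, which establishes simultaneously the membership in $L^p(0,T;X)$ and the desired estimate.

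The endpoint $p = \infty$ I would treat directly, bypassing the convolution machinery: for almost every $t$,
$$\|J_t^\alpha f(t)\|_X \leq \frac{1}{\Gamma(\alpha)}\int_0^t (t-s)^{\alpha-1}\|f(s)\|_X\, ds \leq \frac{\|f\|_{L^\infty(0,T;X)}}{\Gamma(\alpha)}\int_0^t (t-s)^{\alpha-1}\, ds = \frac{t^\alpha}{\Gamma(\alpha+1)}\,\|f\|_{L^\infty(0,T;X)},$$
and taking the essential supremum over $t \in [0,T]$ gives the claim with the same constant.

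The step I expect to require the most care is the measurability and the legitimacy of the above manipulations in the vector-valued setting: one must confirm that $(s,t) \mapsto (t-s)^{\alpha-1} f(s)$ is jointly Bochner measurable, so that the Fubini--Tonelli theorem and the Bochner form of Minkowski's integral inequality genuinely apply to $X$-valued integrands. I would reduce this to the scalar situation using the Pettis measurability theorem, passing to the separable closed subspace of $X$ generated by the essential range of $f$; once measurability is secured, the scalar estimates on $\|f(s)\|_X$ transfer verbatim, and the remaining manipulations are routine.
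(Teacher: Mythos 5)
Your proof is correct: the convolution representation $J_t^\alpha f(t)=\int_0^T k_\alpha(r)\tilde f(t-r)\,dr$ with $\|k_\alpha\|_{L^1(0,T)}=T^\alpha/\Gamma(\alpha+1)$, the Bochner-valued Minkowski integral inequality, the translation-invariance bound on the inner norm, the direct endpoint estimate for $p=\infty$, and the measurability reduction via Pettis' theorem together yield exactly the stated bound. The paper gives no inline proof, importing the result from \cite[Theorem 3.1]{CarFe1}, where it is established by essentially this same Young-type convolution estimate, so your argument coincides with the standard approach.
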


\begin{proposition}[{\cite[Theorem 7]{CarFe4}}] \label{202507221156}
Consider $1<p<\infty$, $1/p<\alpha<\infty$ and let $f\in L^p(0,T;  X)$. Then $J_{t}^\alpha f\in C\big([0,T];X\big)$ and there exists $K_{\alpha,p,T}>0$ such that
\begin{equation*}\sup_{t\in[0,T]}\|J^\alpha_{t}f(t)\|_X\leq K_{\alpha,p,T}\left[\int_{0}^{T}{\|f(t)\|_X^p}\,dt\right]^{1/p}.\end{equation*}
As a consequence, $J_{t}^\alpha$ defines a bounded linear operator from the space $L^p(0,T;X)$ into the space $C\big([0,T];X\big)$. 

\end{proposition}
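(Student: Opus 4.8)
The plan is to separate the statement into two parts: the sup-norm bound, which is a single application of Hölder's inequality, and the continuity $J_t^\alpha f \in C([0,T];X)$, which is the more delicate assertion and which I would obtain by a density argument built on that bound.

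For the bound, let $q = p/(p-1)$ be the conjugate exponent of $p$. For fixed $t \in (0,T]$, I would start from
\begin{equation*}
\|J_t^\alpha f(t)\|_X \le \frac{1}{\Gamma(\alpha)}\int_0^t (t-s)^{\alpha-1}\|f(s)\|_X \, ds
\end{equation*}
and apply Hölder's inequality to the pair $(t-s)^{\alpha-1}$ and $\|f(s)\|_X$, with exponents $q$ and $p$. The decisive point, and the reason the hypothesis $\alpha > 1/p$ enters, is that $(\alpha-1)q > -1$ is equivalent to $\alpha > 1/p$; hence $s \mapsto (t-s)^{\alpha-1}$ lies in $L^q(0,t)$ and
\begin{equation*}
\left(\int_0^t (t-s)^{(\alpha-1)q}\, ds\right)^{1/q} = \left(\frac{t^{(\alpha-1)q+1}}{(\alpha-1)q+1}\right)^{1/q} \le \left(\frac{T^{(\alpha-1)q+1}}{(\alpha-1)q+1}\right)^{1/q}.
\end{equation*}
Combining these gives $\|J_t^\alpha f(t)\|_X \le K_{\alpha,p,T}\|f\|_{L^p(0,T;X)}$ uniformly in $t$, with $K_{\alpha,p,T} := \frac{1}{\Gamma(\alpha)}\left(\frac{T^{(\alpha-1)q+1}}{(\alpha-1)q+1}\right)^{1/q}$; taking the supremum over $t$ yields the asserted inequality. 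Replacing $T$ by $t$ in the same estimate further shows $\|J_t^\alpha f(t)\|_X \to 0$ as $t \to 0^+$, which pins down the value at the left endpoint.

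For continuity I would first treat $f \in C([0,T];X)$. The substitution $s = ut$ recasts the integral as
\begin{equation*}
J_t^\alpha f(t) = \frac{t^\alpha}{\Gamma(\alpha)}\int_0^1 (1-u)^{\alpha-1} f(ut)\, du, \quad t \in [0,T],
\end{equation*}
where $t^\alpha$ is continuous and, since $f$ is bounded and $(1-u)^{\alpha-1} \in L^1(0,1)$, the integral depends continuously on $t$ by dominated convergence; at $t=0$ both sides vanish. Thus $J_t^\alpha f \in C([0,T];X)$ for continuous $f$. For general $f \in L^p(0,T;X)$ with $p < \infty$, I would pick continuous functions $f_n \to f$ in $L^p(0,T;X)$ and apply the bound of the first part to $f - f_n$, obtaining $J_t^\alpha f_n \to J_t^\alpha f$ uniformly on $[0,T]$. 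Since each $J_t^\alpha f_n$ is continuous and a uniform limit of continuous $X$-valued functions is continuous, $J_t^\alpha f \in C([0,T];X)$.

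The Hölder estimate is routine; the genuine obstacle is the continuity. Two points there require care: verifying that $\alpha > 1/p$ is precisely what places the kernel in $L^q$ (so that the bound is finite and $J_t^\alpha f(t) \to 0$ at the origin), and justifying that the chosen dense class of continuous functions (or, alternatively, step functions, which admit explicit continuous primitives in $t$) is carried into $C([0,T];X)$, so that the uniform convergence furnished by the operator bound upgrades to continuity of the limit. The final boundedness statement from $L^p(0,T;X)$ into $C([0,T];X)$ is then immediate, since the sup-norm of $J_t^\alpha f$ is exactly the quantity controlled by the inequality.
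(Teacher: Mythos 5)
Your proof is correct. One point of comparison must be made precise first: the paper does not prove this proposition at all --- it is imported verbatim from \cite[Theorem 7]{CarFe4}, so there is no in-paper argument to match yours against. On its own merits, your two-stage argument is sound: the H\"older estimate with conjugate exponent $q=p/(p-1)$ is valid precisely because $(\alpha-1)q>-1 \iff \alpha>1/p$, your constant $K_{\alpha,p,T}=\Gamma(\alpha)^{-1}\bigl(T^{(\alpha-1)q+1}/((\alpha-1)q+1)\bigr)^{1/q}$ is the right one, the substitution $s=ut$ plus dominated convergence correctly gives continuity for $f\in C([0,T];X)$ (including the value $0$ at $t=0$), and the density of $C([0,T];X)$ in $L^p(0,T;X)$ for $p<\infty$ together with the uniform operator bound upgrades this to all of $L^p(0,T;X)$, since uniform limits of continuous $X$-valued functions are continuous. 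A worthwhile contrast with the cited source: results of this type are usually proved there by estimating the difference $\|J^\alpha_t f(t)-J^\alpha_s f(s)\|_X$ directly via H\"older applied to the kernel difference $(t-\cdot)^{\alpha-1}-(s-\cdot)^{\alpha-1}$, which yields the stronger conclusion that $J^\alpha_t f$ is H\"older continuous of exponent $\alpha-1/p$ (at least in the range $1/p<\alpha<1+1/p$, with case distinctions for larger $\alpha$); your density route forgoes that quantitative modulus of continuity but, in exchange, treats the entire range $1/p<\alpha<\infty$ uniformly with no case analysis, and it is exactly enough for the statement as quoted, where only membership in $C([0,T];X)$ and the sup-norm bound are claimed.
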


\begin{corollary}\label{202507221039}Let $0<\alpha<1$ and $\varphi \in C^\infty([0,T]; X)$ such that $\varphi^{(k-1)}(0) = 0$ for all $k \in \mathbb{N}:=\{1,2,\ldots\}$. Then $\psi := J_t^\alpha \varphi$ belongs to $C^\infty([0,T]; X)$ and satisfies $\psi^{(k-1)}(0) = 0$ for all $k \in \mathbb{N}$.
\end{corollary}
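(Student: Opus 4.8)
The plan is to prove, by induction on $k$, that $\psi=J_t^\alpha\varphi$ admits classical derivatives of every order and that they are given by the clean formula $\psi^{(k)}=J_t^\alpha\varphi^{(k)}$ on $[0,T]$. From this, both the $C^\infty$-regularity and the vanishing at $t=0$ follow immediately from the mapping properties of $J_t^\alpha$ recorded above. The hypothesis $\varphi^{(k)}(0)=0$ for all $k$ is precisely what makes the induction close.

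The first and central step is a commutation identity: for every $g\in C^\infty([0,T];X)$ with $g(0)=0$, I claim that $J_t^\alpha g\in C^1([0,T];X)$ with $(J_t^\alpha g)'=J_t^\alpha g'$. To see this, I would use $g(0)=0$ to write $g(t)=\int_0^t g'(s)\,ds=J_t^1 g'(t)$, and then invoke the semigroup law $J_t^\alpha J_t^1=J_t^{\alpha+1}$ (a one-line application of Fubini's theorem) to obtain $J_t^\alpha g=J_t^{\alpha+1}g'$. The point of this reduction is that the kernel of $J_t^{\alpha+1}$, namely $(t-s)^\alpha/\Gamma(\alpha+1)$, now vanishes at $s=t$; differentiating under the integral sign therefore produces no boundary contribution, and since $\alpha/\Gamma(\alpha+1)=1/\Gamma(\alpha)$ one is left precisely with $J_t^\alpha g'$. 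Continuity of $J_t^\alpha g'$ on $[0,T]$, and hence the $C^1$ conclusion including the right derivative at $t=0$, follows from Proposition~\ref{202507221156} applied with any $p$ large enough that $1/p<\alpha$, since $g'\in C([0,T];X)\subset L^p(0,T;X)$.

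With the commutation identity in hand, the induction is routine. Since $\varphi^{(k)}(0)=0$ for every $k\ge 0$, I apply the identity successively with $g=\varphi,\varphi',\varphi'',\dots$: the base case is $\psi=J_t^\alpha\varphi$, and assuming $\psi^{(k)}=J_t^\alpha\varphi^{(k)}$, the identity applied to $g=\varphi^{(k)}$ (legitimate because $\varphi^{(k)}(0)=0$) gives $\psi^{(k+1)}=(J_t^\alpha\varphi^{(k)})'=J_t^\alpha\varphi^{(k+1)}$. Thus $\psi^{(k)}=J_t^\alpha\varphi^{(k)}$ for every $k$, and each such function is continuous by Proposition~\ref{202507221156}; hence $\psi\in C^\infty([0,T];X)$.

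Finally, for the vanishing at the origin I would use the crude estimate
\[
\|\psi^{(k)}(t)\|_X=\|J_t^\alpha\varphi^{(k)}(t)\|_X\le\frac{1}{\Gamma(\alpha)}\Big(\int_0^t(t-s)^{\alpha-1}\,ds\Big)\sup_{[0,T]}\|\varphi^{(k)}\|_X=\frac{t^\alpha}{\Gamma(\alpha+1)}\sup_{[0,T]}\|\varphi^{(k)}\|_X,
\]
which tends to $0$ as $t\to 0^+$; combined with the continuity of $\psi^{(k)}$ this yields $\psi^{(k)}(0)=0$ for all $k$, that is, $\psi^{(k-1)}(0)=0$ for all $k\in\mathbb{N}$. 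The main obstacle is the commutation step: justifying differentiation under the integral with a moving endpoint and an integrable kernel singularity. The reduction $J_t^\alpha g=J_t^{\alpha+1}g'$, made possible by $g(0)=0$, is exactly the device that removes the singularity from the differentiation and renders this step clean.
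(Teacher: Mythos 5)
Your proof is correct and follows essentially the same route as the paper: both arguments hinge on the iterated commutation identity $\psi^{(k)}=J_t^\alpha\varphi^{(k)}$ (which the paper obtains by citing item $(iv)$ of Proposition~\ref{202507221111}, i.e.\ $\frac{d}{dt}J_t^\alpha[g-g(0)]=J_t^\alpha g'$, where you instead derive it directly via $J_t^\alpha g=J_t^{\alpha+1}g'$ and differentiation under the integral) and then invoke Proposition~\ref{202507221156} for the $C^\infty$ regularity. The only other difference is cosmetic: at the origin the paper cites item $(i)$ of Proposition~\ref{202507221111}, while you verify $\psi^{(k)}(0)=0$ by the explicit bound $\|J_t^\alpha\varphi^{(k)}(t)\|_X\le \big(t^\alpha/\Gamma(\alpha+1)\big)\sup_{[0,T]}\|\varphi^{(k)}\|_X$ — both are valid.
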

\begin{proof} From the regularity of $\varphi$, the condition $\varphi^{(k-1)}(0) = 0$ for all $k \in \mathbb{N}$, and item $(iv)$ of Proposition \ref{202507221111}, we deduce that
\begin{multline*}\psi^{(k-1)}(t)=\dfrac{d^{(k-1)}}{dt^{(k-1)}}\Big\{J^\alpha_t \big[\varphi(t)-\varphi(0)\big]\Big\}=\dfrac{d^{(k-2)}}{dt^{(k-2)}}\Big\{J^\alpha_t \varphi^\prime(t)\Big\}
\\=\dfrac{d^{(k-2)}}{dt^{(k-2)}}\Big\{J^\alpha_t \big[\varphi^\prime(t)-\varphi^\prime(0)\big]\Big\}=\dfrac{d^{(k-3)}}{dt^{(k-3)}}\Big\{J^\alpha_t \varphi^{\prime\prime}(t)\Big\}=\ldots=J^\alpha_t \varphi^{(k-1)}(t),\end{multline*}
for all $k \in \mathbb{N}$ and almost every $t\in[0,T]$. Then, Proposition \ref{202507221156} implies that $\psi \in C^\infty([0,T]; X)$. Moreover, by applying item (i) of Proposition \ref{202507221111}, we conclude that $\psi^{(k-1)}(0) = 0$ for all $k \in \mathbb{N}$.
\end{proof}

\begin{proposition}[{\cite[Theorem 5]{CarFe2}}]\label{202507231029}Consider $1<p<\infty$ and $0<\alpha<1/p$. If $f\in L^p(0,T;X)$, then $J_{t}^\alpha f(t)$ is Bochner integrable in $[0,T]$ and belongs to $L^{p/(1-p\alpha)}(0,T;X)$. Moreover, there exists $K_{\alpha,p,T}>0$ such that
\begin{equation*}\left[\int_{0}^T{\left\|J^\alpha_{t}f(t)\right\|_X^{p/(1-p\alpha)}}\,dt\right]^{(1-p\alpha)/p}\leq K_{\alpha,p,T}\left[\int_{0}^T{\|f(t)\|_X^p}\,dt\right]^{1/p}.\end{equation*}
In other words, $J_{t}^\alpha$ defines a bounded linear operator from the space $L^p(0,T;X)$ into the space $L^{p/(1-p\alpha)}(0,T;X)$.
\end{proposition}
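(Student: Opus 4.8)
The plan is to reduce the Banach-space-valued statement to a scalar Hardy--Littlewood--Sobolev inequality, and then to prove the latter by the classical kernel-splitting argument. Since the kernel of $J_t^\alpha$ is scalar and nonnegative, the norm inequality for the Bochner integral gives the pointwise bound
\[
\|J_t^\alpha f(t)\|_X \le \frac{1}{\Gamma(\alpha)}\int_0^t (t-s)^{\alpha-1}\|f(s)\|_X\,ds = J_t^\alpha g(t),
\]
where $g(t):=\|f(t)\|_X$ is a nonnegative scalar function with $\|g\|_{L^p(0,T)}=\|f\|_{L^p(0,T;X)}$. Hence it suffices to prove the stated inequality for nonnegative scalar $g$; once this is done, raising the displayed bound to the power $q:=p/(1-p\alpha)$ and integrating transfers it back to $f$, while the Bochner measurability and integrability of $J_t^\alpha f$ are already furnished by Proposition~\ref{202507221103}.

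For the scalar inequality I would split the convolution at a free scale $R>0$. Substituting $\tau=t-s$, write $J_t^\alpha g(t)=\frac{1}{\Gamma(\alpha)}\big(\int_0^R+\int_R^t\big)\tau^{\alpha-1}g(t-\tau)\,d\tau$. For the near part, a dyadic decomposition of $(0,R)$ together with the monotonicity of $\tau^{\alpha-1}$ bounds it by $C_\alpha R^\alpha Mg(t)$, where $Mg$ is the one-sided Hardy--Littlewood maximal function of the zero-extension of $g$; the resulting geometric series converges precisely because $\alpha>0$. For the far part, Hölder's inequality with $p'=p/(p-1)$ gives
\[
\int_R^t \tau^{\alpha-1}g(t-\tau)\,d\tau \le \Big(\int_R^\infty \tau^{(\alpha-1)p'}\,d\tau\Big)^{1/p'}\|g\|_{L^p(0,T)} = C_{\alpha,p}\,R^{\alpha-1/p}\|g\|_{L^p(0,T)},
\]
and here the hypothesis $\alpha<1/p$ is exactly what forces $(\alpha-1)p'<-1$, so that the tail integral converges.

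Combining the two estimates yields $J_t^\alpha g(t)\le C\big[R^\alpha Mg(t)+R^{\alpha-1/p}\|g\|_{L^p(0,T)}\big]$; choosing $R$ to balance the two terms (that is, $R\sim(\|g\|_{L^p(0,T)}/Mg(t))^p$) produces the pointwise interpolation inequality $J_t^\alpha g(t)\le C\,(Mg(t))^{1-p\alpha}\,\|g\|_{L^p(0,T)}^{p\alpha}$. Raising this to the power $q$, integrating, and using the key numerical identities $(1-p\alpha)q=p$ and $p\alpha q+p=q$ reduces the whole matter to the $L^p$-boundedness of the maximal operator:
\[
\|J_t^\alpha g\|_{L^q(0,T)}^q \le C\,\|g\|_{L^p(0,T)}^{p\alpha q}\,\|Mg\|_{L^p(0,T)}^p \le C\,\|g\|_{L^p(0,T)}^{p\alpha q+p} = C\,\|g\|_{L^p(0,T)}^{q}.
\]
Taking $q$-th roots gives the scalar inequality with an admissible constant $K_{\alpha,p,T}$, which completes the argument after the reduction of the first paragraph.

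The genuinely nontrivial input is the $L^p$-boundedness of the Hardy--Littlewood maximal function for $1<p<\infty$; this is the one place where the hypothesis $p>1$ is indispensable (the inequality is false at $p=1$), and it is the main obstacle in the sense that everything else is elementary. In writing the details I would still need to check that the near-part estimate and the optimization are uniform in $t$ and that passing to the zero-extension of $g$ does not inflate the norms, both of which hold because the maximal function on $\mathbb{R}$ dominates its one-sided, interval-restricted version.
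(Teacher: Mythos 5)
Your proposal is correct, but note that this paper contains no internal proof of the proposition: it is imported verbatim from the reference [CarFe2] (Theorem 5 of ``The Riemann--Liouville fractional integral in Bochner--Lebesgue spaces II''), so the comparison is necessarily with the cited source rather than with an argument in this manuscript. Your first reduction --- the pointwise bound $\|J_t^\alpha f(t)\|_X \le J_t^\alpha \|f(\cdot)\|_X(t)$, which transfers everything to a nonnegative scalar function --- is the same move the cited source makes, since the kernel is scalar and the classical Hardy--Littlewood theorem on one-sided fractional integrals is the underlying scalar fact. Where you genuinely diverge is in how the scalar inequality is obtained: you reprove it from scratch via Hedberg's trick (splitting the convolution at a free scale $R$, bounding the near part by $C R^\alpha M g(t)$ through a dyadic sum, the far part by $C R^{\alpha-1/p}\|g\|_{L^p}$ through H\"older --- correctly identifying $\alpha<1/p$ as exactly the condition $(\alpha-1)p'<-1$ --- and optimizing in $R$ to get $J^\alpha g \le C (Mg)^{1-p\alpha}\|g\|_{L^p}^{p\alpha}$), then closing with the $L^p$-boundedness of the maximal function, which is indeed where $p>1$ is indispensable. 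Your exponent bookkeeping $(1-p\alpha)q=p$ and $p\alpha q+p=q$ checks out, and the one-sided, zero-extension, and $R>t$ edge cases are handled correctly. What your route buys is self-containedness modulo the Hardy--Littlewood maximal theorem, plus a slightly stronger conclusion than stated: since the exponent relation $1/q=1/p-\alpha$ is scale-invariant, Hedberg's argument produces a constant independent of $T$, whereas the statement only asks for some $K_{\alpha,p,T}$. What the citation route buys is brevity and access to the finer results of [CarFe2] (e.g., sharpness and endpoint failure at $p=1$, where only a weak-type estimate survives), which your argument does not address and does not need to.
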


\begin{theorem}[{\cite[Theorem 28]{CarFe3}}]\label{202507231041} For $1<p<\infty$ and $1\leq q<\infty$, there exists $K_{p,q}>0$ such that
\begin{equation*}\left[\int_{0}^T{\|J_{t}^{1/p}f(t)\|_X^q}\,dt\right]^{1/q}\leq K_{p,q}\left[\int_{0}^T{\|f(t)\|_X^p}\,dt\right]^{1/p},\end{equation*}
for every $f\in L^p(0, T; X)$. That is, $J_t^{1/p}$ defines a bounded linear operator from the space $L^p(0,T;X)$ into the space $L^q(0,T;X)$.
\end{theorem}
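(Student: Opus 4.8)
The plan is to reduce the vector-valued estimate to a scalar convolution bound and then invoke Young's convolution inequality with a carefully chosen auxiliary exponent. Writing $\alpha = 1/p$ and $k(\tau) := \tau^{\alpha - 1}$ for $\tau \in (0,T]$, note that for $t \in [0,T]$ we have $J_t^{1/p} f(t) = \Gamma(1/p)^{-1}\int_0^t k(t-s)\,f(s)\,ds$, so that, extending $f$ and $k$ by zero outside $[0,T]$,
\[
\|J_t^{1/p} f(t)\|_X \le \frac{1}{\Gamma(1/p)} \int_0^t k(t-s)\, \|f(s)\|_X\, ds = \frac{1}{\Gamma(1/p)}\,(k * g)(t),
\]
where $g(s) := \|f(s)\|_X$ lies in $L^p(0,T)$ with $\|g\|_{L^p(0,T)} = \|f\|_{L^p(0,T;X)}$. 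This pointwise domination reduces the claim to the scalar estimate $\|k * g\|_{L^q} \le \|k\|_{L^r}\,\|g\|_{L^p}$, which is exactly Young's inequality once the exponents are admissible.

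Next I would fix $r$ by the Young relation $1/r = 1 + 1/q - 1/p$ and verify admissibility. The decisive computation is that $k \in L^r(0,T)$: since $\int_0^T \tau^{(\alpha - 1)r}\, d\tau < \infty$ if and only if $(\alpha - 1)r > -1$, i.e. $r < 1/(1-\alpha) = p'$, I must check $r < p'$. From $1/r = 1 + 1/q - 1/p$ and $1/p' = 1 - 1/p$ we obtain $1/r - 1/p' = 1/q > 0$, hence $r < p'$ for every \emph{finite} $q$; this is precisely where the hypothesis $q < \infty$ is used. For $q \ge p$ one also has $1/r \le 1$, so $r \ge 1$ and Young's inequality applies, giving the conclusion with $K_{p,q} = \Gamma(1/p)^{-1}\|k\|_{L^r(0,T)}$, a finite constant depending on $p$, $q$, and $T$.

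For the remaining range $1 \le q < p$ I would simply combine the case $q = p$ (covered above, or equivalently the boundedness $J_t^{1/p}\colon L^p(0,T;X) \to L^p(0,T;X)$ from Proposition \ref{202507221103}) with the continuous embedding $L^p(0,T;X) \hookrightarrow L^q(0,T;X)$, which holds because $(0,T)$ has finite measure, with embedding constant $T^{1/q - 1/p}$.

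The main obstacle is conceptual rather than computational: the order $\alpha = 1/p$ is \emph{critical}, so the naive route of bounding $\|J_t^{1/p}f(t)\|_X$ pointwise via Hölder's inequality fails, because the kernel raised to the conjugate power, $\tau^{(\alpha-1)p'} = \tau^{-1}$, is not integrable; this reflects the genuine breakdown of the estimate at $q = \infty$. Young's inequality circumvents the difficulty by trading the endpoint for a subcritical auxiliary exponent $r < p'$, the strict inequality holding exactly because $q$ is finite (and the blow-up $\|k\|_{L^r} \to \infty$ as $r \to p'$ mirrors the loss of the estimate as $q \to \infty$). The only technical subtlety, passing to the $X$-valued setting, is already resolved by the pointwise domination $\|J_t^{1/p}f(t)\|_X \le \Gamma(1/p)^{-1}(k * g)(t)$, which reduces everything to the classical scalar Young inequality.
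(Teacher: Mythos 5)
Your proof is correct, but there is nothing internal to compare it with: the paper does not prove this statement at all, it imports it verbatim from \cite{CarFe3} (Theorem 28 there), so your argument serves as a self-contained, elementary substitute for that citation. The key computations check out: with $\alpha=1/p$ and $k(\tau)=\tau^{\alpha-1}$ extended by zero outside $(0,T]$, the Young relation $1/r=1+1/q-1/p$ gives $1/r-1/p'=1/q>0$, hence $r<p'=1/(1-\alpha)$ and $k\in L^r(0,T)$ precisely because $q<\infty$; for $q\ge p$ one has $r\ge1$ so the scalar Young inequality applies to $k*g$ with $g=\|f(\cdot)\|_X$, and your reduction of $1\le q<p$ to the case $q=p$ via the embedding $L^p(0,T;X)\hookrightarrow L^q(0,T;X)$ (or, equivalently, via Proposition \ref{202507221103}) closes the remaining range. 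Your diagnosis of criticality is also accurate: $(\alpha-1)p'=-1$, so the naive pointwise H\"older bound fails and the endpoint $q=\infty$ is genuinely lost, consistent with Proposition \ref{202507221156} requiring $\alpha>1/p$ for continuity into $C([0,T];X)$. One clarification worth recording: your constant $K_{p,q}=\Gamma(1/p)^{-1}\|k\|_{L^r(0,T)}$ behaves like a multiple of $T^{1/q}$, and a scaling argument ($f\mapsto f(\lambda\,\cdot)$, using $\alpha+1/q-1/p=1/q\neq0$) shows this $T$-dependence is unavoidable in the critical case, so the notation $K_{p,q}$ in the statement must be read with $T$ fixed rather than as a $T$-uniform constant. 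What your approach buys is transparency and economy --- a two-line reduction to scalar convolution plus classical Young on the truncated kernel --- at no loss of generality for the purposes of this paper, where the result is only ever invoked on a fixed finite interval (in the estimate of $\mathcal{K}_{jk}$ for $\alpha=1/2$).
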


\section{Auxiliary Results and the Fractional Energy Inequality}
\label{maintheorem}

The goal of this section is to demonstrate that the most delicate condition in Theorem \ref{202507181644}, namely $J^{1-\alpha}_t \left\Vert u \right\Vert_H^2 \in W^{1,1}(0,T;\mathbb{R})$, is indeed satisfied under the additional assumptions outlined in the introduction. To make the discussion more concrete, we begin by assuming that $\Omega \subset \mathbb{R}^n$ is an open set and that $T > 0$ is fixed. In what follows, we derive inequalities \eqref{202505271019} and \eqref{202505271020} under appropriate hypotheses, thus extending the applicability of Theorem \ref{202507181644}, as anticipated from the outset of this work.

To keep this work self-contained, we start by recalling the following particular case of \cite[Chapter~3, Lemma~1.2]{Te1}.

\begin{proposition}\label{202504021854}
If $f \in W^{1,2} \left( 0,T; L^2(\Omega)\right)$, then
\begin{equation*}
\frac{d}{dt} \left\Vert f(t) \right\Vert^2_{L^2(\Omega)} = 2 \big( f'(t),f(t) \big)_{L^2(\Omega)},
\end{equation*}
for almost every $t\in[0,T]$. Note that the derivatives above are understood in the weak sense.
\end{proposition}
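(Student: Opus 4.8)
The plan is to reduce the claim to the elementary case of smooth functions and then pass to the limit, so I begin by fixing the shorthand $H := L^2(\Omega)$. First I would recall the two standard structural facts about Bochner--Sobolev spaces valued in the (separable) Hilbert space $H$: that every $f \in W^{1,2}(0,T;H)$ admits an absolutely continuous representative satisfying the fundamental theorem of calculus $f(t) = f(s) + \int_s^t f'(\tau)\,d\tau$ for all $0 \le s \le t \le T$, and that $C^\infty([0,T];H)$ is dense in $W^{1,2}(0,T;H)$. The first fact gives in particular the continuous embedding $W^{1,2}(0,T;H) \hookrightarrow C([0,T];H)$, so that the scalar map $t \mapsto \|f(t)\|_H^2$ is well defined pointwise and continuous; both facts may be cited from the references on Bochner spaces collected in Section \ref{pre}.

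Next, using density I would select a sequence $(f_m) \subset C^\infty([0,T];H)$ with $f_m \to f$ in $W^{1,2}(0,T;H)$. For each smooth $f_m$, the scalar map $t \mapsto \|f_m(t)\|_H^2 = (f_m(t),f_m(t))_H$ is classically differentiable, and bilinearity together with the symmetry of the inner product yields the classical product rule $\frac{d}{dt}\|f_m(t)\|_H^2 = 2(f_m'(t),f_m(t))_H$ for every $t \in [0,T]$. Integrating this identity over an arbitrary subinterval $[s,t]$ produces
\[
\|f_m(t)\|_H^2 - \|f_m(s)\|_H^2 = 2\int_s^t \big(f_m'(\tau),f_m(\tau)\big)_H\, d\tau .
\]

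Finally, I would pass to the limit $m \to \infty$ in this equality. Because the embedding $W^{1,2}(0,T;H) \hookrightarrow C([0,T];H)$ is continuous, $f_m \to f$ uniformly on $[0,T]$, so the left-hand side converges to $\|f(t)\|_H^2 - \|f(s)\|_H^2$ for every $s,t$. For the right-hand side, Cauchy--Schwarz in $L^2(0,T;H)$ gives
\[
\int_0^T \big| (f_m',f_m)_H - (f',f)_H \big|\, d\tau \le \|f_m'-f'\|_{L^2(0,T;H)}\|f_m\|_{L^2(0,T;H)} + \|f'\|_{L^2(0,T;H)}\|f_m-f\|_{L^2(0,T;H)},
\]
which tends to $0$; hence $(f_m',f_m)_H \to (f',f)_H$ in $L^1(0,T;\mathbb{R})$ and the integral passes to the limit. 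We thereby obtain
\[
\|f(t)\|_H^2 - \|f(s)\|_H^2 = 2\int_s^t \big(f'(\tau),f(\tau)\big)_H\, d\tau, \qquad 0 \le s \le t \le T .
\]
Since $(f'(\cdot),f(\cdot))_H \in L^1(0,T;\mathbb{R})$ by Cauchy--Schwarz, this representation exhibits $t \mapsto \|f(t)\|_H^2$ as an absolutely continuous function whose weak derivative equals $2(f'(t),f(t))_H$ for a.e. $t$, which is exactly the asserted identity.

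The only genuinely delicate point is securing the two vector-valued structural facts invoked at the outset, namely the absolutely continuous representative with its fundamental theorem of calculus and the density of smooth functions; everything afterwards is routine. Within that routine part, the step requiring the most care is the bilinear estimate above, which is what upgrades the separate $L^2$ convergences of $f_m$ and $f_m'$ into $L^1$ convergence of the scalar integrand $(f_m',f_m)_H$ and thus legitimizes interchanging the limit with the integral.
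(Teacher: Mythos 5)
Your proposal is correct: the density of $C^\infty([0,T];H)$ in $W^{1,2}(0,T;H)$, the embedding $W^{1,2}(0,T;H)\hookrightarrow C([0,T];H)$ via the absolutely continuous representative, the classical product rule for smooth approximants, and the bilinear Cauchy--Schwarz estimate upgrading the two $L^2$ convergences to $L^1$ convergence of $(f_m',f_m)_H$ together form a complete and rigorous argument, and the final step (reading off the weak derivative from the integral representation with an $L^1$ integrand) is sound. Note, however, that the paper does not prove this proposition at all: it states it as a particular case of Temam \cite[Chapter~3, Lemma~1.2]{Te1} and relies on that citation. Temam's lemma is more general --- it treats $u \in L^2(0,T;V)$ with $u' \in L^2(0,T;V')$ in a Gelfand triple $V \subset H \subset V'$, where continuity into $H$ is itself part of the conclusion and the regularization argument must be run through the duality pairing $\langle \cdot,\cdot\rangle_{V',V}$ rather than the inner product. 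Your proof is precisely the elementary specialization of that mollification strategy to the case $V = H = L^2(\Omega)$: since both $f$ and $f'$ take values in $H$, the approximation can be done directly in the $W^{1,2}(0,T;H)$ norm and no duality bookkeeping is needed. What your route buys is self-containedness at the level of generality the paper actually uses (modulo the two standard structural facts about Bochner--Sobolev spaces, which you correctly flag as the points requiring citation); what the paper's citation buys is the stronger statement needed elsewhere in the literature, of which this proposition is the $V=H$ shadow.
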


With this in mind, we now turn to the main results of this work: Theorem \ref{teodesdrl}, which deals with the Riemann--Liouville fractional derivative of order $\alpha \in (0,1)$, and Corollary \ref{teodesdcap}, which addresses the Caputo fractional derivative of the same order.

Let us now present a technical lemma.

\begin{lemma}\label{202504021931}Let $0<\alpha<1$. If $f \in W^{1,2} \left(0,T; L^2(\Omega) \right)$, then
\begin{multline*}2 \int_\Omega\int_0^t\frac{(t-s)^{-\alpha}}{\Gamma(1-\alpha)} f^\prime(x,s) \big[ f(x,t) - f(x,s) \big] ds\,dx
\\=\dfrac{\alpha}{\Gamma(1-\alpha)}\int_\Omega\int_0^t (t-w)^{\alpha-1} \left( \int_0^w (t-s)^{-\alpha} f'(x,s) \,ds \right)^2 dw\,dx,\end{multline*}
for almost every $t\in[0,T]$.
 \end{lemma}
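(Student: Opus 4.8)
The plan is to reduce the asserted equality, which lives on $\Omega \times (0,T)$, to a purely one-dimensional (in time) identity holding for almost every fixed $x$, and then to integrate over $\Omega$. Using the canonical identification $L^2(0,T;L^2(\Omega)) \cong L^2(\Omega \times (0,T))$, I regard $f$ and its weak time-derivative $f'$ as measurable functions on $\Omega \times (0,T)$ with $f' \in L^2(\Omega \times (0,T))$; for almost every $x \in \Omega$ the slice $t \mapsto f(x,t)$ lies in $W^{1,2}(0,T) \subset AC([0,T])$, its pointwise derivative equals $h(\cdot) := f'(x,\cdot) \in L^2(0,T)$, and $f(x,t)-f(x,s)=\int_s^t h(\tau)\,d\tau$. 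Cancelling the common factor $1/\Gamma(1-\alpha)$, it then suffices to prove, for each $t \in (0,T]$ and a.e. $x$, the scalar identity
\[
2\int_0^t (t-s)^{-\alpha}\,h(s)\!\int_s^t h(\tau)\,d\tau\,ds \;=\; \alpha\int_0^t (t-w)^{\alpha-1} F(w)^2\,dw, \qquad F(w):=\int_0^w (t-s)^{-\alpha} h(s)\,ds .
\]

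For the scalar identity I would argue by integration by parts on the right-hand side. Since $(t-\cdot)^{-\alpha}h$ is bounded times $L^2$, hence integrable, on each $[0,t-\epsilon]$, the function $F$ is absolutely continuous there with $F'(w)=(t-w)^{-\alpha}h(w)$ a.e.; writing $\alpha(t-w)^{\alpha-1}=\frac{d}{dw}[-(t-w)^\alpha]$ and integrating by parts on $[0,t-\epsilon]$ gives
\[
\alpha\int_0^{t-\epsilon}(t-w)^{\alpha-1}F(w)^2\,dw = -\epsilon^\alpha F(t-\epsilon)^2 + 2\int_0^{t-\epsilon} (t-w)^\alpha F(w)F'(w)\,dw,
\]
and the last integrand simplifies to $2h(w)F(w)$ because $(t-w)^\alpha F'(w)=h(w)$. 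Letting $\epsilon \to 0^+$ and recognizing both $2\int_0^t h(w)F(w)\,dw$ and the left-hand side of the scalar identity as the same double integral $2\iint_{0<s<w<t}(t-s)^{-\alpha}h(s)h(w)\,ds\,dw$ (Fubini on the triangle, after renaming the dummy variable) closes the scalar identity.

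The crux — and the step I expect to require the most care — is justifying the limit $\epsilon \to 0^+$ together with the various Fubini and integrability claims, uniformly enough that one may afterwards integrate in $x$. The key observation is that the kernel operator $(Kg)(w)=\int_0^w(t-s)^{-\alpha}g(s)\,ds$ is Hilbert–Schmidt on $L^2(0,t)$ for every $\alpha\in(0,1)$: its kernel satisfies $\iint_{0<s<w<t}(t-s)^{-2\alpha}\,ds\,dw<\infty$, the only singular contribution being $\int_0^t(t-w)^{1-2\alpha}\,dw$, which converges precisely because $\alpha<1$ (with a harmless logarithm at $\alpha=1/2$). This yields $F=Kh\in L^2(0,t)$ with $\|F\|_{L^2}\le C(\alpha,t)\|h\|_{L^2}$, whence: (i) $hF\in L^1(0,t)$ and $(t-w)^{\alpha-1}F^2\in L^1(0,t)$, legitimizing the limit and Fubini; (ii) a Cauchy–Schwarz estimate bounds $\epsilon^\alpha F(t-\epsilon)^2$ by $O(\epsilon^{1-\alpha})$ (up to a logarithm at $\alpha=1/2$), so the boundary term vanishes; and (iii) each side of the scalar identity is dominated by $2C(\alpha,t)\|f'(x,\cdot)\|_{L^2(0,T)}^2$, which is integrable over $\Omega$ since $\int_\Omega\|f'(x,\cdot)\|_{L^2(0,T)}^2\,dx=\|f'\|_{L^2(\Omega\times(0,T))}^2<\infty$. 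Integrating the scalar identity over $\Omega$ and reinstating the factor $1/\Gamma(1-\alpha)$ then produces exactly the stated equality.
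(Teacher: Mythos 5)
Your proposal is correct and follows essentially the same route as the paper's proof: Fubini--Tonelli to rewrite the left side as $2\int_\Omega\int_0^t f'(x,w)F(x,w)\,dw\,dx$ with $F(x,w)=\int_0^w(t-s)^{-\alpha}f'(x,s)\,ds$, the observation that $f'F=\tfrac{(t-w)^{\alpha}}{2}\,\frac{d}{dw}F^2$, and an integration by parts in $w$ with vanishing boundary terms---and your sliced-in-$x$, $\epsilon$-truncated version in fact justifies the $w=t$ endpoint more explicitly than the paper does. Two cosmetic quibbles only: the boundary term is $O(\epsilon^{\min(\alpha,1-\alpha)})$ rather than $O(\epsilon^{1-\alpha})$ when $\alpha<1/2$, and the integrability of $(t-w)^{\alpha-1}F^2$ does not follow from $F\in L^2(0,t)$ alone but needs either the pointwise Cauchy--Schwarz bound $|F(w)|\le\|h\|_{L^2}\bigl(\int_0^w(t-s)^{-2\alpha}\,ds\bigr)^{1/2}$ or monotone convergence applied to your truncated identity, both of which are already implicit in your argument.
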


\begin{proof} By applying the Fubini-Tonelli theorem, we obtain
\begin{multline}\label{202504021935}
\int_\Omega\int_0^t\frac{(t-s)^{-\alpha}}{\Gamma(1-\alpha)} f^\prime(x,s) \big[ f(x,t) - f(x,s) \big] ds\,dx \\
  = \int_\Omega\int_0^t\frac{(t-s)^{-\alpha}}{\Gamma(1-\alpha)} f^\prime(x,s) \int_s^t f^\prime(x,w) \,dw\,ds\,dx\\
 =  \int_\Omega\int_0^t  f^\prime(x,w) \int_0^w \frac{(t-s)^{-\alpha}}{\Gamma(1-\alpha)}   f^\prime(x,s)  \,ds\,dw\,dx,
\end{multline}
for almost every $t \in [0,T]$.

Now, noting that for almost every $w\in(0,t)$ and almost every $x\in\Omega$ we have
\begin{equation*}
\dfrac{(t-w)^{\alpha}}{2} \frac{d}{dw} \left[\left( \int_0^w (t-s)^{-\alpha} f^\prime(x,s)ds \right)^2\right] = f^\prime(x,w) \left( \int_0^w (t-s)^{-\alpha} f^\prime(x,s)ds \right),
\end{equation*}
we deduce from \eqref{202504021935} the identity
\begin{multline}\label{202504022003}
\int_\Omega\int_0^t\frac{(t-s)^{-\alpha}}{\Gamma(1-\alpha)} f^\prime(x,s) \big[ f(x,t) - f(x,s) \big] ds\,dx \\
= \dfrac{1}{2\Gamma(1-\alpha)}\int_\Omega\int_0^t (t-w)^{\alpha} \frac{d}{dw} \left( \int_0^w (t-s)^{-\alpha} f^\prime(x,s)ds \right)^2 dw\,dx,
\end{multline}
for almost every $t \in [0,T]$. 

At this point, notice that an integration by parts yields, 
\begin{multline}\label{202504022004}
\int_\Omega \int_0^t (t-w)^\alpha \frac{d}{dw} \left( \int_0^w (t-s)^{-\alpha} f'(x,s) \, ds \right)^2 dw\,dx \\
= \int_\Omega \left[ (t-w)^\alpha \left( \int_0^w (t-s)^{-\alpha} f'(x,s) \, ds \right)^2 \right] \Bigg|_{w=0}^{w=t} dx \\
+ \alpha \int_\Omega \int_0^t (t-w)^{\alpha - 1} \left( \int_0^w (t-s)^{-\alpha} f'(x,s) \, ds \right)^2 dw\,dx,
\end{multline}
for almost every $t \in [0, T]$.

We now show that the boundary terms vanish. Since $(t - w)^\alpha|_{w = t} = 0$, we have
\begin{equation}\label{202504022005}
\int_\Omega \left| (t-w)^\alpha \left( \int_0^w (t-s)^{-\alpha} f'(x,s) \, ds \right)^2 \right|_{w=t} dx = 0.
\end{equation}

Secondly, using $0 \leq s \leq w$ implies $(t-s)^{-\alpha} \leq (t-w)^{-\alpha}$, and applying the classical Minkowski's inequality for integrals (see \cite[Theorem 202]{HaLiPo1} for details), we obtain
\begin{multline*}
\left(\int_\Omega \left| \int_0^w (t-s)^{-\alpha} f'(x,s) \, ds \right|^2 dx\right)^{1/2}
\leq \int_0^w (t-s)^{-\alpha} \|f'(s)\|_{L^2(\Omega)} \, ds \\
\leq (t-w)^{-\alpha} \int_0^w \|f'(s)\|_{L^2(\Omega)} \, ds,
\end{multline*}
which implies
\begin{multline}\label{202505122158}
\int_\Omega \left| (t-w)^\alpha \left( \int_0^w (t-s)^{-\alpha} f'(x,s) \, ds \right)^2 \right|_{w=0} dx \\
\leq \lim_{w\rightarrow0}\left[(t-w)^{-\alpha} \left(\int_0^w \|f'(s)\|_{L^2(\Omega)} \, ds\right)^2\right]= 0.
\end{multline}

Combining \eqref{202504022004}, \eqref{202504022005}, and \eqref{202505122158}, along with \eqref{202504022003}, we conclude the desired identity.

\end{proof}

We now establish a preliminary, weaker version of inequality \eqref{202505271020}, using the lemma above. This intermediate result is instrumental in proving the main results that follow. It is worth emphasizing that the proof of this proposition was inspired by \cite[Lemma 1]{Al1}.

\begin{proposition}\label{202504031004}
Let $0<\alpha<1$. If $f \in W^{1,2} \left(0,T; L^2(\Omega) \right)$, then
\begin{equation*}
cD_t^\alpha \left\Vert f(t) \right\Vert^2_{L^2(\Omega)} \leq 2 \left( cD_t^\alpha f(t), f(t) \right)_{L^2(\Omega)},
\end{equation*}
\end{proposition}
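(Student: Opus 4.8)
The plan is to compute the difference between the right-hand side and the left-hand side of the claimed inequality explicitly and then recognize it, via Lemma \ref{202504021931}, as a manifestly nonnegative quantity. The starting observation is that both fractional derivatives appearing in the statement can be written as Riemann--Liouville integrals of order $1-\alpha$ of explicit functions, using item $(iv)$ of Proposition \ref{202507221111}.

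First I would record that, since $f \in W^{1,2}(0,T;L^2(\Omega))$, item $(iv)$ of Proposition \ref{202507221111} gives $cD_t^\alpha f(t) = J_t^{1-\alpha} f'(t)$. Next I would argue that the scalar function $t \mapsto \|f(t)\|_{L^2(\Omega)}^2$ belongs to $W^{1,1}(0,T;\mathbb{R})$: indeed, $W^{1,2}(0,T;L^2(\Omega))$ embeds continuously into $C([0,T];L^2(\Omega))$ by the standard Bochner--Sobolev embedding, so $\|f(\cdot)\|_{L^2(\Omega)}$ is bounded and $\|f(\cdot)\|_{L^2(\Omega)}^2 \in L^1(0,T)$; moreover, by Proposition \ref{202504021854} its weak derivative equals $2(f'(\cdot),f(\cdot))_{L^2(\Omega)}$, which lies in $L^1(0,T)$ by Cauchy--Schwarz since $f' \in L^2(0,T;L^2(\Omega))$ and $f \in L^\infty(0,T;L^2(\Omega))$. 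Having secured this regularity, a second application of item $(iv)$ of Proposition \ref{202507221111} yields $cD_t^\alpha \|f(t)\|_{L^2(\Omega)}^2 = J_t^{1-\alpha}\big[2(f'(t),f(t))_{L^2(\Omega)}\big]$.

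With these two identities in hand, I would compute the difference
\begin{multline*}
2\big(cD_t^\alpha f(t), f(t)\big)_{L^2(\Omega)} - cD_t^\alpha \|f(t)\|_{L^2(\Omega)}^2 \\
= 2\int_\Omega \int_0^t \frac{(t-s)^{-\alpha}}{\Gamma(1-\alpha)}\, f'(x,s)\big[f(x,t) - f(x,s)\big]\, ds\, dx,
\end{multline*}
where one moves the inner product with the fixed vector $f(t)$ inside the Bochner integral defining $cD_t^\alpha f(t)$, and then applies Fubini--Tonelli to produce the iterated integral over $\Omega \times (0,t)$. This right-hand side is precisely the left-hand side of the identity established in Lemma \ref{202504021931}, which therefore rewrites the difference as
\[
\frac{\alpha}{\Gamma(1-\alpha)} \int_\Omega \int_0^t (t-w)^{\alpha-1}\left(\int_0^w (t-s)^{-\alpha} f'(x,s)\, ds\right)^2 dw\, dx.
\]
Since $0 < \alpha < 1$ forces $\Gamma(1-\alpha) > 0$ and $(t-w)^{\alpha-1} > 0$ for $w \in (0,t)$, while the inner expression is squared, the right-hand side is nonnegative for almost every $t$, which is exactly the claimed inequality.

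The Bochner and Fubini--Tonelli manipulations in the third paragraph are routine. The only genuinely delicate point is securing $\|f(\cdot)\|_{L^2(\Omega)}^2 \in W^{1,1}(0,T;\mathbb{R})$, since this is precisely the hypothesis that legitimizes applying item $(iv)$ of Proposition \ref{202507221111} to the scalar map $t \mapsto \|f(t)\|_{L^2(\Omega)}^2$; it is also the ``most delicate condition'' of Theorem \ref{202507181644} that this section is designed to verify. Once the embedding $W^{1,2}(0,T;L^2(\Omega)) \hookrightarrow C([0,T];L^2(\Omega))$ and Proposition \ref{202504021854} are invoked, this obstacle dissolves and the remainder is the algebraic rearrangement feeding into Lemma \ref{202504021931}.
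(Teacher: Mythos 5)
Your proposal is correct and follows essentially the same route as the paper: both express $cD_t^\alpha f$ and $cD_t^\alpha \|f\|_{L^2(\Omega)}^2$ via item $(iv)$ of Proposition \ref{202507221111} together with Proposition \ref{202504021854}, rearrange the difference by Fubini--Tonelli, and conclude nonnegativity from Lemma \ref{202504021931}. Your explicit verification that $\|f(\cdot)\|_{L^2(\Omega)}^2 \in W^{1,1}(0,T;\mathbb{R})$ is a welcome clarification of a step the paper handles implicitly, but it does not change the argument.
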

for almost every $t\in[0,T]$.
\begin{proof}
Firstly, since $f \in W^{1,2} \left(0,T; L^2(\Omega) \right)$, item $(iv)$ of Proposition \ref{202507221111} and Proposition \ref{202504021854} ensure that we can express the Caputo derivative as
\begin{equation}\label{202504021901}
cD_t^\alpha \left\Vert f(t) \right\Vert_{L^2(\Omega)} ^2 = J^{1-\alpha}_t \left[\frac{d}{dt} \left\Vert f(t) \right\Vert_{L^2(\Omega)} ^2\right] = 2 J^{1-\alpha}_t \big( f^\prime(t), f(t) \big)_{L^2(\Omega)},
\end{equation}
for almost every $t \in [0,T]$.

On the other hand, item $(iv)$ of Proposition \ref{202507221111} also ensures that
\begin{equation}\label{202504021902}
\left( cD_t^\alpha f(t), f(t) \right)_{L^2(\Omega)} = \left( J^{1-\alpha}_t f^\prime(t), f(t) \right)_{L^2(\Omega)},
\end{equation}
for almost every $t \in [0,T]$. 

Combining \eqref{202504021901} and \eqref{202504021902}, we obtain
\begin{multline*}
2 \left( cD_t^\alpha f(t),f(t) \right)_{L^2(\Omega)} - cD_t^\alpha \left\Vert f(t) \right\Vert^2_{L^2(\Omega)} \\
 = 2 \int_{\Omega} \int_0^t \frac{(t-s)^{-\alpha}}{\Gamma(1-\alpha)} f^\prime(x,s)\,ds\,f(x,t)\,dx - 2  \int_0^t \frac{(t-s)^{-\alpha}}{\Gamma(1-\alpha)} \int_{\Omega} f^\prime(x,s) f(x,s)\,dx\, ds,
\end{multline*}
and, applying the Fubini-Tonelli Theorem, we conclude that
\begin{multline*}
2 \left( cD_t^\alpha f(t),f(t) \right)_{L^2(\Omega)} - cD_t^\alpha \left\Vert f(t) \right\Vert^2_{L^2(\Omega)}  \\= 2 \int_\Omega  \int_0^t\frac{(t-s)^{-\alpha}}{\Gamma(1-\alpha)} f^\prime(x,s) \big[ f(x,t) - f(x,s) \big]\,ds\,dx,
\end{multline*}
for almost every $t \in [0,T]$. Therefore, it follows from Lemma \ref{202504021931} that
\begin{multline*}
2 \left( cD_t^\alpha f(t),f(t) \right)_{L^2(\Omega)} - cD_t^\alpha \left\Vert f(t) \right\Vert^2_{L^2(\Omega)}  \\=\dfrac{1}{\Gamma(1-\alpha)}\int_\Omega\int_0^t \alpha (t-w)^{\alpha-1} \left( \int_0^w (t-s)^{-\alpha} f'(x,s) \,ds \right)^2 dw\,dx\geq0,
\end{multline*}
for almost every $t \in [0,T]$.
\end{proof}

The previous proposition established the first fractional counterpart of Proposition~\ref{202504021854}. We now aim to refine the result presented in Proposition~\ref{202504031004}. To this end, we begin by proving a few auxiliary results.

\begin{proposition}\label{202504031146}
Let $0 < \alpha < 1$ and $f \in L^\infty(0,T;X)$ such that $D^\alpha_t f \in L^2(0,T;X)$. Then there exists a sequence $(\psi_j)_{j\in \mathbb{N}}\subset C^\infty([0,T];X)$ satisfying $\psi^{(k-1)}_j(0) = 0$ for all $j,k \in \mathbb{N}$, such that 
\begin{equation*}
\psi_j \to f \quad \text{and} \quad D^\alpha_t \psi_j \to D^\alpha_t f, \text{ in } L^2(0,T;X).
\end{equation*}
\end{proposition}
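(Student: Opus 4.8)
The plan is to reduce the approximation of $f$ to the approximation of its Riemann--Liouville derivative $g := D^\alpha_t f \in L^2(0,T;X)$, exploiting the fact that $J^\alpha_t$ is a bounded, smoothing operator. The first step is to establish the reconstruction identity $f = J^\alpha_t g$. Since $(0,T)$ has finite measure, $f \in L^\infty(0,T;X)$ implies $f \in L^p(0,T;X)$ for every $p$, in particular for some $p > 1/(1-\alpha)$; hence item $(i)$ of Proposition \ref{202507221111} gives $J^{1-\alpha}_t f(t)|_{t=0} = 0$. Moreover $J^{1-\alpha}_t f \in L^1(0,T;X)$ by Proposition \ref{202507221103}, and its weak derivative is (by definition of $D^\alpha_t$ for $0<\alpha<1$) precisely $D^\alpha_t f = g \in L^2 \subset L^1$; therefore $J^{1-\alpha}_t f \in W^{1,1}(0,T;X)$. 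The second identity in item $(ii)$ of Proposition \ref{202507221111} then yields $J^\alpha_t[D^\alpha_t f] = f - (t^{\alpha-1}/\Gamma(\alpha))\,(J^{1-\alpha}_s f|_{s=0}) = f$, the singular boundary term vanishing exactly because of the integrability afforded by the $L^\infty$ assumption.

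Next I would approximate $g$ in $L^2(0,T;X)$ by a sequence $(g_j)_{j\in\mathbb{N}} \subset C_c^\infty((0,T);X)$, which is possible by the density of test functions in $L^2(0,T;X)$ (approximate by simple functions and mollify with a scalar mollifier). Because each $g_j$ has support compactly contained in $(0,T)$, it vanishes identically in a neighborhood of $t=0$, so $g_j^{(k-1)}(0)=0$ for all $k \in \mathbb{N}$. I would then set $\psi_j := J^\alpha_t g_j$. By Corollary \ref{202507221039}, each $\psi_j$ belongs to $C^\infty([0,T];X)$ and satisfies $\psi_j^{(k-1)}(0) = 0$ for all $k$, so the sequence has precisely the structural properties demanded by the statement.

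Finally, the two convergences follow from the mapping properties of the fractional operators. For the first, Proposition \ref{202507221103} gives $\|\psi_j - f\|_{L^2(0,T;X)} = \|J^\alpha_t(g_j - g)\|_{L^2(0,T;X)} \leq (T^\alpha/\Gamma(\alpha+1))\,\|g_j - g\|_{L^2(0,T;X)} \to 0$. For the second, the first identity in item $(ii)$ of Proposition \ref{202507221111} (applicable since each $g_j \in L^1$) gives $D^\alpha_t \psi_j = D^\alpha_t[J^\alpha_t g_j] = g_j$, so that $D^\alpha_t \psi_j = g_j \to g = D^\alpha_t f$ in $L^2(0,T;X)$ as well.

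The main obstacle is the reconstruction step $f = J^\alpha_t D^\alpha_t f$. It is precisely the $L^\infty$ hypothesis (more than mere integrability, which would only give $p=1$ and leave the boundary term possibly nonzero) that forces the singular correction $t^{\alpha-1}/\Gamma(\alpha)$ to disappear, allowing $f$ itself---rather than $f$ plus a non-smooth, non-vanishing singular term---to be represented as $J^\alpha_t g$. This is what makes the entire smoothing scheme consistent: had the boundary term survived, the natural candidate approximants $J^\alpha_t g_j$ would converge to $J^\alpha_t g \neq f$, and no sequence of smooth functions vanishing to infinite order at $0$ could simultaneously recover $f$ and the singular profile.
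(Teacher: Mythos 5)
Your proposal is correct and follows essentially the same route as the paper's own proof: approximate $D^\alpha_t f$ by test functions in $L^2(0,T;X)$, define $\psi_j := J^\alpha_t \phi_j$, invoke Corollary \ref{202507221039} for the smoothness and vanishing of all derivatives at $t=0$, and combine the reconstruction identity $J^\alpha_t D^\alpha_t f = f$ (which holds because the $L^\infty$ hypothesis kills the singular boundary term via items $(i)$ and $(ii)$ of Proposition \ref{202507221111}) with $D^\alpha_t J^\alpha_t \phi_j = \phi_j$. If anything, you are slightly more explicit than the paper in verifying the hypothesis $J^{1-\alpha}_t f \in W^{1,1}(0,T;X)$ needed for the second identity in item $(ii)$, which is a welcome touch of rigor rather than a deviation.
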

\begin{proof}
Since $D^\alpha_t f \in L^2(0,T;X)$, there exists $(\phi_j)_{j\in \mathbb{N}} \subset C^\infty_c((0,T);X)$ with 
\begin{equation*}
\phi_j \to D^\alpha_t f, \quad \text{in }L^2(0,T;X).
\end{equation*}

Define $\psi_j(t) := J^\alpha_t \phi_j(t)$. Thanks to Corollary \ref{202507221039}, we have $\psi_j \in C^\infty([0,T]; X)$ with $\psi^{(k-1)}_j(0) = 0$, for all $j,k \in \mathbb{N}$. Moreover, Proposition \ref{202507221103} allows us to deduce that
\begin{equation}\label{202504031057}
\psi_j=J^\alpha_t \phi_j \to J^\alpha_t \big(D^\alpha_t f\big), \textrm{ in } L^2(0,T;X).
\end{equation}
However, since $f \in L^\infty(0,T; X)$, items $(i)$ and $(ii)$ of Proposition \ref{202507221111} ensure that
\begin{equation} \label{202507221142}
J^\alpha_t \big(D^\alpha_t f(t)\big) = f(t) - \frac{t^{\alpha-1}}{\Gamma(\alpha)} \left[ J^{1-\alpha}_s f(s) \right] \Big|_{s=0} = f(t),
\end{equation}
for almost every $t \in [0,T]$. Therefore, from \eqref{202504031057} and \eqref{202507221142}, we deduce the convergence
$$
\psi_j(t) \to f(t), \quad \text{in } L^2(0,T; X).
$$
Moreover, observe that, from item $(ii)$ of Proposition \ref{202507221111}, we also obtain
\begin{equation*}
D^\alpha_t \psi_j(t) = D^\alpha_t J^\alpha_t \phi_j(t) = \phi_j(t) \to D^\alpha_t f(t),\textrm{ in }L^2(0,T;X).
\end{equation*}

This completes the proof of the proposition.
\end{proof}

An analogous version of the previous proposition, but involving the Caputo fractional derivative, is the following:

\begin{corollary}
If $0<\alpha<1$ and $f \in C([0,T];X)$ is such that $cD_t^\alpha f \in L^2(0,T;X)$, then there exists $(\psi_j)_{j\in \mathbb{N}} \subset C^\infty([0,T];X)$ such that $\psi_j(0)=f(0)$, $\psi^{(k)}_j(0)=0$ for all $j,k \in \mathbb{N}$, and
\begin{equation*}
\psi_j \to f \quad \text{and} \quad cD_t^\alpha \psi_j \to cD_t^\alpha f, \text{ in } L^2(0,T;X).
\end{equation*}
\end{corollary}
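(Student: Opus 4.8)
The plan is to reduce this statement to its Riemann--Liouville counterpart, Proposition \ref{202504031146}, by subtracting the initial value. Since $0<\alpha<1$ we have $[\alpha]=1$, so the Caputo derivative takes the form $cD_t^\alpha f(t)=D_t^\alpha\big[f(t)-f(0)\big]$. This suggests introducing the auxiliary function $g(t):=f(t)-f(0)$, for which, by the very definition just recalled, $D_t^\alpha g = cD_t^\alpha f$. In this way the whole problem is transported into the setting already handled for the Riemann--Liouville derivative.

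First I would verify that $g$ meets the hypotheses of Proposition \ref{202504031146}. Because $f\in C([0,T];X)$ is continuous on a compact interval it is bounded, and hence $g\in L^\infty(0,T;X)$ with $g(0)=0$. By the identity noted above, $D_t^\alpha g = cD_t^\alpha f\in L^2(0,T;X)$ by hypothesis. Thus Proposition \ref{202504031146} applies to $g$ and produces a sequence $(\eta_j)_{j\in\mathbb{N}}\subset C^\infty([0,T];X)$ with $\eta_j^{(k-1)}(0)=0$ for all $j,k\in\mathbb{N}$ (equivalently $\eta_j^{(m)}(0)=0$ for every $m\in\mathbb{Z}_+$, in particular $\eta_j(0)=0$), such that $\eta_j\to g$ and $D_t^\alpha\eta_j\to D_t^\alpha g$ in $L^2(0,T;X)$.

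Finally, I would define $\psi_j(t):=\eta_j(t)+f(0)$ and check each required conclusion. Adding a constant keeps every $\psi_j$ in $C^\infty([0,T];X)$, and gives $\psi_j(0)=\eta_j(0)+f(0)=f(0)$ together with $\psi_j^{(k)}(0)=\eta_j^{(k)}(0)=0$ for all $k\in\mathbb{N}$, which is exactly the prescribed boundary data. The constant translation yields $\|\psi_j-f\|_{L^2(0,T;X)}=\|\eta_j-g\|_{L^2(0,T;X)}\to 0$, so $\psi_j\to f$ in $L^2(0,T;X)$. For the derivatives, since each $\psi_j$ is smooth with $\psi_j(0)=f(0)$, one has $cD_t^\alpha\psi_j=D_t^\alpha\big[\psi_j-\psi_j(0)\big]=D_t^\alpha\eta_j\to D_t^\alpha g=cD_t^\alpha f$ in $L^2(0,T;X)$, completing the list of claims.

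The argument is in essence a translation trick, so I do not expect a genuine obstacle. The only points requiring care are the bookkeeping of the pointwise boundary values --- confirming that shifting by the constant $f(0)$ converts the condition $\eta_j^{(k-1)}(0)=0$ for all $k\in\mathbb{N}$ into precisely $\psi_j(0)=f(0)$ and $\psi_j^{(k)}(0)=0$ for all $k\in\mathbb{N}$ --- and the observation that the Caputo derivative is insensitive to the added constant, so that the convergence of the Riemann--Liouville derivatives of $\eta_j$ transfers verbatim to the Caputo derivatives of $\psi_j$.
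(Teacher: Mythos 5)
Your proposal is correct and coincides with the paper's own proof: the paper likewise sets $g(t)=f(t)-f(0)$, applies Proposition \ref{202504031146} to $g$, and defines $\psi_j(t)=\tilde\psi_j(t)+f(0)$. You have merely spelled out the bookkeeping (boundary values and the invariance of the Caputo derivative under adding a constant) that the paper leaves implicit in ``Then the result follows.''
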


\begin{proof}
If we define $g(t)=f(t)-f(0)$, then $g\in L^\infty(0,T;X)$ and $D_t^\alpha g=cD_t^\alpha f \in L^2(0,T;X)$. Thus, Proposition \ref{202504031146} ensures the existence of a sequence $({\tilde\psi}_j)_{j\in \mathbb{N}}$ that belongs to $C^\infty([0,T];X)$ such that ${\tilde\psi}^{(k-1)}_j(0) = 0$ for all $j,k \in \mathbb{N}$, and  
\begin{equation*}
{\tilde\psi}_j \to g \quad \text{and} \quad D^\alpha_t {\tilde\psi}_j \to D^\alpha_t g \text{ in } L^2(0,T;X).
\end{equation*}

Define $\psi_j(t)={\tilde\psi}_j(t)+f(0)$. Then the result follows.
\end{proof}

At this point in our work, we need to extend the result of Alsaedi et al. in \cite[Lemma 1]{AlAhKi} so that it can be applied to vector-valued functions. To this end, we first recall the notions of Hölder continuous and fractionally continuously differentiable functions, as well as the relationship between them (see \cite{CarFe05} for more details).
\begin{definition}\label{202507230946} Given $0 < \beta < 1$, we define:
\begin{itemize}
\item[(i)] $H^\beta([0,T];X) = \big \{ f\in C([0,T];X): \text{ there exists } M>0 \text{ such that }$ \\\hspace*{\fill} $ \big\|f(t)-f(s)\big\|_X\leq M{|t-s|^\beta}, \ \forall \, t,s\in[0,T] \big\};$\vspace*{0.2cm}
\item[(ii)] $C^\beta([0,T];X) = \big \{ f\in C([0,T];X): cD_t^\beta f\in C([0,T];X) \big\}.$
\end{itemize}
\end{definition}

\begin{proposition}[{\cite[Corollary 23]{CarFe05}}]\label{202507231000}
  Assume that $0<\gamma< 1$. Then $C^{\gamma}([t_0,t_1],X)\subset H^{\gamma}([t_0,t_1],X)$.
\end{proposition}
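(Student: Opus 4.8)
The plan is to recover $f$ from its Caputo derivative by fractional integration and then to estimate the resulting difference of two fractional integrals directly. Throughout, although the cited propositions are anchored at the base point $0$, each of them is translation invariant, so I freely use the analogues based at $t_0$, writing $J_{t_0,t}^{\gamma}$, $D_{t_0,t}^{\gamma}$, and $cD_{t_0,t}^{\gamma}$ accordingly. Fix $f\in C^{\gamma}([t_0,t_1];X)$, so that $f$ and $h:=cD_{t_0,t}^{\gamma}f$ are both continuous; in particular $M:=\sup_{t\in[t_0,t_1]}\|h(t)\|_X<\infty$.

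First I would establish the representation
\[
f(t)-f(t_0)=J_{t_0,t}^{\gamma}\big[cD_{t_0,t}^{\gamma}f\big](t),\qquad t\in[t_0,t_1].
\]
Setting $g:=f-f(t_0)$, one has $g(t_0)=0$ and $cD_{t_0,t}^{\gamma}f=D_{t_0,t}^{\gamma}g$ by the definition of the Caputo derivative. Since $g$ is continuous, hence bounded, it lies in $L^p$ for every $p$; choosing $p>1/(1-\gamma)$, Proposition \ref{202507221156} gives $J_{t_0,t}^{1-\gamma}g\in C([t_0,t_1];X)$, while its weak derivative is $D_{t_0,t}^{\gamma}g=h$, continuous by hypothesis. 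Thus $J_{t_0,t}^{1-\gamma}g\in W^{1,1}$, so item $(ii)$ of Proposition \ref{202507221111} applies; the boundary term vanishes by item $(i)$ since $J_{t_0,t}^{1-\gamma}g\big|_{t=t_0}=0$, which yields the displayed identity.

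Next I would fix $t_0\le s<t\le t_1$ and split the difference of fractional integrals as
\[
f(t)-f(s)=\frac{1}{\Gamma(\gamma)}\left[\int_{t_0}^{s}\big[(t-r)^{\gamma-1}-(s-r)^{\gamma-1}\big]\,h(r)\,dr+\int_{s}^{t}(t-r)^{\gamma-1}\,h(r)\,dr\right].
\]
Taking norms and using $\|h(r)\|_X\le M$ reduces the matter to bounding two scalar integrals of power functions. Because $0<\gamma<1$ the exponent $\gamma-1$ is negative, so for $r<s<t$ one has $(t-r)^{\gamma-1}\le(s-r)^{\gamma-1}$; this fixes the sign of the first integrand and lets me replace the absolute value by $(s-r)^{\gamma-1}-(t-r)^{\gamma-1}$. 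Both integrals are then evaluated explicitly through the substitutions $u=s-r$ and $u=t-r$, producing a combination of $(s-t_0)^{\gamma}$, $(t-t_0)^{\gamma}$, and $(t-s)^{\gamma}$.

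The last step is to collect these terms and observe that $(s-t_0)^{\gamma}-(t-t_0)^{\gamma}\le 0$, so that after cancellation the estimate simplifies to $\|f(t)-f(s)\|_X\le \tfrac{2M}{\Gamma(\gamma+1)}(t-s)^{\gamma}$, which is precisely $\gamma$-Hölder continuity with the uniform constant $2M/\Gamma(\gamma+1)$. I expect the main obstacle to be this endpoint estimate of the difference of two fractional integrals: one must track the sign of $(t-r)^{\gamma-1}-(s-r)^{\gamma-1}$ and evaluate the resulting power-function integrals exactly, since it is here that the Hölder exponent $\gamma$ (and not $1$) emerges. By contrast, the representation formula, although conceptually the crux, follows quickly from the inversion identities already available, and the remaining bookkeeping is elementary.
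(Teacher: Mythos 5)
Your proof is correct. Note first that the paper itself does not prove this proposition: it is imported wholesale from \cite[Corollary 23]{CarFe05}, so there is no internal argument to compare against; your proposal supplies a complete, self-contained proof using only tools already stated in the paper, which is the natural route one would expect the cited reference to follow. The two halves both check out: the representation $f(t)-f(t_0)=J^{\gamma}_{t_0,t}\big[cD^{\gamma}_{t_0,t}f\big](t)$ is legitimately obtained from items $(i)$ and $(ii)$ of Proposition \ref{202507221111} once you verify $J^{1-\gamma}_{t_0,t}g\in W^{1,1}$ — and your verification is sound, since $g\in L^{\infty}\subset L^{p}$ for $p>1/(1-\gamma)$ gives continuity of $J^{1-\gamma}_{t_0,t}g$ via Proposition \ref{202507221156}, while its weak derivative is exactly $h=cD^{\gamma}_{t_0,t}f\in C([t_0,t_1];X)\subset L^{1}$; and the endpoint computation is exact: the first integral evaluates to $\gamma^{-1}\big[(s-t_0)^{\gamma}-(t-t_0)^{\gamma}+(t-s)^{\gamma}\big]$, the second to $\gamma^{-1}(t-s)^{\gamma}$, and discarding the nonpositive term yields $\|f(t)-f(s)\|_X\leq \tfrac{2M}{\Gamma(\gamma+1)}(t-s)^{\gamma}$ as you claim. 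Only one cosmetic point deserves a line in a polished write-up: item $(ii)$ of Proposition \ref{202507221111} gives the inversion identity only for almost every $t$, so to assert it for \emph{all} $t\in[t_0,t_1]$ you should add that both sides are continuous ($f$ by hypothesis, $J^{\gamma}_{t_0,t}h$ because $h$ is bounded — e.g.\ by Proposition \ref{202507221156}), whence a.e.\ equality upgrades to everywhere. This is a remark, not a gap; the argument stands.
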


\begin{theorem} \label{teogenalsaedi}
Let $\alpha \in (0,1)$ and let $H$ be a Hilbert space with inner product $(\cdot,\cdot)_H$. If one of the following conditions is satisfied:
\begin{itemize}
\item[(i)] $f_1 \in C([0,T];H)$, the Riemann--Liouville fractional derivative of $f_1$ of order $\alpha$ exists at $t \in (0,T]$, and $f_2 \in H^\beta([0,T];H)$, with $0 < \alpha < \beta < 1$;

\item[(ii)] $f_1 \in H^\beta([0,T];H)$ and $f_2 \in H^\delta([0,T];H)$ for some $0 < \beta < 1$, $0 < \delta < 1$, with $0 < \alpha < \beta + \delta$, and the Riemann--Liouville fractional derivatives of both $f_1$ and $f_2$ of order $\alpha$ exist at $t \in (0,T]$,
\end{itemize}
then the following identity holds at each $t \in (0,T]$:
\begin{multline*}
D^\alpha_t \big( f_1(t),f_2(t) \big)_H = \big( D^\alpha_t f_1(t) , f_2(t) \big)_H + \big( f_1(t), D^\alpha_t f_2(t) \big)_H \\
-\frac{\alpha}{\Gamma(1-\alpha)} \int_0^t \frac{\big( f_1(s)-f_1(t), f_2(s)-f_2(t) \big)_H}{(t-s)^{\alpha+1}} ds - \frac{\big( f_1(t),f_2(t) \big)_H}{\Gamma(1-\alpha)t^\alpha},
\end{multline*}
\end{theorem}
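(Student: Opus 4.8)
The plan is to compute the left-hand side straight from the definition $D^\alpha_t(f_1,f_2)_H=\frac{d}{dt}J^{1-\alpha}_t(f_1,f_2)_H$, using as the sole organizing device the bilinear ``polarization'' of the integrand about the endpoint $t$: for fixed $t$ and $s\in(0,t)$,
\[
(f_1(s),f_2(s))_H = (f_1(s),f_2(t))_H + (f_1(t),f_2(s))_H - (f_1(t),f_2(t))_H + C(s,t),
\]
where $C(s,t):=(f_1(s)-f_1(t),f_2(s)-f_2(t))_H$. Applying $J^{1-\alpha}_t$ and pulling the $s$-constant vectors $f_i(t)$ out of the inner product, this splits $J^{1-\alpha}_t(f_1,f_2)_H$ into four pieces: $(J^{1-\alpha}_tf_1(t),f_2(t))_H$, $(f_1(t),J^{1-\alpha}_tf_2(t))_H$, the explicit power term $-(f_1(t),f_2(t))_H\,t^{1-\alpha}/\Gamma(2-\alpha)$, and the cross integral $\mathcal C(t):=\frac{1}{\Gamma(1-\alpha)}\int_0^t(t-s)^{-\alpha}C(s,t)\,ds$.

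The next step is to differentiate in $t$. Because $f_1,f_2$ are only H\"older (or, in case (i), merely continuous), none of the first three pieces is separately differentiable; the key observation is that their difference quotients contain singular contributions of the type $(J^{1-\alpha}_tf_1(t),h^{-1}[f_2(t+h)-f_2(t)])_H$ and its partner $(h^{-1}[f_1(t+h)-f_1(t)],J^{1-\alpha}_tf_2(t))_H$, which blow up as $h\to0$ but cancel pairwise (together with the analogous term coming from the power piece), leaving only the regular parts. This is exactly where I would use the hypotheses that $J^{1-\alpha}_tf_1$ and $J^{1-\alpha}_tf_2$ are differentiable at $t$, i.e.\ that $D^\alpha_tf_1$ and $D^\alpha_tf_2$ exist: the surviving regular parts produce $(D^\alpha_tf_1(t),f_2(t))_H$ and $(f_1(t),D^\alpha_tf_2(t))_H$, while the power term contributes the boundary factor $-(f_1(t),f_2(t))_H/(\Gamma(1-\alpha)t^\alpha)$ through $\frac{d}{dt}\big[t^{1-\alpha}/\Gamma(2-\alpha)\big]=t^{-\alpha}/\Gamma(1-\alpha)$.

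For the last piece $\mathcal C(t)$ I would proceed as in the integration-by-parts computation of Lemma~\ref{202504021931}: differentiating the kernel gives $\partial_t(t-s)^{-\alpha}=-\alpha(t-s)^{-\alpha-1}$, which is precisely what generates the singular cross integral $-\frac{\alpha}{\Gamma(1-\alpha)}\int_0^t(t-s)^{-\alpha-1}C(s,t)\,ds$ appearing in the statement. Convergence of this integral, and the dominated-convergence justification for differentiating under the integral sign, both follow from the Cauchy--Schwarz estimate $|C(s,t)|\le\|f_1(s)-f_1(t)\|_H\,\|f_2(s)-f_2(t)\|_H$, which is $O(|t-s|^{\beta+\delta})$ in case (ii) and $O(|t-s|^{\beta})$ in case (i) (using that $f_1\in C([0,T];H)$ is bounded on the compact interval). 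Hence the integrand is $O(|t-s|^{\beta+\delta-\alpha-1})$, respectively $O(|t-s|^{\beta-\alpha-1})$, which is integrable near $s=t$ \emph{exactly} because $\beta+\delta>\alpha$ (resp.\ $\alpha<\beta$); this is the structural reason the two hypotheses are stated as they are. Assembling the regular parts with $\mathcal C'(t)$ yields the claimed identity.

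The step I expect to be the main obstacle is making the pairwise cancellation of the singular difference-quotient contributions fully rigorous in the low-regularity regime, together with the differentiation under the singular integral. When both H\"older exponents are small (case (ii) with $\beta,\delta\le\alpha$), neither $f_1$ nor $f_2$ is regular enough for a naive term-by-term Leibniz argument, and only the \emph{combination} of the four pieces is differentiable; one must therefore argue with the full difference quotient and control the blow-ups via the H\"older majorant above, leaning on the assumed existence of the Riemann--Liouville derivatives (equivalently, on the Marchaud representation $\frac{\alpha}{\Gamma(1-\alpha)}\int_0^t\frac{f_i(t)-f_i(s)}{(t-s)^{\alpha+1}}\,ds=D^\alpha_tf_i(t)-f_i(t)/(\Gamma(1-\alpha)t^\alpha)$, valid precisely where $D^\alpha_tf_i$ exists). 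Cases (i) and (ii) then differ only in which factor carries the decay that renders $C(s,t)$ integrable, so once the cancellation lemma is in place both follow from the same computation; I would also invoke Proposition~\ref{202507231000} to phrase the H\"older hypotheses interchangeably with the fractional-differentiability ones where convenient.
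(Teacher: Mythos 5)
Your proposal is in substance the paper's own proof: the same polarization identity about the evaluation point $t$ (identity \eqref{uv}), the same four-way split into two derivative terms, a power term, and a cross integral, and the same treatment of the cross term (differentiating the kernel to produce $-\alpha(t-s)^{-\alpha-1}$, a Mean Value Theorem majorant, dominated convergence, and the Cauchy--Schwarz/H\"older bound $|C(s,t)|\lesssim (t-s)^{\beta+\delta}$, which is exactly where $\alpha<\beta+\delta$, resp.\ $\alpha<\beta$, enters; the paper additionally isolates a boundary contribution $\mathcal H_1$ over $[t,t+\varepsilon]$ and kills it by a Beta-function computation, a piece your ``differentiate under the integral sign'' step silently contains). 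The one genuine difference is organizational, and it matters: you decompose the \emph{function} $t\mapsto J^{1-\alpha}_t\big(f_1(t),f_2(t)\big)_H$ with the polarization basepoint moving along with $t$, and must then prove that the singular difference-quotient contributions such as $\big(J^{1-\alpha}_tf_1(t),h^{-1}[f_2(t+h)-f_2(t)]\big)_H$ cancel pairwise --- the step you yourself flag as the main obstacle, and which your proposal does not actually carry out. The paper sidesteps this entirely: it inserts \eqref{uv} with the \emph{fixed} basepoint $t$ into both integrals of the difference quotient \eqref{duv}, so that $f_1(t)$ and $f_2(t)$ are frozen vectors in all four resulting terms $\mathcal I_1,\dots,\mathcal I_4$, each of which then converges on its own: $\mathcal I_2,\mathcal I_3$ by the assumed existence of $D^\alpha_tf_1,D^\alpha_tf_2$, $\mathcal I_4$ by direct computation, and $\mathcal I_1$ by precisely your H\"older estimates. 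No cancellation lemma is ever needed. Your cancellation claim is in fact true --- the coefficients of $h^{-1}\Delta f_2$ arising from your first, third, and fourth pieces sum to $J^{1-\alpha}_tf_1(t)-f_1(t)\,t^{1-\alpha}/\Gamma(2-\alpha)-\bigl(J^{1-\alpha}_tf_1(t)-f_1(t)\,t^{1-\alpha}/\Gamma(2-\alpha)\bigr)=0$, and symmetrically for $\Delta f_1$ --- so your route can be completed, but the fixed-basepoint arrangement renders the hardest part of your plan unnecessary. One caveat: your parenthetical appeal to the Marchaud representation for each $f_i$ separately is unsafe in case (ii) when $\beta,\delta\le\alpha$, since then $\int_0^t(t-s)^{-\alpha-1}\|f_i(t)-f_i(s)\|_H\,ds$ need not converge even though $D^\alpha_tf_i(t)$ exists; only the bilinear combination $C(s,t)$ is integrable near $s=t$, which is precisely why the identity is stated for the pairing rather than factorwise.
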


\begin{proof}
We focus on the proof of item $(ii)$, as the argument for $(i)$ follows in a similar way. By definition, it holds that
\begin{multline} \label{duv}
D^\alpha_t \big( f_1(t), f_2(t) \big)_H =  \frac{1}{\Gamma(1-\alpha)} \left\{\frac{d}{dt}  \left[\int_0^t (t-s)^{-\alpha} \big( f_1(s), f_2(s) \big)_H\, ds\right]\right\} \\
\qquad\quad= \frac{1}{\Gamma(1-\alpha)} \left\{\lim_{\varepsilon \to 0} \frac{1}{\varepsilon} \left[ \int_0^{t+\varepsilon} (t+\varepsilon-s)^{-\alpha} \big( f_1(s), f_2(s) \big)_H\, ds\right.\right.\\\left.\left. - \int_0^{t} (t-s)^{-\alpha} \big( f_1(s), f_2(s) \big)_H\, ds \right]\right\}.
\end{multline}

Now notice that for $t,s \in [0,T]$ we have
\begin{multline} \label{uv}
\big( f_1(s), f_2(s) \big)_H = \big( f_1(s) - f_1(t), f_2(s)-f_2(t) \big)_H \\+ \big( f_1(s),f_2(t) \big)_H + \big( f_1(t),f_2(s) \big)_H - \big( f_1(t),f_2(t) \big)_H.
\end{multline}

Inserting \eqref{uv} into identity \eqref{duv}, and assuming that the limit is taken from the right of $0$ (i.e., $\lim_{\varepsilon \to 0^+}$), we obtain an expression that can be rearranged as the sum of four pairs of terms. For clarity, we denote these terms by 
\begin{multline*}\frac{1}{\Gamma(1-\alpha)} \left\{ \lim_{t \to 0^+} \frac{1}{\varepsilon} \left[\int_0^{t+\varepsilon} (t+\varepsilon -s)^{-\alpha} \big(f_1(s)-f_1(t), f_2(s)-f_2(t) \big)_H ds \right.\right.\\ 
\left.\left. - \int_0^{t} (t -s)^{-\alpha} \big(f_1(s)-f_1(t), f_2(s)-f_2(t) \big)_H\, ds \right]\right\}=:\mathcal{I}_1(t),\end{multline*}
\begin{multline*}\frac{1}{\Gamma(1-\alpha)} \left\{\lim_{\varepsilon \to 0^+} \frac{1}{\varepsilon} \left[ \int_0^{t+\varepsilon} (t+\varepsilon-s)^{-\alpha} \big( f_1(s),f_2(t) \big)_H\, ds\right.\right.\\ \left.\left. - \int_0^{t} (t-s)^{-\alpha} \big( f_1(s),f_2(t) \big)_H ds\, \right]\right\}=:\mathcal{I}_2(t),\end{multline*}
\begin{multline*}\frac{1}{\Gamma(1-\alpha)} \left\{\lim_{\varepsilon \to 0^+} \frac{1}{\varepsilon} \left[ \int_0^{t+\varepsilon} (t+\varepsilon-s)^{-\alpha} \big( f_1(t),f_2(s) \big)_H\, ds\right.\right.\\ \left.\left. - \int_0^{t} (t-s)^{-\alpha} \big( f_1(t),f_2(s) \big)_H ds\, \right]\right\}=:\mathcal{I}_3(t),\end{multline*}
and finally
\begin{multline*}\frac{1}{\Gamma(1-\alpha)} \left\{\lim_{\varepsilon \to 0^+} \frac{1}{\varepsilon} \left[ \int_0^{t+\varepsilon} (t+\varepsilon-s)^{-\alpha}\big( f_1(t),f_2(t) \big)_H\, ds\right.\right.\\\left.\left. - \int_0^{t} (t-s)^{-\alpha}\big( f_1(t),f_2(t) \big)_H\, ds \right]\right\}=:\mathcal{I}_4(t),\end{multline*}
for every $t \in (0,T)$ such that the four limits above exist. In fact, we shall prove that each of these limits does indeed exist.

We begin with $\mathcal{I}_2(t)$ and $\mathcal{I}_3(t)$, for which it is straightforward to verify that
\begin{equation*}
\mathcal{I}_2(t)=\big( D^\alpha_t f_1(t), f_2(t) \big)_H\qquad\textrm{ and }\qquad \mathcal{I}_3(t)=\big(  f_1(t), D^\alpha_t f_2(t) \big)_H.
\end{equation*}

For $\mathcal{I}_4(t)$, since we have 
\begin{multline*}
\lim_{\varepsilon \to 0^+} \frac{1}{\varepsilon} \left[ \int_0^{t+\varepsilon} (t+\varepsilon-s)^{-\alpha} ds - \int_0^{t} (t-s)^{-\alpha} ds \right] \\
= \lim_{\varepsilon \to 0^+} \frac{1}{\varepsilon} \left[ \frac{(t+\varepsilon)^{1-\alpha}}{1-\alpha} - \frac{t^{1-\alpha}}{1-\alpha}\right]=t^{-\alpha},
\end{multline*}
we obtain
$$\mathcal{I}_4(t)=\frac{\big( f_1(t),f_2(t) \big)_H}{\Gamma(1-\alpha)t^{\alpha}}.$$

Finally, let us address the most complex term, which requires a more careful analysis. First, we rewrite $\Gamma(1-\alpha) \mathcal{I}_1(t) = \mathcal{H}_1(t) + \mathcal{H}_2(t)$, where
$$\mathcal{H}_1(t)=\lim_{\varepsilon \to 0^+}\left[\frac{1}{\varepsilon} \int_t^{t+\varepsilon} (t+\varepsilon-s)^{-\alpha} \big( f_1(s)-f_1(t), f_2(s)-f_2(t) \big)_H\, ds  \right]$$
and
$$\mathcal{H}_2(t)=\lim_{\varepsilon \to 0^+}  \int_0^{t} \left[\frac{(t+\varepsilon-s)^{-\alpha} - (t-s)^{-\alpha}}{\varepsilon} \right] \big( f_1(s)-f_1(t), f_2(s)-f_2(t) \big)_H\, ds$$

To deal with $\mathcal{H}_1(t)$, we apply the change of variables $w = t + \varepsilon - s$, which gives
\begin{equation*}
\mathcal{H}_1(t)=\lim_{\varepsilon \to 0} \frac{1}{\varepsilon} \int_0^\varepsilon w^{-\alpha} \big( f_1(t+\varepsilon-w) -f_1(t), f_2(t+\varepsilon-w)-f_2(t)\big)_H\, dw.
\end{equation*}

Then, the Cauchy–Schwarz inequality, along with the Hölder regularity, ensures that
\begin{multline*}
\mathcal{H}_1(t)\leq\lim_{\varepsilon \to 0} \int_0^\varepsilon \frac{w^{-\alpha}}{\varepsilon} (\varepsilon-w)^{\beta+\delta}\,dw=\lim_{\varepsilon \to 0} \int_0^1 \frac{(\varepsilon h)^{-\alpha}}{\varepsilon} (\varepsilon-\varepsilon h)^{\beta+\delta} \varepsilon\,dh 
\\= \lim_{\varepsilon \to 0} \varepsilon^{\beta+\delta-\alpha} B(1-\alpha, \beta + \delta+1) =0,
\end{multline*}
where, above, $B(\cdot, \cdot)$ stands for the Beta function.

On the other hand, to handle $\mathcal{H}_2(t)$, let us first denote by $g_\varepsilon : [0,t] \rightarrow \mathbb{R}$ the integrand function that defines it, given by
$$g_\varepsilon(s):= \left[\frac{(t+\varepsilon-s)^{-\alpha} - (t-s)^{-\alpha}}{\varepsilon}\right] \, \big( f_1(s)-f_1(t), f_2(s)-f_2(t) \big)_H.$$

Note that  
\begin{equation*}
\lim_{\varepsilon \rightarrow 0^+} g_\varepsilon(s) = -\alpha(t-s)^{-\alpha-1} \big( f_1(s)-f_1(t), f_2(s)-f_2(t) \big)_H, \quad \text{for a.e. } s \in [0,t].
\end{equation*}
Moreover, we have
\begin{equation*}
\left\vert g_\varepsilon(s) \right\vert \leq \left\vert \frac{(t+\varepsilon-s)^{-\alpha}-(t-s)^{-\alpha}}{\varepsilon} \right\vert \, (t-s)^{\beta+\delta}, \quad \text{for a.e. } s \in [0,t],
\end{equation*}
thanks to the Hölder continuity of $f_1$ and $f_2$.

Consider now the function $h:[0,\varepsilon] \rightarrow \mathbb{R}$ given by $h(w) = (t+w-s)^{-\alpha}$. By the Mean Value Theorem, there exists $\theta \in (0,\varepsilon)$ such that  
\begin{multline*}
\left\vert \frac{(t+\varepsilon-s)^{-\alpha}-(t-s)^{-\alpha}}{\varepsilon} \right\vert = \left\vert \frac{h(\varepsilon)-h(0)}{\varepsilon} \right\vert = \left\vert h'(\theta) \right\vert \\= \alpha(t+\theta - s )^{-\alpha-1} \leq \alpha (t-s)^{-\alpha-1}
\end{multline*}
Therefore,
\begin{equation*}
\left\vert g_\varepsilon (t) \right\vert \leq (t-s)^{\beta+\delta -\alpha-1},
\end{equation*}
for almost every $s \in [0,t].$

The aforementioned properties are enough for us to apply the Dominated Convergence Theorem in order to obtain
\begin{equation*}
\mathcal{H}_2(t)=\lim_{\varepsilon\rightarrow0^+}\int_0^t g_\varepsilon (s) ds=\int_0^t -\alpha(t-s)^{-\alpha-1} \big( f_1(s) -f_1(t), f_2(s)-f_2(t) \big)_H\, ds,
\end{equation*}
for every $t\in(0,T)$. 

By combining the results obtained for $\mathcal{I}_1(t), \mathcal{I}_2(t), \mathcal{I}_3(t)$, and $\mathcal{I}_4(t)$ in \eqref{duv}, and noting that the limit from the left of $0$ can be treated analogously, we conclude the proof.
\end{proof}

Before presenting our main result, we recall one final theorem from Gorenflo et al. \cite{GoLuYa1} and generalize it to vector-valued functions. In that work, for $0 < \gamma < 1$, the authors consider the space $W^{\gamma,2}(0,T;\mathbb{R})$, which denotes the standard fractional Sobolev space (see \cite{NePaVa1} for more details), consisting of functions $f \in L^2(0,T;\mathbb{R})$ such that
$$
\big[f\big]_{W^{\gamma,2}(0,T;\mathbb{R})} := \left(\int_0^T \int_0^T \dfrac{|f(t) - f(s)|^2}{|t - s|^{2\gamma + 1}}\, ds\, dt \right)^{1/2} < \infty.
$$
Endowed with the norm
\begin{equation*}
\|f\|_{W^{\gamma,2}(0,T;\mathbb{R})} := \|f\|_{L^2(0,T;\mathbb{R})} + \big[f\big]_{W^{\gamma,2}(0,T;\mathbb{R})},
\end{equation*}
this space becomes a Banach space. In that context, they prove Theorem 2.1, which is the result we intend to employ in our final theorem. It states that there exist constants $m, M > 0$ such that
\begin{equation}\label{202504092059}
m\|J_t^\gamma f\|_{W^{\gamma,2}(0,T)} \leq \|f\|_{L^2(0,T)} \leq M \|J_t^\gamma f\|_{W^{\gamma,2}(0,T)},
\end{equation}
for every $f \in L^2(0,T)$.

Now let us consider the fractional Sobolev-Bochner space $W^{\gamma,2}(0,T;X)$ of all functions in $L^2(0,T;X)$ such that
\begin{equation}\label{202507231108}
\big[f\big]_{W^{\gamma,2}(0,T;X)} := \left(\int_0^T \int_0^T \dfrac{\|f(t) - f(s)\|_{X}^2}{|t - s|^{2\gamma + 1}}\, ds\, dt \right)^{1/2} < \infty.
\end{equation}
Like before, endowed with the norm
\begin{equation}\label{202504092058}
\|f\|_{W^{\gamma,2}(0,T;X)} := \sqrt{\|f\|^2_{L^2(0,T;X)} + \big[f\big]^2_{W^{\gamma,2}(0,T;X)}},
\end{equation}
it becomes a Banach space. This allows us to prove the following result.

\begin{corollary}\label{202504092126} There exist $m,M>0$ such that
$$m \|J_t^\gamma f\|_{W^{\gamma,2}(0,T;L^2(\Omega))} \leq \|f\|_{L^2(0,T;L^2(\Omega))} \leq M \|J_t^\gamma f\|_{W^{\gamma,2}(0,T;L^2(\Omega))},$$
for every $f \in L^2(0,T;L^2(\Omega))$.
\end{corollary}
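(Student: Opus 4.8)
The plan is to reduce the vector-valued estimate to the scalar inequality \eqref{202504092059} by slicing $f$ in the spatial variable and invoking Fubini--Tonelli. The starting point is the isometric identification $L^2(0,T;L^2(\Omega))\cong L^2\big((0,T)\times\Omega\big)$, which lets me regard $f$ as a scalar function $f(x,t)$ with $f(x,\cdot)\in L^2(0,T)$ for almost every $x\in\Omega$. Since the scalar norm constants will turn out to be independent of the slice, the whole estimate should then follow by integrating the one-dimensional inequality over $\Omega$.

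First I would record the commutation identity
\[
\big(J_t^\gamma f\big)(x,t)=J_t^\gamma\big(f(x,\cdot)\big)(t),\qquad\text{for a.e. }(x,t)\in\Omega\times(0,T),
\]
which asserts that the Bochner fractional integral, evaluated pointwise in $x$, coincides with the scalar fractional integral of the $x$-section. As $J_t^\gamma f\in L^2(0,T;L^2(\Omega))$ by Proposition \ref{202507221103}, both sides define elements of $L^2\big((0,T)\times\Omega\big)$, and their equality follows from Fubini--Tonelli applied to the convolution kernel $(t-s)^{\gamma-1}$; the only point to verify is the integrability permitting the interchange of the $x$-integration with the $s$-integration, which is immediate from $f\in L^2\big((0,T)\times\Omega\big)$.

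With this identity in hand, I would rewrite each ingredient of the norm \eqref{202504092058} as an integral over $\Omega$ of the corresponding scalar quantity. By Tonelli,
\[
\|f\|_{L^2(0,T;L^2(\Omega))}^2=\int_\Omega\|f(x,\cdot)\|_{L^2(0,T)}^2\,dx,
\]
while, inserting the commutation identity together with $\|g\|_{L^2(\Omega)}^2=\int_\Omega|g(x)|^2\,dx$ into both the $L^2$-term and the Gagliardo seminorm \eqref{202507231108} (all integrands being nonnegative, so Tonelli applies), I obtain
\[
\|J_t^\gamma f\|_{W^{\gamma,2}(0,T;L^2(\Omega))}^2=\int_\Omega\bigl\|J_t^\gamma\bigl(f(x,\cdot)\bigr)\bigr\|_{W^{\gamma,2}(0,T)}^2\,dx.
\]

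Finally, I would apply the scalar estimate \eqref{202504092059} to the section $g_x:=f(x,\cdot)\in L^2(0,T)$ for a.e. $x$. The crucial observation is that the constants $m,M$ in \eqref{202504092059} depend only on $\gamma$ and $T$, hence are uniform in $x$; squaring the scalar inequality and integrating over $\Omega$ then yields
\[
m^2\|J_t^\gamma f\|_{W^{\gamma,2}(0,T;L^2(\Omega))}^2\le\|f\|_{L^2(0,T;L^2(\Omega))}^2\le M^2\|J_t^\gamma f\|_{W^{\gamma,2}(0,T;L^2(\Omega))}^2,
\]
and taking square roots gives the claim with the same constants. The main obstacle is the careful justification of the commutation identity and of the slicewise Fubini--Tonelli reductions --- in particular, ensuring that $x\mapsto\bigl\|J_t^\gamma\bigl(f(x,\cdot)\bigr)\bigr\|_{W^{\gamma,2}(0,T)}^2$ is measurable, so that integration over $\Omega$ is meaningful. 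Once these measurability and interchange issues are settled, the uniformity of $m,M$ in $x$ renders the remaining steps automatic.
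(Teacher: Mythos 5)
Your proof is correct and takes essentially the same route as the paper: slice $f$ in the spatial variable, apply the scalar estimate \eqref{202504092059} to $f(x,\cdot)$ with constants uniform in $x$, square, integrate over $\Omega$, and conclude via Fubini--Tonelli. One minor caveat (shared implicitly by the paper's own proof): the scalar norm in \eqref{202504092059} is the sum $\|\cdot\|_{L^2}+[\cdot]$ while the vector-valued norm \eqref{202504092058} is Hilbertian, so your claimed identity $\|J_t^\gamma f\|_{W^{\gamma,2}(0,T;L^2(\Omega))}^2=\int_\Omega\|J_t^\gamma(f(x,\cdot))\|_{W^{\gamma,2}(0,T)}^2\,dx$ only holds up to equivalence of norms, and the final constants pick up a harmless factor of $\sqrt{2}$ rather than being literally the same.
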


\begin{proof} Let $f \in L^2(0,T;L^2(\Omega))$. Observe that, for almost every $x \in \Omega$, we have $f(x,\cdot) \in L^2(0,T)$. Therefore, from \eqref{202504092059}, it follows that
$$
m \|J_t^\gamma f(x,\cdot)\|_{W^{\gamma,2}(0,T)} \leq \|f(x,\cdot)\|_{L^2(0,T)} \leq M \|J_t^\gamma f(x,\cdot)\|_{W^{\gamma,2}(0,T)},
$$
for almost every $x \in \Omega$, with constants $m$ and $M$ independent of $x$. Squaring both sides of the inequality, then integrating it over $\Omega$ and applying the Fubini–Tonelli Theorem, we obtain the desired result.
\end{proof}

We have now established all the necessary results to prove our main theorems. It is worth emphasizing that the next theorem, which we consider the most challenging in the paper, is built from a sequence of ideas that are carefully detailed in the proof, in order to make the argument as clear and organized as possible.

\begin{theorem} \label{teodesdrl}
Assume that $0 < \alpha < 1$ and that $f \in L^\infty(0,T; L^2(\Omega))$ satisfies $D^\alpha_t f \in L^2(0,T; L^2(\Omega))$. Then $D^\alpha_t \|f\|_{L^2(\Omega)}^2 \in L^1(0,T)$ and
\begin{equation} \label{desa}
D^\alpha_t \|f\|_{L^2(\Omega)}^2 \leq 2 \big(D^\alpha_t f(t), f(t)\big)_{L^2(\Omega)},
\end{equation}
for almost every $t \in (0,T)$.
\end{theorem}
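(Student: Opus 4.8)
The plan is to transfer the problem to a smooth approximation, apply the Riemann--Liouville product rule of Theorem \ref{teogenalsaedi} to obtain an exact identity, and then pass to the limit. By Proposition \ref{202504031146} there is $(\psi_j)\subset C^\infty([0,T];L^2(\Omega))$ with $\psi_j^{(k-1)}(0)=0$ for all $j,k\in\mathbb{N}$, such that $\psi_j\to f$ and $D^\alpha_t\psi_j\to D^\alpha_t f$ in $L^2(0,T;L^2(\Omega))$; setting $\phi_j:=D^\alpha_t\psi_j$, Proposition \ref{202507221111} gives $\psi_j=J^\alpha_t\phi_j$ and $\phi_j\to D^\alpha_t f$ in $L^2(0,T;L^2(\Omega))$. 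Each $\psi_j$ is Lipschitz in time, hence lies in $H^\beta([0,T];L^2(\Omega))$ for every $\beta\in(0,1)$ (Definition \ref{202507230946}), so I apply item (ii) of Theorem \ref{teogenalsaedi} with $f_1=f_2=\psi_j$ and $\beta=\delta\in(\alpha/2,1)$ to obtain, for every $t\in(0,T]$,
\[
D^\alpha_t\|\psi_j(t)\|_{L^2(\Omega)}^2 = 2\big(D^\alpha_t\psi_j(t),\psi_j(t)\big)_{L^2(\Omega)} - \frac{\alpha}{\Gamma(1-\alpha)}\,G_j(t) - \frac{\|\psi_j(t)\|_{L^2(\Omega)}^2}{\Gamma(1-\alpha)\,t^\alpha},
\]
where $G_j(t):=\int_0^t (t-s)^{-\alpha-1}\|\psi_j(s)-\psi_j(t)\|_{L^2(\Omega)}^2\,ds\ge0$. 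The two last terms are nonpositive, so the inequality already holds for each $\psi_j$; the real work is to pass to the limit and to show that the limiting right-hand side lies in $L^1(0,T)$.

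The first term is routine: by Cauchy--Schwarz and the two $L^2$-convergences it tends to $2(D^\alpha_t f,f)_{L^2(\Omega)}$ in $L^1(0,T)$, and this limit is integrable since $f, D^\alpha_t f\in L^2(0,T;L^2(\Omega))$. For the Gagliardo term I use $\int_0^T G_j\,dt=\tfrac12[\psi_j]^2_{W^{\alpha/2,2}(0,T;L^2(\Omega))}$ and first prove $\psi_j\to f$ in the fractional Sobolev--Bochner space $W^{\alpha/2,2}(0,T;L^2(\Omega))$: writing $\psi_j=J^{\alpha/2}_t(J^{\alpha/2}_t\phi_j)$ via the semigroup property, the norm equivalence of Corollary \ref{202504092126} with $\gamma=\alpha/2$ together with the $L^2$-boundedness of $J^{\alpha/2}_t$ (Proposition \ref{202507221103}) gives
\[
\|\psi_j-\psi_k\|_{W^{\alpha/2,2}}\le \tfrac1m\big\|J^{\alpha/2}_t(\phi_j-\phi_k)\big\|_{L^2}\le C\,\|\phi_j-\phi_k\|_{L^2},
\]
so $(\psi_j)$ is Cauchy in $W^{\alpha/2,2}$ and its limit must be $f$. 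Then, expanding $\|a\|^2-\|b\|^2=(a-b,a+b)_{L^2(\Omega)}$ and applying Cauchy--Schwarz to the Gagliardo double integral, I obtain $\int_0^T|G_j-G|\,dt\le\tfrac12[\psi_j-f]_{W^{\alpha/2,2}}\,[\psi_j+f]_{W^{\alpha/2,2}}\to0$, where $G(t):=\int_0^t(t-s)^{-\alpha-1}\|f(s)-f(t)\|_{L^2(\Omega)}^2\,ds$; in particular $G\ge0$ and $G\in L^1(0,T)$.

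The singular boundary term cannot be handled in $L^1$ under mere $L^2$-convergence, so I close the argument in $\mathcal{D}'(0,T)$. Since $f\in L^\infty(0,T;L^2(\Omega))$ and $\alpha<1$, the bound $\int_0^T t^{-\alpha}\,dt<\infty$ shows $t^{-\alpha}\|f(t)\|_{L^2(\Omega)}^2\in L^1(0,T)$. From $\psi_j\to f$ in $L^2(0,T;L^2(\Omega))$ one gets $\|\psi_j\|_{L^2(\Omega)}^2\to\|f\|_{L^2(\Omega)}^2$ in $L^1(0,T)$; testing against $\chi\in C_c^\infty(0,T)$, whose support avoids $t=0$ where $t^{-\alpha}$ is bounded, shows the boundary term converges to $-\Gamma(1-\alpha)^{-1}t^{-\alpha}\|f\|_{L^2(\Omega)}^2$ in $\mathcal{D}'(0,T)$. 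Moreover, this $L^1$-convergence and the $L^1$-boundedness of $J^{1-\alpha}_t$ (Proposition \ref{202507221103}) give $J^{1-\alpha}_t\|\psi_j\|_{L^2(\Omega)}^2\to J^{1-\alpha}_t\|f\|_{L^2(\Omega)}^2$ in $L^1(0,T)$, whence $D^\alpha_t\|\psi_j\|_{L^2(\Omega)}^2\to D^\alpha_t\|f\|_{L^2(\Omega)}^2$ in $\mathcal{D}'(0,T)$. Letting $j\to\infty$ in the displayed identity then yields
\[
D^\alpha_t\|f(t)\|_{L^2(\Omega)}^2 = 2\big(D^\alpha_t f(t),f(t)\big)_{L^2(\Omega)} - \frac{\alpha}{\Gamma(1-\alpha)}\,G(t) - \frac{\|f(t)\|_{L^2(\Omega)}^2}{\Gamma(1-\alpha)\,t^\alpha}
\]
in $\mathcal{D}'(0,T)$. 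Since every term on the right belongs to $L^1(0,T)$, the distribution on the left is represented by that $L^1$ function (equivalently $J^{1-\alpha}_t\|f\|_{L^2(\Omega)}^2\in W^{1,1}(0,T)$), and discarding the two nonpositive terms yields \eqref{desa}.

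The step I expect to be the main obstacle is the pair of ``defect'' terms produced by the product rule. The Gagliardo term $G_j$ is the genuinely delicate one: it is not continuous under $L^2$-convergence of $\psi_j$, and the precise tool that rescues it is Corollary \ref{202504092126}, which upgrades $L^2$-convergence of $\phi_j$ to $W^{\alpha/2,2}$-convergence of $\psi_j$. The singular term is obstructive only in $L^1$, and passing to the distributional limit away from $t=0$ sidesteps it entirely. The conceptual payoff is that the nonnegativity of these two defect terms simultaneously certifies the integrability $D^\alpha_t\|f\|_{L^2(\Omega)}^2\in L^1(0,T)$ and produces the inequality.
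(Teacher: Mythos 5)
Your proof is correct, and although it uses the same toolkit as the paper — the approximating sequence of Proposition \ref{202504031146}, the product rule of Theorem \ref{teogenalsaedi}, and the norm equivalence of Corollary \ref{202504092126} — the architecture is genuinely different. The paper never writes the product rule for a single approximant: it polarizes, applying Theorem \ref{teogenalsaedi} to the difference and sum $u_{j,k}$, $v_{j,k}$ of two approximants in order to prove that $\big(D^\alpha_t\|\psi_j\|^2_{L^2(\Omega)}\big)_j$ is Cauchy in $L^1(0,T)$. That choice forces an $L^1$ estimate of the singular term $t^{-\alpha}\big(u_{j,k},v_{j,k}\big)_{L^2(\Omega)}$, which costs a trichotomy over $0<\alpha<1/2$, $\alpha=1/2$, $1/2<\alpha<1$ invoking three distinct mapping theorems for $J^\alpha_t$ (Propositions \ref{202507231029}, \ref{202507231041} and \ref{202507221156}); and since the Cauchy argument only identifies the limit function $g=D^\alpha_t\|f\|^2_{L^2(\Omega)}$, the inequality itself must be imported separately from the Alikhanov-type Proposition \ref{202504031004} (itself resting on Lemma \ref{202504021931}) via a.e.\ convergent subsequences. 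You instead apply the product rule diagonally to each $\psi_j$, obtaining an exact identity with two sign-definite defect terms, and pass to the limit term by term in two different topologies: $L^1$ for the Gagliardo defect — your upgrade of $\phi_j\to D^\alpha_t f$ in $L^2$ to $\psi_j\to f$ in $W^{\alpha/2,2}(0,T;L^2(\Omega))$ via the semigroup property and Corollary \ref{202504092126} is precisely the mechanism behind the paper's estimate of $\mathcal{L}_{jk}$ — and mere $\mathcal{D}'(0,T)$ for the singular defect, whose integrability in the limit you recover from $f\in L^\infty(0,T;L^2(\Omega))$ rather than from mapping theorems, thereby sidestepping the trichotomy entirely. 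What the paper's route buys is genuine $L^1$-norm convergence $D^\alpha_t\|\psi_j\|^2_{L^2(\Omega)}\to D^\alpha_t\|f\|^2_{L^2(\Omega)}$; what yours buys is lighter bookkeeping (no polarization, no case analysis in $\alpha$, no need for Proposition \ref{202504031004}) and a strictly stronger conclusion, namely the exact a.e.\ representation
\[
D^\alpha_t\|f(t)\|^2_{L^2(\Omega)}=2\big(D^\alpha_t f(t),f(t)\big)_{L^2(\Omega)}-\frac{\alpha}{\Gamma(1-\alpha)}\int_0^t\frac{\|f(s)-f(t)\|^2_{L^2(\Omega)}}{(t-s)^{\alpha+1}}\,ds-\frac{\|f(t)\|^2_{L^2(\Omega)}}{\Gamma(1-\alpha)\,t^\alpha},
\]
with every term on the right in $L^1(0,T)$, together with the byproduct regularity $f\in W^{\alpha/2,2}(0,T;L^2(\Omega))$ — neither of which the paper's inequality-only statement records.
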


\begin{proof} To clarify the structure of the proof, we divide it into four steps. In the first step, we construct an auxiliary sequence $(\xi_j)_{j \in \mathbb{N}} \subset C^\infty([0,T];L^2(\Omega))$ such that 
$$
J^{1-\alpha}_t \left\| J^\alpha_t \xi^\prime_j \right\|_{L^2(\Omega)}^2 \to J^{1-\alpha}_t \left\| f \right\|_{L^2(\Omega)}^2, \quad \text{in } L^1(0,T).
$$
In the second step, we explain why it is necessary to prove that the sequence
$$
\left(D^\alpha_t \left\| J^\alpha_t \xi^\prime_j \right\|_{L^2(\Omega)}^2\right)_{j\in\mathbb{N}}
$$
is Cauchy in $L^1(0,T;\mathbb{R})$. In the third step, we show that the existence of such a limit allows us to conclude that $D^\alpha_t \| f \|_{L^2(\Omega)}^2$ exists and belongs to $\in L^1(0,T;\mathbb{R})$. Finally, in the fourth step, we prove inequality \eqref{desa}.
\vspace*{0.3cm}

\noindent\textbf{Step 1: Construction of an approximating sequence.}

We begin by recalling that Proposition \ref{202504031146} ensures the existence of a sequence $(\psi_j)_{j\in \mathbb{N}} \subset C^\infty([0,T];L^2(\Omega))$ such that $\psi^{(k-1)}_j(0) = 0$ for all $j,k \in \mathbb{N}$, and
\begin{equation}\label{202504071759}
\psi_j \to f \quad \text{and} \quad D^\alpha_t \psi_j \to D^\alpha_t f, \quad \text{in } L^2(0,T;L^2(\Omega)).
\end{equation}

Define the auxiliary sequence $\xi_j(t) := J^{1-\alpha}_t \psi_j(t)$ for all $j \in \mathbb{N}$. Observe that Corollary \ref{202507221039} ensures that  $(\xi_j)_{j \in \mathbb{N}} \subset C^\infty([0,T];L^2(\Omega))$ and satisfies $\xi_j^{(k-1)}(0) = 0$ for all $j, k \in \mathbb{N}$. Moreover, Proposition \ref{202507221103} guarantees the convergence
\begin{equation*}
\xi_j \to J^{1-\alpha}_t f, \quad \text{in } L^2(0,T;L^2(\Omega)).
\end{equation*}

Consequently, using the second convergence in \eqref{202504071759}, we obtain
\begin{equation} \label{psiltodf}
\xi^\prime_j = \frac{d}{dt}\Big[J^{1-\alpha}_t \psi_j\Big] = D^\alpha_t \psi_j \to D^\alpha_t f, \quad \text{in } L^2(0,T;L^2(\Omega)).
\end{equation}

Since $f \in L^\infty(0,T;L^2(\Omega))$, items $(i)$ and $(ii)$ of Proposition \ref{202507221111} ensure that $J^\alpha_t D^\alpha_t f(t) = f(t)$ for almost every $t \in [0,T]$. This identity, together with Proposition \ref{202507221103}, applied to \eqref{psiltodf}, allows us to conclude that
\begin{equation} \label{202504071804}
J^\alpha_t \xi^\prime_j \to J^\alpha_t D^\alpha_t f = f, \quad \text{in } L^2(0,T;L^2(\Omega)).
\end{equation}

Therefore, from \eqref{202504071804} and, once again using Proposition \ref{202507221103}, we obtain
\begin{equation} \label{convl1}
J^{1-\alpha}_t \left\| J^\alpha_t \xi^\prime_j \right\|_{L^2(\Omega)}^2 \to J^{1-\alpha}_t \left\| f \right\|_{L^2(\Omega)}^2, \quad \text{in } L^1(0,T;\mathbb{R}).\vspace*{0.1cm}
\end{equation}

\noindent\textbf{Step 2: The Cauchy sequence in $L^1(0,T)$.}

In this step, we aim to prove the existence of the weak derivative of $J_t^{1-\alpha} \|f\|_{L^2(\Omega)}^2$ and to show that it belongs to $L^1(0,T;\mathbb{R})$. This is the technical obstacle mentioned in the introduction.

To this end, we begin by observing that, by a recursive argument already used in the previous step, it holds that
\begin{equation}\label{202505092152}
J^\alpha_t \xi_j^\prime(t) = J^\alpha_t D^\alpha_t \psi_j(t) = \psi_j(t),
\end{equation}
for almost every $t \in [0,T]$. Combining this with the fact that $\psi_j(0) = 0$ for all $j \in \mathbb{N}$, and applying item $(iv)$ of Proposition \ref{202507221111} together with Proposition~\ref{202504021854}, we deduce
\begin{equation*}
D^\alpha_t \left\| J^\alpha_t \xi^\prime_j(t) \right\|_{L^2(\Omega)}^2 = \frac{d}{dt} \left[J^{1-\alpha}_t \left\| \psi_j(t) \right\|_{L^2(\Omega)}^2 \right]=2J^{1-\alpha}_t \left( \psi_j^\prime(t) , \psi_j(t) \right)_{L^2(\Omega)},
\end{equation*}
for almost every $t \in [0,T]$. Hence, 
$$D^\alpha_t \left\| J^\alpha_t \xi^\prime_j(\cdot) \right\|_{L^2(\Omega)}^2\in L^1(0,T).$$

Therefore, for any $\theta \in C_c^{\infty}((0,T);\mathbb{R})$, it follows that
\begin{equation}\label{202504081139}
\int_0^T J^{1-\alpha}_t \left\| J^\alpha_t \xi^\prime_j(t) \right\|_{L^2(\Omega)}^2 \theta'(t) dt = -\int_0^T \left[D^{\alpha}_t \left\| J^\alpha_t \xi^\prime_j(t) \right\|_{L^2(\Omega)}^2\right] \theta(t) dt.
\end{equation}
and from \eqref{convl1}
\begin{equation}\label{202504081140}
\int_0^T J^{1-\alpha}_t \left\| J^\alpha_t \xi^\prime_j(t) \right\|_{L^2(\Omega)}^2 \theta^\prime(t) dt \to \int_0^T J^{1-\alpha}_t \left\| f(t) \right\|_{L^2(\Omega)}^2 \theta^\prime(t) dt.
\end{equation}

Now, if we show that
\begin{equation}\label{202504081141}
\left( D^\alpha_t \left\| J^\alpha_t \xi_j' \right\|_{L^2(\Omega)}^2 \right)_{j \in \mathbb{N}}
\end{equation}
is a Cauchy sequence in the space $L^1(0,T; \mathbb{R})$, then there exists a function $g \in L^1(0,T; \mathbb{R})$ such that
\begin{equation}\label{202504081142}
D^\alpha_t \left\| J^\alpha_t \xi_j' \right\|_{L^2(\Omega)}^2 \to g, \quad \text{in } L^1(0,T; \mathbb{R}).
\end{equation}
Consequently, combining \eqref{202504081139}, \eqref{202504081140}, and \eqref{202504081142}, we obtain
$$
\int_0^T g(t) \theta(t) \, dt = - \int_0^T J^{1-\alpha}_t \left| f(t) \right|^2 \theta'(t) \, dt,
$$
for every $\theta \in C_c^\infty((0,T); \mathbb{R})$. In other words, the weak fractional derivative $D^\alpha_t \left\| f(t) \right\|_{L^2(\Omega)}^2$ exists and coincides with $g(t)$ for almost every $t \in [0,T]$.\vspace*{0.3cm}

\noindent\textbf{Step 3: Existence of the fractional derivative.}

The previous step shows that, to complete the first part of the proof, it remains to verify that \eqref{202504081141} forms a Cauchy sequence in $L^1(0,T; \mathbb{R})$. We now turn our attention to this task. To begin, observe that for every $j, k \in \mathbb{N}$,
\begin{equation}\label{202404081539}
\left\| J^\alpha_t \xi'_j(t) \right\|_{L^2(\Omega)}^2 - \left\| J^\alpha_t \xi'_k(t)  \right\|_{L^2(\Omega)}^2 
 =  \big( J^\alpha_t \xi'_j(t) - J^\alpha_t \xi'_k(t)  , J^\alpha_t \xi'_j(t) + J^\alpha_t \xi'_k(t)  \big)_{L^2(\Omega)},
\end{equation}
for almost every $t \in [0,T]$. To simplify the notation, we temporarily define
\begin{equation}\label{202507231111}
u_{j,k}(t) := J^\alpha_t \xi'_j(t) - J^\alpha_t \xi'_k(t) \quad \text{and} \quad v_{j,k}(t) := J^\alpha_t \xi'_j(t) + J^\alpha_t \xi'_k(t).
\end{equation}

From item $(iii)$ of Proposition \ref{202507221111}, we know that the functions $cD_t^\alpha u_{j,k}$ and $cD_t^\alpha v_{j,k}$ belong to $C([0,T]; L^2(\Omega))$, which implies, by item $(ii)$ of Definition \ref{202507230946}, that $u_{j,k}, v_{j,k} \in C^\alpha([0,T]; L^2(\Omega))$. Then, it follows from Proposition \ref{202507231000} that both functions, in fact, belong to $H^\gamma([0,T]; L^2(\Omega))$ for every $0 < \gamma < \alpha$. 

Therefore, applying item $(ii)$ of Theorem \ref{teogenalsaedi} to equation \eqref{202404081539}, we obtain

\begin{multline*}
D^\alpha_t \big\| J^\alpha_t \xi^\prime_j(t) \big\|_{L^2(\Omega)}^2 - D^\alpha_t \big\| J^\alpha_t \xi^\prime_k(t)  \big\|_{L^2(\Omega)}^2 = D^\alpha_t \big( u_{j,k}(t),v_{j,k}(t)  \big)_{L^2(\Omega)} \\
= \big( D^\alpha_t u_{j,k}(t), v_{j,k}(t) \big)_{L^2(\Omega)} + \big( u_{j,k}(t) , D^\alpha_t v_{j,k}(t) \big)_{L^2(\Omega)} - \frac{\big( u_{j,k}(t),v_{j,k}(t) \big)_{L^2(\Omega)}}{\Gamma(1-\alpha)t^{\alpha}} \\
- \frac{\alpha}{\Gamma(1-\alpha)} \int_0^t  \frac{\big( u_{j,k}(s) -u_{j,k}(t), v_{j,k}(s)-v_{j,k}(t)  \big)_{L^2(\Omega)}}{(t-s)^{\alpha+1}} ds.
\end{multline*}
for every $t \in (0,T]$. In other words,
\begin{equation*} 
D^\alpha_t \big\| J^\alpha_t \xi^\prime_j(t) \big\|_{L^2(\Omega)}^2 - D^\alpha_t \big\| J^\alpha_t \xi^\prime_k(t)  \big\|_{L^2(\Omega)}^2
= \mathcal{I}_{jk}(t)+\mathcal{J}_{jk}(t)+\mathcal{K}_{jk}(t)+\mathcal{L}_{jk}(t),
\end{equation*}
for every $t \in (0,T]$, where
\begin{equation*}\mathcal{I}_{jk}(t):=\int_\Omega \left[ \xi'_j(x,t) - \xi'_k(x,t) \right] \cdot J^\alpha_t \left[  \xi'_j(x,t) +  \xi'_k(x,t)  \right]\,dx ,\end{equation*}
\begin{equation*}\mathcal{J}_{jk}(t):= \int_\Omega J^\alpha_t\left[ \xi'_j(x,t) - \xi'_k(x,t) \right] \cdot \left[  \xi'_j(x,t) +  \xi'_k(x,t)  \right]\,dx,\end{equation*}
\begin{equation*}\mathcal{K}_{jk}(t):= - \frac{1}{\Gamma(1-\alpha) t^\alpha} \int_\Omega J^\alpha_t \left[ \xi'_j(x,t) - \xi'_k(x,t) \right] \cdot J^\alpha_t \left[ \xi'_j(x,t) + \xi'_k(x,t) \right]\,dx,\end{equation*}
and
\begin{multline*}- \frac{\alpha}{\Gamma(1-\alpha)} \int_0^t\int_\Omega  \left\{\frac{J^\alpha_s \left[ \xi'_j(x,s) - \xi'_k(x,s) \right]-J^\alpha_t \left[ \xi'_j(x,t) - \xi'_k(x,t) \right]}{(t-s)^{\frac{\alpha+1}{2}}} \right\} \\
\times \left\{\frac{J^\alpha_s \left[ \xi'_j(x,s) + \xi'_k(x,s) \right]-J^\alpha_t \left[ \xi'_j(x,t) + \xi'_k(x,t) \right]}{(t-s)^{\frac{\alpha+1}{2}}} \right\} dx\,ds=:\mathcal{L}_{jk}(t).\end{multline*}

Let us now analyze each of these terms. First, observe that Hölder's inequality yields
$$
|\mathcal{I}_{jk}(t)| \leq \big\|\xi_j^\prime(t) - \xi_k^\prime(t)\big\|_{L^2(\Omega)} \, \big\|J_t^\alpha[\xi_j^\prime(t) - \xi_k^\prime(t)]\big\|_{L^2(\Omega)},
$$
for every $t \in (0,T]$. Then, we integrate with respect to the variable $t$ over $[0,T]$ and apply Hölder's inequality once again, thereby obtaining
$$
\|\mathcal{I}_{jk}\|_{L^1(0,T;\mathbb{R})} \leq \|\xi_j^\prime - \xi_k^\prime\|_{L^2(0,T;L^2(\Omega))} \cdot \|J_t^\alpha[\xi_j^\prime - \xi_k^\prime]\|_{L^2(0,T;L^2(\Omega))}.
$$
Hence, it follows from \eqref{psiltodf} and \eqref{202504071804} that the values $\big(\|\mathcal{I}_{jk}\|_{L^1(0,T;\mathbb{R})}\big)_{j,k}$ tends to zero as $j$ and $k$ increase. A similar argument shows that the same holds for $\big(\|\mathcal{J}_{jk}\|_{L^1(0,T;\mathbb{R})}\big)_{j,k}$.

Regarding the third term, we apply Cauchy--Schwarz inequality, to obtain
\begin{multline}\label{202504081729}
|\mathcal{K}_{jk}(t)| \leq \int_\Omega \left\{ \frac{J^\alpha_t \left| \xi^\prime_j(x,t) - \xi^\prime_k(x,t) \right| }{[\Gamma(1-\alpha) t^\alpha]^\frac{1}{2}} \right\}\left\{ \frac{ J^\alpha_t \left| \xi^\prime_j(x,t) + \xi^\prime_k(x,t) \right| }{[\Gamma(1-\alpha) t^\alpha]^\frac{1}{2}} \right\} \,dx \\\leq\left\{\dfrac{\big\|J_t^\alpha\big[\xi^\prime_j(t) - \xi^\prime_k(t)\big]\big\|_{L^2(\Omega)}}{[\Gamma(1-\alpha) t^\alpha]^\frac{1}{2}}\right\}
\left\{\dfrac{\big\|J_t^\alpha\big[\xi^\prime_j(t) + \xi^\prime_k(t)\big]\big\|_{L^2(\Omega)}}{[\Gamma(1-\alpha) t^\alpha]^\frac{1}{2}}\right\},
\end{multline}
for every $t \in (0,T]$. 
\begin{itemize}
\item[(i)] Assume for a moment that $0 < \alpha \leq 1/2$. In this case, by integrating with respect to the variable $t$ over $[0,T]$ and applying Hölder's inequality, we obtain from \eqref{202504081729} that
\begin{multline}\label{202504091844}
\qquad\quad\|\mathcal{K}_{jk}\|_{L^1(0,T;\mathbb{R})} \leq \left( \int_0^T \dfrac{t^{-\alpha} \left\| J_t^\alpha\left[\xi^\prime_j(t) - \xi^\prime_k(t)\right] \right\|^2_{L^2(\Omega)}}{\Gamma(1-\alpha)}\,dt \right)^{\frac{1}{2}} \\
\times \left( \int_0^T \dfrac{t^{-\alpha} \left\| J_t^\alpha\left[\xi^\prime_j(t) + \xi^\prime_k(t)\right] \right\|^2_{L^2(\Omega)}}{\Gamma(1-\alpha)}\,dt \right)^{\frac{1}{2}}.
\end{multline}
\begin{itemize}
\item[(a)] If $0<\alpha<1/2$, by applying H\"{o}lder's inequality with exponents $p = 1/(2\alpha)$ and $p' = 1/(1 - 2\alpha)$ to each integral on the right-hand side of \eqref{202504091844}, we obtain
\begin{multline*}
\qquad\qquad\quad\|\mathcal{K}_{jk}\|_{L^1(0,T;\mathbb{R})} \\\leq \dfrac{1}{\Gamma(1-\alpha)} \left( \int_0^T t^{-\frac{1}{2}}\,dt \right)^{2\alpha} \left\| J_t^\alpha\left[\xi^\prime_j - \xi^\prime_k\right] \right\|_{L^{\frac{2}{1 - 2\alpha}}(0,T;L^2(\Omega))} \\
\times \left\| J_t^\alpha\left[\xi^\prime_j + \xi^\prime_k\right] \right\|_{L^{\frac{2}{1 - 2\alpha}}(0,T;L^2(\Omega))}.
\end{multline*}
Since Proposition \ref{202507231029} ensures that the operator 
$$
J^\alpha_t : L^2(0,T;L^2(\Omega)) \to L^{\frac{2}{1 - 2\alpha}}(0,T;L^2(\Omega)),
$$
is a linear bounded operator, we finally obtain
\begin{equation}\label{202504091840}
\|\mathcal{K}_{jk}\|_{L^1(0,T;\mathbb{R})} \leq C_\alpha \left\| \xi^\prime_j - \xi^\prime_k \right\|_{L^2(0,T;L^2(\Omega))} \left\| \xi^\prime_j + \xi^\prime_k \right\|_{L^2(0,T;L^2(\Omega))}.
\end{equation}
Therefore, it follows from \eqref{psiltodf} that $\big(\|\mathcal{K}_{jk}\|_{L^1(0,T;\mathbb{R})}\big)_{j,k}$ tends to zero as $j$ and $k$ increase.\vspace*{0.2cm}

\item[(b)] If $\alpha=1/2$, by applying H\"{o}lder's inequality with exponents $p = 3/2$ and $p' = 3$ to each integral on the right-hand side of \eqref{202504091844}, we obtain
\begin{multline*}
\qquad\qquad\quad\|\mathcal{K}_{jk}\|_{L^1(0,T;\mathbb{R})} \\\leq \dfrac{1}{\Gamma(1/2)} \left( \int_0^T t^{-\frac{3}{4}}\,dt \right)^{\frac{2}{3}} \left\| J_t^\alpha\left[\xi^\prime_j - \xi^\prime_k\right] \right\|_{L^{6}(0,T;L^2(\Omega))} \\
\times \left\| J_t^\alpha\left[\xi^\prime_j + \xi^\prime_k\right] \right\|_{L^{6}(0,T;L^2(\Omega))}.
\end{multline*}
Since Proposition \ref{202507231041} ensures that the operator 
$$
J^\frac{1}{2}_t : L^2(0,T;L^2(\Omega)) \to L^{6}(0,T;L^2(\Omega)),
$$
is a linear bounded operator, we finally obtain
\begin{equation*}
\|\mathcal{K}_{jk}\|_{L^1(0,T;\mathbb{R})} \leq C \left\| \xi^\prime_j - \xi^\prime_k \right\|_{L^2(0,T;L^2(\Omega))} \left\| \xi^\prime_j + \xi^\prime_k \right\|_{L^2(0,T;L^2(\Omega))}.
\end{equation*}
Therefore, it follows from \eqref{psiltodf} that $\big(\|\mathcal{K}_{jk}\|_{L^1(0,T;\mathbb{R})}\big)_{j,k}$ tends to zero as $j$ and $k$ increase.
\vspace*{0.2cm}

\item[(ii)] When $1/2<\alpha<1$, recall that Proposition \ref{202507221156} ensures that
$$
J^\alpha_t : L^2(0,T;L^2(\Omega)) \to C([0,T];L^2(\Omega)),
$$
is a bounded and linear operator. Therefore, it is not difficult to proceed analogously to the previous items and conclude that the sequence $\big(\|\mathcal{K}_{jk}\|_{L^1(0,T;\mathbb{R})}\big)_{j,k}$ tends to zero as $j$ and $k$ increase.\vspace*{0.2cm}

\end{itemize}
\end{itemize}

To conclude that \eqref{202504081141} defines a Cauchy sequence, it remains to estimate the $L^1(0,T; \mathbb{R})$ norm of the fourth term, $\mathcal{L}_{jk}$. For every $t, s \in (0,T)$ with $0 < s < t$, by H\"older's inequality we have
\begin{multline*}
\left|\int_\Omega \left\{\frac{J^\alpha_s \left[ \xi'_j(x,s) - \xi'_k(x,s) \right] - J^\alpha_t \left[ \xi'_j(x,t) - \xi'_k(x,t) \right]}{(t-s)^{\frac{\alpha+1}{2}}} \right\} \right. \\
\left. \times \left\{\frac{J^\alpha_s \left[ \xi'_j(x,s) + \xi'_k(x,s) \right] - J^\alpha_t \left[ \xi'_j(x,t) + \xi'_k(x,t) \right]}{(t-s)^{\frac{\alpha+1}{2}}} \right\} dx \right|
\end{multline*}
is bounded above by
\begin{multline*}
\dfrac{\left\|J^\alpha_s \left( \xi'_j(s) - \xi'_k(s) \right) - J^\alpha_t \left( \xi'_j(t) - \xi'_k(t) \right)\right\|_{L^2(\Omega)}}{(t-s)^{\frac{\alpha+1}{2}}} \\
\times \dfrac{\left\|J^\alpha_s \left( \xi'_j(s) + \xi'_k(s) \right) - J^\alpha_t \left( \xi'_j(t) + \xi'_k(t) \right)\right\|_{L^2(\Omega)}}{(t-s)^{\frac{\alpha+1}{2}}},
\end{multline*}
so that, by integrating with respect to the variable $s$ over $[0,t]$, then with respect to $t$ over $[0,T]$, and applying Hölder's inequality twice in succession, we obtain the next inequality (the full expression can be found by recalling identity \ref{202507231111})
\begin{multline*}
\|\mathcal{L}_{jk}\|_{L^1(0,T;\mathbb{R})} \\\leq \dfrac{\alpha}{\Gamma(1-\alpha)} \left\{ \int_0^T \int_0^t (t-s)^{-\alpha-1} \left\| u_{j,k}(s) - u_{j,k}(t) \right\|^2_{L^2(\Omega)}\, ds\, dt \right\}^{1/2} \\
\times \left\{ \int_0^T \int_0^t (t-s)^{-\alpha-1} \left\| v_{j,k}(s) - v_{j,k}(t) \right\|^2_{L^2(\Omega)}\, ds\, dt \right\}^{1/2}.
\end{multline*}
Combined with the definition of the norm in $W^{\frac{\alpha}{2},2}(0,T; L^2(\Omega))$ (see identities \eqref{202507231108} and \eqref{202504092058} for details), this yields
\begin{multline*}
\|\mathcal{L}_{jk}\|_{L^1(0,T;\mathbb{R})}\\ \leq \dfrac{\alpha}{\Gamma(1-\alpha)} \left\| J^\alpha_t \big[\xi'_j - \xi'_k\big] \right\|_{W^{\frac{\alpha}{2},2}(0,T;L^2(\Omega))} \left\| J^\alpha_t \big[\xi'_j + \xi'_k\big] \right\|_{W^{\frac{\alpha}{2},2}(0,T;L^2(\Omega))}.
\end{multline*}
By recalling the semigroup property of the Riemann-Liouville fractional integral and applying Corollary~\ref{202504092126} we obtain
\begin{multline*}
\|\mathcal{L}_{jk}\|_{L^1(0,T;\mathbb{R})} \\\leq \dfrac{\alpha}{m^2 \Gamma(1-\alpha)} \left\| J^{\frac{\alpha}{2}}_t \big[\xi'_j - \xi'_k\big] \right\|_{L^2(0,T;L^2(\Omega))} \left\| J^{\frac{\alpha}{2}}_t \big[\xi'_j + \xi'_k\big] \right\|_{L^2(0,T;L^2(\Omega))}.
\end{multline*}

Finally, Proposition \ref{202507221103} ensures that
$$
\|\mathcal{L}_{jk}\|_{L^1(0,T;\mathbb{R})} \leq N_\alpha \left\| \xi'_j - \xi'_k \right\|_{L^2(0,T;L^2(\Omega))} \left\| \xi'_j + \xi'_k \right\|_{L^2(0,T;L^2(\Omega))}.
$$
As before, we conclude that the sequence $\big(\|\mathcal{L}_{jk}\|_{L^1(0,T;\mathbb{R})}\big)_{j,k}$ tends to zero as $j$ and $k$ increase.

Taking into account the entire discussion above, we conclude that \eqref{202504081141} defines a Cauchy sequence in $L^1(0,T;\mathbb{R})$. Consequently, by Step 2, we obtain
\begin{equation*}
D^\alpha_t \left\| f \right\|_{L^2(\Omega)}^2 = g \in L^1(0,T;\mathbb{R}).\vspace*{0.2cm}
\end{equation*}

\noindent\textbf{Step 4: The proof of inequality \eqref{desa}.}

We begin this final step by recalling from \eqref{202504071759}, \eqref{202505092152}, and \eqref{202504081142} the existence of a sequence $(\psi_j)_{j\in \mathbb{N}} \subset C^\infty([0,T];L^2(\Omega))$ satisfying $\psi^{(k-1)}_j(0) = 0$ for every $j,k \in \mathbb{N}$, and such that
\begin{equation}\label{202505092250}
\psi_j \to f \quad \text{and} \quad D^\alpha_t \psi_j \to D^\alpha_t f, \quad \text{in } L^2(0,T;L^2(\Omega)),
\end{equation}
as well as
\begin{equation}\label{202505092255}
D^\alpha_t \|\psi_j\|^2_{L^2(\Omega)} \to D^\alpha_t \|f\|_{L^2(\Omega)}^2, \quad \text{in } L^1(0,T;\mathbb{R}).
\end{equation}

Now observe that
\begin{multline*}
\left| \big( D^\alpha_t \psi_j(t), \psi_j(t) \big)_{L^2(\Omega)} - \big( D^\alpha_t f(t), f(t) \big)_{L^2(\Omega)} \right| \\
= \left| \big( D^\alpha_t \psi_j(t) - D^\alpha_t f(t), \psi_j(t) \big)_{L^2(\Omega)} + \big( D^\alpha_t f(t), \psi_j(t) - f(t) \big)_{L^2(\Omega)} \right|,
\end{multline*}
for almost every $t \in (0,T)$. Applying the Cauchy--Schwarz inequality, we obtain
\begin{multline}\label{202505092251}
\left| \big( D^\alpha_t \psi_j(t), \psi_j(t) \big)_{L^2(\Omega)} - \big( D^\alpha_t f(t), f(t) \big)_{L^2(\Omega)} \right| \\
\leq \left\| D^\alpha_t \psi_j(t) - D^\alpha_t f(t) \right\|_{L^2(\Omega)} \left\| \psi_j(t) \right\|_{L^2(\Omega)} + \left\| D^\alpha_t f(t) \right\|_{L^2(\Omega)} \left\| \psi_j(t) - f(t) \right\|_{L^2(\Omega)},
\end{multline}
for almost every $t \in (0,T)$. Consequently, by combining \eqref{202505092250} and \eqref{202505092251}, it follows that
\begin{equation}\label{202505092256}
\big( D^\alpha_t \psi_j , \psi_j \big)_{L^2(\Omega)} \to \big( D^\alpha_t f , f \big)_{L^2(\Omega)} \quad \text{in } L^1(0,T;\mathbb{R}).
\end{equation}

As a consequence of \eqref{202505092255} and \eqref{202505092256}, we may extract a subsequence $(\psi_{j_k})_{k \in \mathbb{N}}$ from $(\psi_{j})_{j \in \mathbb{N}}$, such that
\begin{equation}\label{202505092258}
D^\alpha_t \|\psi_{j_k}\|^2_{L^2(\Omega)} \to D^\alpha_t \|f\|_{L^2(\Omega)}^2 \quad \text{and} \quad \big( D^\alpha_t \psi_{j_k} , \psi_{j_k} \big)_{L^2(\Omega)} \to \big( D^\alpha_t f , f \big)_{L^2(\Omega)},
\end{equation}
for almost every $t \in (0,T)$. Finally, recall that Proposition \ref{202504031004} guarantees that
\begin{equation*}
D^\alpha_t \left\| \psi_{j_k}(t) \right\|_{L^2(\Omega)}^2 \leq 2 \big( D^\alpha_t \psi_{j_k}(t), \psi_{j_k}(t) \big)_{L^2(\Omega)},
\end{equation*}
for all $k\in\mathbb{N}$ and almost every $t \in (0,T)$. Therefore, combining the pointwise limits in \eqref{202505092258} with the inequality above, we conclude the desired result.
\end{proof}

Before stating our final result, we find it useful to include a brief remark, as noted by Carvalho-Neto and Fehlberg Júnior in \cite[Remark 4.17]{CarFe0}, concerning the well-posedness of the Caputo fractional derivative $cD_t^\alpha f$ for functions $f \in L^2(0,T; \mathbb{R})$.

In general, the value $f(0)$ may not be well defined for functions in $L^2(0,T; X)$, which poses a technical issue in defining the Caputo fractional derivative at the initial time. To overcome this difficulty, some authors assume $f \in C([0,T]; X)$, but this requirement is stronger than necessary. In our setting, it suffices to assume that $f$ admits a continuous representative on an interval $[0,\delta]$, for some $0<\delta\leq T$.

As a consequence of Theorem \ref{teodesdrl}, we obtain the following result.

\begin{corollary} \label{teodesdcap}
Let $0 < \alpha < 1$ and suppose $f \in L^\infty(0,T; L^2(\Omega)) \cap C([0,\delta]; L^2(\Omega))$ for some $0 < \delta \leq T$, with $cD_t^\alpha f \in L^2(0,T; L^2(\Omega))$. Then $cD_t^\alpha \| f \|_{L^2(\Omega)}^2 \in L^1(0,T; \mathbb{R})$, and
\begin{equation*}
cD_t^\alpha \left\| f(t) \right\|_{L^2(\Omega)}^2 \leq 2 \big( cD_t^\alpha f(t), f(t) \big)_{L^2(\Omega)},
\end{equation*}
for almost every $t \in [0,T]$.
\end{corollary}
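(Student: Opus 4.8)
The plan is to reduce the statement to the already-established Riemann--Liouville case, Theorem \ref{teodesdrl}, by means of the standard translation $g := f - f(0)$, exactly as in the reduction from the Caputo to the Riemann--Liouville setting performed after Proposition \ref{202504031146}.

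First I would invoke the hypothesis $f \in C([0,\delta];L^2(\Omega))$ to give meaning to $f(0) \in L^2(\Omega)$. Setting $g(t) := f(t) - f(0)$, I note that $g \in L^\infty(0,T;L^2(\Omega))$ (since $f(0)$ is a fixed element of $L^2(\Omega)$), that $g$ inherits a continuous representative near $0$ with $g(0) = 0$, and that for $0<\alpha<1$ the definition of the Caputo derivative gives $D^\alpha_t g = D^\alpha_t[f - f(0)] = cD^\alpha_t f \in L^2(0,T;L^2(\Omega))$. Thus $g$ satisfies the hypotheses of Theorem \ref{teodesdrl}, which yields $D^\alpha_t \|g\|_{L^2(\Omega)}^2 \in L^1(0,T)$ together with
\[
D^\alpha_t \|g(t)\|_{L^2(\Omega)}^2 \leq 2\big(D^\alpha_t g(t), g(t)\big)_{L^2(\Omega)}, \qquad \text{for a.e. } t \in (0,T).
\]

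The core of the argument is to transfer this inequality back to $f$. Expanding $\|f\|^2_{L^2(\Omega)} = \|g\|^2_{L^2(\Omega)} + 2(g,f(0))_{L^2(\Omega)} + \|f(0)\|^2_{L^2(\Omega)}$ and applying the Caputo derivative termwise, I would use three elementary facts: the Caputo derivative annihilates the constant $\|f(0)\|^2_{L^2(\Omega)}$; it commutes with the fixed inner product against $f(0)$, so that $cD^\alpha_t(g(t),f(0))_{L^2(\Omega)} = (cD^\alpha_t f(t),f(0))_{L^2(\Omega)}$ (via commutation of the Bochner integral and the weak derivative with the continuous functional $(\cdot,f(0))_{L^2(\Omega)}$); and, because $\|g(0)\|^2_{L^2(\Omega)} = 0$, the Caputo and Riemann--Liouville derivatives of $\|g\|^2_{L^2(\Omega)}$ coincide, i.e.\ $cD^\alpha_t\|g\|^2_{L^2(\Omega)} = D^\alpha_t\|g\|^2_{L^2(\Omega)}$. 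Combining these and using linearity gives
\[
cD^\alpha_t \|f(t)\|_{L^2(\Omega)}^2 = D^\alpha_t \|g(t)\|_{L^2(\Omega)}^2 + 2\big(cD^\alpha_t f(t), f(0)\big)_{L^2(\Omega)}.
\]
On the other hand, writing $f = g + f(0)$ and $cD^\alpha_t f = D^\alpha_t g$, I obtain
\[
2\big(cD^\alpha_t f(t), f(t)\big)_{L^2(\Omega)} = 2\big(D^\alpha_t g(t), g(t)\big)_{L^2(\Omega)} + 2\big(cD^\alpha_t f(t), f(0)\big)_{L^2(\Omega)}.
\]
Since the correction term $2(cD^\alpha_t f, f(0))_{L^2(\Omega)}$ is identical on both lines, adding it to the Riemann--Liouville inequality for $g$ preserves the inequality and delivers $cD^\alpha_t\|f\|^2_{L^2(\Omega)} \leq 2(cD^\alpha_t f, f)_{L^2(\Omega)}$ for a.e.\ $t$. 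The membership $cD^\alpha_t\|f\|^2_{L^2(\Omega)} \in L^1(0,T)$ then follows from the displayed identity: $D^\alpha_t\|g\|^2_{L^2(\Omega)} \in L^1(0,T)$ by Theorem \ref{teodesdrl}, while $(cD^\alpha_t f(t),f(0))_{L^2(\Omega)} \in L^2(0,T) \subset L^1(0,T)$ by Cauchy--Schwarz, using $cD^\alpha_t f \in L^2(0,T;L^2(\Omega))$ and $f(0)$ fixed.

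The main obstacle I anticipate is the bookkeeping in the termwise computation: one must not confuse $\|f-f(0)\|^2_{L^2(\Omega)}$ with $\|f\|^2_{L^2(\Omega)} - \|f(0)\|^2_{L^2(\Omega)}$, and must carefully track the correction terms arising from the cross term $2(g,f(0))_{L^2(\Omega)}$ and from the constant $\|f(0)\|^2_{L^2(\Omega)}$. The whole reduction hinges on two observations, namely that $g(0)=0$ forces agreement of the Caputo and Riemann--Liouville derivatives of $\|g\|^2_{L^2(\Omega)}$, and that the surviving correction term appears \emph{identically} on both sides, so that it cancels in the inequality. Justifying the commutation of $cD^\alpha_t$ with the fixed functional $(\cdot,f(0))_{L^2(\Omega)}$ is the only remaining point requiring care, and it is routine.
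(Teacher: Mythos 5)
Your proposal is correct and takes essentially the same route as the paper's proof: the translation $g := f - f(0)$, an application of Theorem \ref{teodesdrl} to $g$, and the transfer back to $f$ through algebraic identities in which the correction term $2\big(cD_t^\alpha f(t), f(0)\big)_{L^2(\Omega)}$ appears identically on both sides and cancels --- your two displayed identities are precisely the paper's \eqref{teodesdcap1} and \eqref{teodesdcap2}, merely organized via the expansion of $\|f\|^2_{L^2(\Omega)}$ rather than of $\|g\|^2_{L^2(\Omega)}$. Your explicit Cauchy--Schwarz argument for the $L^1(0,T;\mathbb{R})$ membership of $cD_t^\alpha\|f\|^2_{L^2(\Omega)}$ is a welcome point of extra care but not a different method.
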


\begin{proof}
For this proof, let us define the auxiliary function $g(t) := f(t) - f(0)$. Observe that $g \in L^\infty(0,T;L^2(\Omega))$ and $D^\alpha_t g = cD_t^\alpha f \in L^2(0,T;L^2(\Omega))$. By Theorem~\ref{teodesdrl}, it follows that $D^\alpha_t \|g\|_{L^2(\Omega)}^2 \in L^1(0,T;\mathbb{R})$ and
\begin{equation}\label{teodesdcap0}
D^\alpha_t \|g(t)\|_{L^2(\Omega)}^2 \leq 2 \big( D^\alpha_t g(t), g(t) \big)_{L^2(\Omega)},
\end{equation}
for almost every $t \in [0,T]$.

Note that
\begin{equation} \label{teodesdcap1}
\big( D^\alpha_t g(t), g(t) \big)_{L^2(\Omega)} = \big( cD_t^\alpha f(t), f(t) \big)_{L^2(\Omega)} - \big( D^\alpha_t [f(t) - f(0)], f(0) \big)_{L^2(\Omega)},
\end{equation}
for almost every $t \in [0,T]$. On the other hand,
\begin{multline*}
D^\alpha_t \|g(t)\|_{L^2(\Omega)}^2 = D^\alpha_t \int_\Omega |f(x,t) - f(x,0)|^2 \, dx \\
= \int_\Omega D^\alpha_t [f(x,t)]^2 - 2 f(x,0) D^\alpha_t f(x,t) + D^\alpha_t [f(x,0)]^2 \, dx \\
= \int_\Omega D^\alpha_t \left( [f(x,t)]^2 - [f(x,0)]^2 \right) - 2 f(x,0) D^\alpha_t f(x,t) + 2 D^\alpha_t [f(x,0)]^2 \, dx,
\end{multline*}
for almost every $t \in [0,T]$, which leads to
\begin{equation}\label{teodesdcap2}
D^\alpha_t \|g(t)\|_{L^2(\Omega)}^2 = cD_t^\alpha \|f(t)\|_{L^2(\Omega)}^2 - 2 \big( D^\alpha_t [f(t) - f(0)], f(0) \big)_{L^2(\Omega)},
\end{equation}
for almost every $t \in [0,T]$.

Comparing \eqref{teodesdcap0}, \eqref{teodesdcap1} and \eqref{teodesdcap2}, we conclude the proof.
\end{proof}

\section{The Fractional Heat Equation}\label{fracheat}

In this final section, we apply the Faedo–Galerkin method together with Corollary \ref{teodesdcap} to the fractional heat equation in order to prove the existence of a weak solution. 

We emphasize that standard notations for Sobolev spaces, Lebesgue spaces, and test functions are adopted throughout this last section. As these are classical, we refer the reader to \cite[Chapter~5]{Ev01} for precise definitions and notations. Furthermore, routine computations are omitted to keep the exposition concise and focused on the essential new arguments required by the fractional setting.

Consider $1/2<\alpha<1$, $\Omega$ an open and bounded subset of $\mathbb{R}^n$, $T>0$ fixed, $f:[0,T]\rightarrow L^2(\Omega)$ such that
\begin{equation}\label{202507041505}\displaystyle\mathop{\operatorname{ess\,sup}}_{t \in [0,T]}\int_0^t(t-s)^{\alpha-1}\|f(s)\|^2_{L^2(\Omega)}\,ds<\infty,\end{equation}
and $u_0\in L^2(\Omega)$. Let us build a weak solution to the fractional parabolic problem
\begin{equation}\label{202507231532}
\begin{cases}
cD_t^\alpha u(x,t) - \Delta u(x,t) = f(x,t), & (x,t)\in \Omega\times(0,T], \\
u(x,t) = 0, & (x,t)\in \partial \Omega \times [0,T], \\
u(x,0) = u_0(x), & x\in\Omega.
\end{cases}
\end{equation}

Let us begin with the definition of a weak solution to problem \eqref{202507231532}. In this work, we follow exactly the same classical definition of a weak solution for the heat equation, in order to keep our focus on the main subject of this study, namely the fractional energy equations. However, we shall broaden this discussion in future works.

\begin{definition}\label{202507241517}
We say that a function $u \in L^2(0,T; H_0^1(\Omega))$ with $D_t^\alpha \big(u(\cdot)-u_0\big) \in L^2(0,T; H^{-1}(\Omega))$ is a weak solution of \eqref{202507231532} if:
\begin{itemize}
  \item[(i)] for each $v \in H_0^1(\Omega)$ and for almost every $t\in[0,T]$, we have
  $$\big\langle D_t^\alpha(u(t)-u_0), v\big\rangle_{H^{-1}(\Omega),H^1_0(\Omega)} + \big(\nabla u(t), \nabla v\big)_{L^2(\Omega)} = \big(f(t), v\big)_{L^2(\Omega)};$$

  \item[(ii)] the initial condition is satisfied, i.e., $u(0) = u_0$.
\end{itemize}
\end{definition}

\begin{remark}\label{202507271047} We now turn to three topics previously introduced in this section that may not have been entirely clear. Although they are related, each requires a separate discussion.
\begin{itemize}
\item[(i)] First, observe that the regularity condition in \eqref{202507041505}, imposed on function $f$, is more restrictive than the classical assumption $f \in L^2(0, T; L^2(\Omega))$. Indeed, Hölder's inequality yields
\begin{multline*}\qquad\int_0^t\|f(s)\|^2_{L^2(\Omega)}\,ds=\int_0^t(t-s)^{1-\alpha}\Big[(t-s)^{\alpha-1}\|f(s)\|^2_{L^2(\Omega)}\Big]\,ds\\\leq \big[T^{2-\alpha}/(2-\alpha)\big]\int_0^t(t-s)^{\alpha-1}\|f(s)\|^2_{L^2(\Omega)}\,ds.\end{multline*}
This additional regularity is essential in the present proof to obtain that $u \in L^\infty(0, T; L^2(\Omega))$, a key property to use Theorem \ref{teodesdrl}.
\item[(ii)] We also require that $1/2 < \alpha < 1$ to ensure the continuity of the solution from $[0,T]$ into $L^2(\Omega)$. This restriction arises from the limited regularity properties of the Riemann--Liouville fractional integral; see Theorems~\ref{202507221156} and~\ref{202507231029} for details.
\item[(iii)] Finally, we need to discuss condition~$(ii)$ in Definition~\ref{202507241517}, since it only satisfies $u \in L^2(0,T; H_0^1(\Omega))$ and $D_t^\alpha (u(\cdot)-u_0) \in L^2(0,T; H^{-1}(\Omega))$, which is not sufficient to rigorously assign meaning to the initial condition.

To ensure the well-posedness of this requirement, we also need the assumption $1/2 < \alpha < 1$. In fact, as will be shown below, we prove that $D_t^\alpha \big(u(\cdot)-u_0\big) \in L^2(0,T; L^2(\Omega))$. Then, combining this with the regularity assumed in item~$(i)$ above, i.e., $u \in L^\infty(0, T; L^2(\Omega))$, we may apply items~$(i)$ and~$(ii)$ of Proposition~\ref{202507221111} to deduce that
$$
u(t) = u_0 + J_t^\alpha\Big[D_t^\alpha\big(u(t)-u_0\big)\Big],
$$
for almost every $t \in [0,T]$. Moreover, since Proposition~\ref{202507221156} ensures $J_t^\alpha\big[D_t^\alpha(u(t)-u_0)\big]$ belongs to $C([0,T]; L^2(\Omega))$, it follows that $u$ also belongs to $C([0,T]; L^2(\Omega))$. Hence, we may conclude that the initial condition is well-posed.
\end{itemize}
\end{remark}

Classically, we begin by considering $(w_j)_{j \in \mathbb{N}}$ as an orthogonal basis of $H_0^1(\Omega)$ that is also an orthonormal basis of $L^2(\Omega)$ (the complete set of normalized eigenfunctions of the operator $-\Delta$ in $H_0^1(\Omega)$; see \cite[ Section 6.5.1]{Ev01}).

Fix now $m \in \mathbb{N}$. Let us prove the existence of $\{g_{im}\}_{i=1}^{m}$ a sequence of functions, where $g_{im} : [0,T] \to \mathbb{R}$, such that, by defining
\begin{equation}\label{202507231609}
	u_{m}(t):=\sum\limits_{i=1}^{m}g_{im}(t)w_{i},\quad \forall i\in\{1,\ldots,m\},
\end{equation}
we have $u_m(t)$ satisfies
\begin{equation}\label{202507231610}
	\left\{\begin{aligned}
		cD_{t}^{\alpha}(u_{m}(t),w_{j})_{L^2(\Omega)}+(\nabla u_{m}(t),\nabla w_{j})_{L^2(\Omega)} &=(f(t),w_{j})_{L^2(\Omega)},\\ 
		u_{m}(0) &= u_{0m},
	\end{aligned}\right.
\end{equation}
for every $j\in\{1,\ldots,m\}$ and $t\in(0,T]$, where $u_{0m}$ is the orthogonal projection of $u_{0}$ onto $\text{span}\{w_1,\ldots,w_m\}$, given by
\begin{equation}\label{202507251709}
	u_{0m}=\sum\limits_{j=1}^{m}(u_{0},w_{j})_{L^2(\Omega)}w_{j} \rightarrow u_{0}, \quad \text{in } L^2(\Omega), \quad \text{as } m\to\infty.
\end{equation}

From \eqref{202507231609} and \eqref{202507231610}, we conclude that the $(g_{1m}, \ldots, g_{mm})$ must satisfy the following ODE:
\begin{equation}\label{202507241008}\left\{\begin{aligned}
	cD_{t}^{\alpha}U(t)+ B_{m}U(t)&=F_{m}(t),\\
	U(0)&=U_{0m},
\end{aligned}\right.\end{equation}
with
\begin{align*}
	B_{m}=\left[\begin{array}{ccc}
		(\nabla w_{1},\nabla w_{1})_{L^2(\Omega)}&\ldots  & 0 \\
		\vdots       &\ddots  & \vdots\\
		0 &\ldots  & (\nabla w_{m},\nabla w_{m})_{L^2(\Omega)}
	\end{array}\right],
\end{align*}

\begin{align*}
F_{m}(t)=\left[\begin{array}{c}
		( f(t),w_{1})_{L^2(\Omega)}\\
		\vdots  \\
		( f(t),w_{m} )_{L^2(\Omega)}
	\end{array}\right]\hspace*{1cm}\textrm{ and}\hspace*{1cm}
	U_{0m}=\left[\begin{array}{c}
		(u_{0},w_{1})_{L^2(\Omega)}\\
		\vdots  \\
		(u_{0},w_{m})_{L^2(\Omega)}
	\end{array}\right].
\end{align*}

Since $F_m \in L^2(0,T;\mathbb{R}^m)$ (recall item $(i)$ of Remark \ref{202507271047}), \eqref{202507241008} is a system of fractional ordinary differential equations with a forcing term given by a Carathéodory function, which has already been studied in \cite{CarFrTo1,LaWe1}. Therefore, the existence of $g_{jm}\in C([0,T];\mathbb{R})$ with $cD_t^\alpha g_{jm}\in L^2(0,T;\mathbb{R})$, for each $j\in\{1,\ldots,m\}$, is guaranteed.

\subsection{Energy Estimates}

Here, we need to carry out two distinct constructions in order to derive the energy estimates required to prove the existence and uniqueness of solutions to problem \eqref{202507231610}.
\begin{itemize}
\item[(i)] If we multiply the first equation in \eqref{202507231610} by $g_{jm}$ for each $1 \leq j \leq m$ and sum the resulting equations, we obtain
\begin{equation*}
	(cD_{t}^{\alpha}u_{m}(t),u_{m}(t))_{L^2(\Omega)} + (\nabla u_{m}(t),\nabla u_{m}(t))_{L^2(\Omega)} = (f(t),u_{m}(t))_{L^2(\Omega)},
\end{equation*}
for almost every $t \in [0,T]$. Since $u_m\in C([0,T]; L^2(\Omega))$ and we also have $cD_t^\alpha u_m\in L^2(0,T; L^2(\Omega))$,  Corollary \ref{teodesdcap} ensures $cD_{t}^{\alpha}\|u_{m}\|^2_{L^2(\Omega)}\in L^1(0,T;\mathbb{R})$ and
\begin{equation*}
	(1/2)cD_{t}^{\alpha}\|u_{m}(t)\|_{L^2(\Omega)}^{2} + \|\nabla u_{m}(t)\|_{L^2(\Omega)}^{2} \leq (f(t),u_{m}(t))_{L^2(\Omega)},
\end{equation*}
for almost every $t \in [0,T]$. This, together with Young's inequality, implies that there exist constants $c_1, c_2 > 0$ such that
\begin{equation}\label{202507241107}
	D_{t}^{\alpha}\Big(\|u_{m}(t)\|_{L^2(\Omega)}^{2}-\|u_{0m}\|_{L^2(\Omega)}^{2}\Big) + c_1\|u_{m}(t)\|_{H^1_0(\Omega)}^{2} \leq c_2\|f(t)\|^2_{L^2(\Omega)},
\end{equation}
for almost every $t \in [0,T]$.

\begin{itemize}
\item[(a)] By integrating both sides of \eqref{202507241107} from $0$ to $t\in(0,T]$, thanks to item $(i)$ of Proposition \ref{202507221111}, we obtain
\begin{multline*}
	\qquad\qquad\quad\dfrac{1}{\Gamma(1-\alpha)}\int_0^t(t-s)^{-\alpha}\Big(\|u_{m}(s)\|_{L^2(\Omega)}^{2}-\|u_{0m}\|_{L^2(\Omega)}^{2}\Big)\,ds\\+c_1\int_0^t\|u_{m}(s)\|_{H^1_0(\Omega)}^{2}\,ds\leq c_2\int_0^t\|f(s)\|^2_{L^2(\Omega)}\,ds,
\end{multline*}
what implies that there exists $C_1>0$ such that
\begin{equation}\label{202507241059}
\| u_m \|_{L^2(0,T; H_0^1(\Omega))}^2 \leq C_1 \left( \| u_0 \|_{L^2(\Omega)}^2 + \| f \|_{L^2(0,T; L^2(\Omega))}^2 \right).
\end{equation}
For the fact that $\| f \|_{L^2(0,T; L^2(\Omega))}$ is finite, see item $(i)$ of Remark \ref{202507271047}.

\item[(b)] We now proceed in a standard way. Fix $v \in H_0^1(\Omega)$ with $\|v\|_{H_0^1(\Omega)} \leq 1$, and decompose $v = v_1 + v_2$, where $v_1 \in \text{span}\{w_1, \ldots, w_m\}$ and $(v_2, w_k)_{L^2(\Omega)} = 0$ for every $k \in \{1, \ldots, m\}$. Since functions $(w_j)_{j \in \mathbb{N}}$ are orthogonal in $H_0^1(\Omega)$, it follows that
\begin{multline*}\qquad\qquad\quad(v_2,w_k)_{H^1_0(\Omega)}=\left(\sum_{j=1}^\infty(v_2,w_j)_{L^2(\Omega)}w_j,w_k\right)_{H^1_0(\Omega)}\\=(v_2,w_k)_{L^2(\Omega)}(w_k,w_k)_{H^1_0(\Omega)}=0,\end{multline*}
for each $k\in\{1,\ldots,m\}$, and therefore
$$\|v_1\|_{H_0^1(\Omega)} \leq \|v\|_{H_0^1(\Omega)} \leq 1.$$

Then, from the first equation in \eqref{202507231610}, since $(w_j)_{j \in \mathbb{N}}$ are orthonormal in $L^2(\Omega)$, we obtain
\begin{multline*}
\qquad\qquad\quad(cD_t^\alpha u_m(t), v)_{L^2(\Omega)} = (cD_t^\alpha u_m(t), v_1)_{L^2(\Omega)} \\
= (f(t), v_1)_{L^2(\Omega)} - (\nabla u_m(t), \nabla v_1)_{L^2(\Omega)},
\end{multline*}
for almost every $t \in [0,T]$. Combined with \eqref{202507241059}, this implies the existence of a constant $C_2 > 0$ such that
\begin{equation}\label{202507241100}
\|cD_t^\alpha u_m\|^2_{L^2(0,T;H^{-1}(\Omega))} \leq C_2 \left( \| u_0 \|_{L^2(\Omega)}^2 + \| f \|_{L^2(0,T; L^2(\Omega))}^2 \right).
\end{equation}

\item[(c)] Finally, by applying $J_t^\alpha$ to both sides of \eqref{202507241107} and using items $(i)$ and $(ii)$ of Proposition \ref{202507221111}, we obtain
\begin{multline*}
	\qquad\qquad\quad\|u_{m}(t)\|_{L^2(\Omega)}^{2} + c_1J_t^\alpha\Big(\|u_{m}(t)\|_{H^1_0(\Omega)}^{2}\Big) \leq \|u_{0m}\|_{L^2(\Omega)}^{2}+ c_2J_t^\alpha\Big(\|f(t)\|^2_{L^2(\Omega)}\Big),
\end{multline*}
for almost every $t \in [0,T]$, which leads to the existence of a constant $C_3 > 0$ such that
\begin{multline}\label{202507271006}
	\qquad\qquad\quad\|u_{m}(t)\|_{L^\infty(0,T;L^2(\Omega))}^{2}\leq C_3\left(\|u_{0}\|_{L^2(\Omega)}^{2}\right. \\+ \left.\displaystyle\mathop{\operatorname{ess\,sup}}_{t \in [0,T]}\int_0^t(t-s)^{\alpha-1}\|f(s)\|^2_{L^2(\Omega)}\,ds\right).
\end{multline}
\end{itemize}
\item[(ii)]We now need to perform an estimate that is not typically used in the classical analysis of the heat equation, but which is of fundamental importance in our setting. If we multiply the first equation in \eqref{202507231610} by $cD_t^\alpha g_{jm}$ for each $1 \leq j \leq m$ and sum the resulting equations, we obtain
\begin{multline*}
	\qquad\quad(cD_{t}^{\alpha}u_{m}(t),cD_{t}^{\alpha}u_{m}(t))_{L^2(\Omega)} + \big(\nabla u_{m}(t),\nabla \big[cD_{t}^{\alpha}u_{m}(t)\big]\big)_{L^2(\Omega)} \\= (f(t),cD_{t}^{\alpha}u_{m}(t))_{L^2(\Omega)},
\end{multline*}
for almost every $t \in [0,T]$. Since $\nabla u_m \in C([0,T]; L^2(\Omega))$ and $cD_t^\alpha \nabla u_m \in L^2(0,T; L^2(\Omega))$, Corollary \ref{teodesdcap}, together with Young's inequality, implies that $cD_{t}^{\alpha}\|\nabla u_{m}\|^2_{L^2(\Omega)} \in L^1(0,T;\mathbb{R})$ and 
\begin{equation}\label{202507271058}
	\|D_{t}^{\alpha}u_{m}(t)\|_{L^2(\Omega)}^{2} + cD_t^\alpha\|\nabla u_{m}(t)\|_{L^2(\Omega)}^{2} \leq \|f(t)\|^2_{L^2(\Omega)},
\end{equation}
for almost every $t \in [0,T]$. By integrating both sides of \eqref{202507271058} from $0$ to $t\in(0,T]$, we obtain
\begin{multline*}
	\qquad\quad\int_0^t\|cD_t^\alpha u_{m}(s)\|_{L^2(\Omega)}^{2}\,ds\\+\dfrac{1}{\Gamma(1-\alpha)}\int_0^t(t-s)^{-\alpha}\|u_{m}(s)\|_{H^1_0(\Omega)}^{2}\,ds\leq \int_0^t\|f(s)\|^2_{L^2(\Omega)}\,ds,
\end{multline*}
what implies that 
\begin{equation}\label{202507271027}
\| cD_t^\alpha u_m \|_{L^2(0,T; L^2(\Omega))}^2 \leq \| f \|_{L^2(0,T; L^2(\Omega))}^2.
\end{equation}
\end{itemize}

\subsection{Existence, Uniqueness, and Some Regularity Properties of the Solution}

We now aim to prove that the sequence $(u_m)_{m \in \mathbb{N}}$ admits a subsequence that converges weakly to a function, which we shall show to be the weak solution of \eqref{202507231532}. For clarity and organization, we state this result explicitly below.

\begin{theorem} There exists a unique weak solution $u$ to problem \eqref{202507231532}. Moreover, $u \in C([0,T]; L^2(\Omega))$, with $cD_t^\alpha u \in L^2(0,T; L^2(\Omega))$.
\end{theorem}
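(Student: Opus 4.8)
The plan is to run the Faedo–Galerkin limit passage, taking full advantage of the fact that problem \eqref{202507231532} is \emph{linear}, so that weak convergence alone will close the argument and no compactness device (Aubin–Lions or otherwise) is needed. First I would collect the four a priori bounds already derived: by \eqref{202507241059} and \eqref{202507271006} the sequence $(u_m)$ is bounded in $L^2(0,T;H_0^1(\Omega))$ and in $L^\infty(0,T;L^2(\Omega))$, while by \eqref{202507271027} the sequence $(cD_t^\alpha u_m)$ is bounded in $L^2(0,T;L^2(\Omega))$. Passing to a subsequence (not relabeled), I obtain $u_m \rightharpoonup u$ weakly in $L^2(0,T;H_0^1(\Omega))$ and weakly-$*$ in $L^\infty(0,T;L^2(\Omega))$, together with $cD_t^\alpha u_m \rightharpoonup \chi$ weakly in $L^2(0,T;L^2(\Omega))$ for some limit $\chi$.

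The crucial step is to identify $\chi$ with the fractional derivative of the limit. Since $u_m \in C([0,T];L^2(\Omega))$, $cD_t^\alpha u_m \in L^2(0,T;L^2(\Omega))$, and $u_m(0)=u_{0m}$, items $(i)$ and $(ii)$ of Proposition~\ref{202507221111} yield the representation $u_m = u_{0m} + J_t^\alpha\big[cD_t^\alpha u_m\big]$. As $J_t^\alpha$ is a bounded linear operator on $L^2(0,T;L^2(\Omega))$ (Proposition~\ref{202507221103}), it is weakly continuous, so $J_t^\alpha[cD_t^\alpha u_m] \rightharpoonup J_t^\alpha \chi$; combined with $u_{0m}\to u_0$ in $L^2(\Omega)$ and $u_m \rightharpoonup u$, this forces $u = u_0 + J_t^\alpha\chi$. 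Applying $D_t^\alpha$ and invoking $D_t^\alpha J_t^\alpha = \mathrm{Id}$ (item $(ii)$ of Proposition~\ref{202507221111}) gives $D_t^\alpha(u-u_0) = \chi \in L^2(0,T;L^2(\Omega))$. Moreover, because $\alpha>1/2$, Proposition~\ref{202507221156} ensures $J_t^\alpha\chi \in C([0,T];L^2(\Omega))$, whence $u \in C([0,T];L^2(\Omega))$, exactly as anticipated in item $(iii)$ of Remark~\ref{202507271047}; in particular $cD_t^\alpha u = D_t^\alpha(u-u_0) = \chi$ once the initial condition is verified.

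With the limit identified I would pass to the limit in the Galerkin system \eqref{202507231610}. Fixing $j$ and a scalar test function $\theta\in C_c^\infty((0,T))$, multiplying the first equation by $\theta$, and integrating over $[0,T]$, the linearity of every term together with the weak convergences $cD_t^\alpha u_m \rightharpoonup \chi$ and $\nabla u_m \rightharpoonup \nabla u$ in $L^2(0,T;L^2(\Omega))$ lets me send $m\to\infty$ (for $m\ge j$); using the density of $\{w_j\}$ in $H_0^1(\Omega)$ and the embedding $L^2(\Omega)\subset H^{-1}(\Omega)$, I recover the identity in item $(i)$ of Definition~\ref{202507241517}. The initial condition $u(0)=u_0$ then follows from the representation $u=u_0+J_t^\alpha\chi$ together with $(J_t^\alpha\chi)(0)=0$ (valid since $\chi\in L^2$ and $\alpha>1/2$) and the continuous $L^2(\Omega)$-representative just obtained, giving item $(ii)$.

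The remaining task is uniqueness, and here Corollary~\ref{teodesdcap} does the essential work. Given two solutions $u_1,u_2$, I set $w=u_1-u_2$, so that $w\in L^\infty(0,T;L^2(\Omega))\cap C([0,T];L^2(\Omega))$, $cD_t^\alpha w \in L^2(0,T;L^2(\Omega))$, and $w(0)=0$; subtracting the two weak formulations and testing with $v=w(t)\in H_0^1(\Omega)$ gives $\big(cD_t^\alpha w(t),w(t)\big)_{L^2(\Omega)} = -\|\nabla w(t)\|_{L^2(\Omega)}^2 \le 0$. Corollary~\ref{teodesdcap} then yields $cD_t^\alpha\|w(t)\|_{L^2(\Omega)}^2 \le 0$ a.e., and since $w(0)=0$ this reads $\tfrac{d}{dt}J_t^{1-\alpha}\|w\|_{L^2(\Omega)}^2\le 0$; because $J_t^{1-\alpha}\|w\|_{L^2(\Omega)}^2\big|_{t=0}=0$ by item $(i)$ of Proposition~\ref{202507221111}, the map $t\mapsto J_t^{1-\alpha}\|w(t)\|_{L^2(\Omega)}^2$ is nonpositive, while it is manifestly nonnegative as the fractional integral of a nonnegative function, so it vanishes identically and $w\equiv 0$. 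I expect the limit identification of the second paragraph — reconciling the weak limit $\chi$ with the nonlocal operator $cD_t^\alpha u$ — to be the main obstacle, since it is precisely the nonlocality of the fractional derivative that blocks a naive term-by-term passage to the limit; the representation $u=u_0+J_t^\alpha\chi$ and the weak continuity of $J_t^\alpha$ are exactly what circumvent it.
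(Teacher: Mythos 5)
Your proposal is correct and follows essentially the same route as the paper's proof: extraction of the same weak and weak-$*$ limits from the bounds \eqref{202507241059}, \eqref{202507271006}, \eqref{202507271027}, limit passage in the Galerkin system by linearity and density of the basis combinations, the representation $u = u_0 + J_t^\alpha\big[D_t^\alpha(u-u_0)\big]$ together with Propositions \ref{202507221111} and \ref{202507221156} (where $\alpha>1/2$ is used) to obtain $u \in C([0,T];L^2(\Omega))$ and $u(0)=u_0$, and the fractional energy inequality for uniqueness. The only deviations are minor and harmless: you explicitly justify the identification $D_t^\alpha(u_m - u_{0m}) \rightharpoonup D_t^\alpha(u - u_0)$ via the weak continuity of the bounded operator $J_t^\alpha$, a point the paper asserts in \eqref{202507251621} without detailed argument, and in the uniqueness step you conclude from the monotonicity and nonnegativity of $t \mapsto J_t^{1-\alpha}\|w(t)\|_{L^2(\Omega)}^2$, whereas the paper applies $J_t^\alpha$ to $D_t^\alpha\|\sigma(t)\|_{L^2(\Omega)}^2 \le 0$ and invokes item $(ii)$ of Proposition \ref{202507221111} --- equivalent arguments since $w(0)=0$ makes the Caputo and Riemann--Liouville derivatives coincide.
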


\begin{proof} We shall split this proof in 4 steps.\vspace*{0.2cm}

\noindent\textbf{Step 1: The Weak Convergences.}

From the energy estimates \eqref{202507241059}, \eqref{202507241100}, \eqref{202507271006} and \eqref{202507271027}, by the Banach--Alaoglu Theorem, there exists a subsequence $(u_{m_l})_{l \in \mathbb{N}}\subset (u_m)_{m \in \mathbb{N}}$ and a function $u \in L^2(0,T; H_0^1(\Omega)) \cap L^\infty(0,T; L^2(\Omega))$ such that $D_t^\alpha(u(\cdot)-u_0) \in L^2(0,T; L^2(\Omega))$ and
\begin{equation} \label{202507251621}
\left\{
\begin{array}{rll}
u_{m_l} &\rightharpoonup u,& \text{in } L^2(0,T; H_0^1(\Omega)), \\
u_{m_l} &\overset{*}{\rightharpoonup} u,& \text{in } L^\infty(0,T; L^2(\Omega)),\\
D_t^\alpha (u_{m_l}-u_{0{m_l}}) &\rightharpoonup D_t^\alpha (u-u_{0}),& \text{in } L^2(0,T; L^2(\Omega)).\vspace*{0.2cm}
\end{array}
\right.
\end{equation}

\noindent\textbf{Step 2: Proving that $u$ is the Weak Solution.} 

Fix an integer $N$ and consider a test function $v \in C^\infty_c((0,T); H_0^1(\Omega))$ of the form
\begin{equation} \label{testv}
\nu(t) = \sum_{k=1}^N d^k(t) w_k,
\end{equation}
where each $d^k(t)$ belongs to $C^\infty_c((0,T);\mathbb{R})$. Choosing $m \geq N$, we multiply the first equation in \eqref{202507231610} by $d^k(t)$ for $k = 1,\dots, N$, and integrate over $[0,T]$ to obtain
\begin{multline*}
\int_0^T ( D_t^\alpha(u_m(t)-u_{0m}), \nu(t) )_{L^2(\Omega)}\, dt +\int_0^T(\nabla u_m(t), \nabla \nu(t))_{H^1_0(\Omega)} \, dt \\= \int_0^T (f(t), \nu(t))_{L^2(\Omega)} \, dt.
\end{multline*}

Now, by taking $m = m_l$ and using the convergences \eqref{202507251709} and \eqref{202507251621}, we can pass to the limit and derive
\begin{multline} \label{intformu}
\int_0^T ( D_t^\alpha(u(t)-u_0), \nu(t) )_{L^2(\Omega)}\, dt +\int_0^T(\nabla u(t), \nabla \nu(t))_{H^1_0(\Omega)} \, dt \\= \int_0^T (f(t), \nu(t))_{L^2(\Omega)} \, dt.
\end{multline}

Since functions of the form \eqref{testv} are dense in $L^2(0,T; H_0^1(\Omega))$, we conclude that \eqref{intformu} holds for every $\nu$ in this space. In particular, for almost every $t \in [0,T]$ and every $v \in H_0^1(\Omega)$, it holds that
\begin{equation} \label{202507271140}\big\langle D_t^\alpha(u(t)-u_0), v\big\rangle_{H^{-1}(\Omega),H^1_0(\Omega)} + \big(\nabla u(t), \nabla v\big)_{L^2(\Omega)} = \big(f(t), v\big)_{L^2(\Omega)}.\vspace*{0.2cm}\end{equation}
 
\noindent\textbf{Step 3: Regularity of $u$ and the Initial Condition.} 

Here we simply observe that, by using items $(i)$ and $(ii)$ of Proposition \ref{202507221111} and the regularities of $u$ given in \eqref{202507251621}, we obtain
$$J^\alpha_t \big[D^\alpha_t \big(u(t)-u_0)\big] = u(t)-u_0,$$
for almost every $t \in [0,T]$. Then, by applying Proposition \ref{202507221156} and noting that $1/2 < \alpha < 1$, we conclude that $u \in C([0,T];L^2(\Omega))$, and finally, thanks again to item $(i)$ of Proposition \ref{202507221111}, that $u(0) = u_0$.
\vspace*{0.3cm}

\noindent\textbf{Step 4: Uniqueness of Solution.} 

Consider a pair of weak solutions $u_1$ and $u_2$ satisfying the properties established so far. Define $\sigma = u_1 - u_2$, and observe that, thanks to \eqref{202507271140}, we have:
\begin{equation*}
( D_t^\alpha\sigma(t), \nu(t) )_{L^2(\Omega)} +(\nabla \sigma(t), \nabla \nu(t))_{H^1_0(\Omega)} = 0
\end{equation*}
for almost every $t\in[0,T]$. 

Now, given that $\sigma \in L^2(0,T; H_0^1(\Omega))$, we set $\nu(t) = \sigma(t)$. Since we already know that $\sigma \in C([0,T]; L^2(\Omega))$ and $D_t^\alpha \sigma \in L^2(0,T; L^2(\Omega))$, we obtain from Theorem \ref{teodesdrl} that $D_t^\alpha \|\sigma\|^2$ belongs to $L^1(0,T;\mathbb{R})$ and
\begin{equation}\label{202507271121}
D_t^\alpha\left\|\sigma(t)\right\|^2_{L^2(\Omega)}\leq0,
\end{equation}
for almost every $t\in[0,T]$. Finally, item $(ii)$ of Proposition \ref{202507221111} ensures that, by applying $J_t^\alpha$ to both sides of \eqref{202507271121}, we obtain
\begin{equation*}
\left\|\sigma(t)\right\|^2_{L^2(\Omega)} \leq 0,
\end{equation*}
for almost every $t \in [0,T]$, which leads us to conclude that $\sigma(t) \equiv 0$ in $L^2(\Omega)$ for almost every $t \in [0,T]$, thus proving uniqueness.
\end{proof}

\section*{Acknowledgements}
The authors Juan C. O. Ballesteros and Pedro G. P. Torelli gratefully acknowledge the support of the \textit{Coordenação de Aperfeiçoamento de Pessoal de Nível Superior} – Brazil (CAPES), through a doctoral scholarship received during the development of this work.

\end{document}